\newtheorem{theorem}{Theorem}[section]
\newtheorem{lemma}[theorem]{Lemma}
\newtheorem*{claim}{Claim}
\theoremstyle{remark}\newtheorem*{remark}{Remark}
\theoremstyle{remark}
\newcommand{\la}{\lambda}
\DeclareMathOperator{\defi}{def}
\DeclareMathOperator{\order}{order}
\DeclareMathOperator{\stab}{Stab}
\begin{document}

\title{Asymptotically tight bounds on subset sums}
\author{Simon Griffiths}

\maketitle

\begin{abstract}

For a subset $A$ of a finite abelian group $G$ we define $\Sigma(A)=\{\sum_{a\in B}a:\,B\subset A\}$.  In the case that $\Sigma(A)$ has trivial stabiliser, one may deduce that the size of $\Sigma(A)$ is at least quadratic in $|A|$;  the bound $|\Sigma(A)|\ge |A|^{2}/64$ has recently been obtained by De Vos, Goddyn, Mohar and \v S\' amal \cite{dvgms}.  We improve this bound to the asymptotically best possible result $|\Sigma(A)|\ge (1/4-o(1))|A|^{2}$.  

We also study a related problem in which $A$ is any subset of $\mathbb{Z}_{n}$ with all elements of $A$ coprime to $n$; it has recently been shown, by Vu \cite{Vu}, that if such a set $A$ has the property $\Sigma(A)\neq \mathbb{Z}_{n}$ then $|A|=O(\sqrt{n})$.  This bound was improved to $|A|\le 8\sqrt{n}$ by De Vos, Goddyn, Mohar and \v S\' amal \cite{dvgms}, we further improve the bound to the asymptotically best possible result $|A|\le (2+o(1))\sqrt{n}$.

\end{abstract}

\section{Introduction}

For a subset $A$ of a finite abelian group $G$ we define, \begin{equation*} \Sigma(A)=\Big\{\sum_{a\in B}a:\, B\subset A\Big\}\end{equation*} the set of all elements which may be expressed as a sum of elements of $A$ (with repetition not allowed).  For a subset $S\subset G$ the stabiliser $\stab(S)$ of $S$ is the set of elements $g\in G$ such that $S+g=S$; the stabiliser is a subgroup of $G$.  We say that $S$ has trivial stabiliser if $\stab(S)=\{0\}$.  A recent result of De Vos, Goddyn, Mohar and \v S\' amal \cite{dvgms} shows that if $\Sigma(A)$ has trivial stabiliser then its size is at least quadratic in the size of $A$.

\begin{theorem}\label{dvgms} \cite{dvgms} Let $A\subset G\setminus \{0\}$, and suppose that $\Sigma (A)$ has trivial stabiliser, then $|\Sigma(A)|\ge |A|^{2}/64$\end{theorem}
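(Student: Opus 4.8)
The plan is to use Kneser's addition theorem (in the form of its defect version) as the sole tool, so the first step is to record why the hypothesis on $\Sigma(A)$ propagates to all subsets. Fix any ordering $A=\{a_{1},\dots,a_{k}\}$, put $A_{j}=\{a_{1},\dots,a_{j}\}$, and observe $\Sigma(A_{j})=\Sigma(A_{j-1})\cup\bigl(\Sigma(A_{j-1})+a_{j}\bigr)$; if $h$ stabilises $\Sigma(A_{j-1})$ it stabilises the right-hand side, so the subgroups $\stab(\Sigma(A_{j}))$ increase with $j$ and are all contained in $\stab(\Sigma(A))=\{0\}$. Since the ordering was arbitrary, $\Sigma(A')$ has trivial stabiliser for every $A'\subseteq A$. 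This is exactly what will let me cut $A$ into pieces and recurse.

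The model situation, which also shows the constant should be essentially $1/4$, is when some dilate $\la A$ lies in a short two-sided neighbourhood $\{-m,\dots,-1,1,\dots,m'\}$ of $0$ inside $\mathbb{Z}_{n}$, short enough that every subset sum stays inside a proper interval and hence behaves like an ordinary integer. Splitting $A$ into the part $A^{+}$ mapping to positive residues and the part $A^{-}$ mapping to negative ones, one has $\Sigma(A)=\Sigma(A^{+})+\Sigma(A^{-})$ with the two pieces on opposite sides of $0$, so Cauchy--Davenport gives $|\Sigma(A)|\ge|\Sigma(A^{+})|+|\Sigma(A^{-})|-1$. The classical bound that $j$ distinct positive integers have at least $\binom{j+1}{2}+1$ distinct subset sums applies to each piece, and convexity over $|A^{+}|+|A^{-}|=k$ then yields $|\Sigma(A)|\ge k^{2}/4$.

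For a general finite abelian $G$ I would run an induction on $|G|$. Pick a proper subgroup $H$, let $\pi\colon G\to G/H$, and use the fibrewise bound: for each $t\in\Sigma(\pi(A))$ the fibre $\Sigma(A)\cap\pi^{-1}(t)$ contains a translate of $\Sigma(A\cap H)$, whence $|\Sigma(A)|\ge|\Sigma(\pi(A\setminus H))|\cdot|\Sigma(A\cap H)|$; by the first paragraph both factors still have trivial stabiliser, in $H$ and in $G/H$ respectively, after enlarging $H$ so that the stabiliser of the image set is absorbed, and both now refer to proper subgroups or quotients of $G$, so the induction hypothesis applies to each — once the projection is made injective on $A\setminus H$, which one arranges by feeding a concentrated coset of $A$ back into the argument. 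The obstacle is that this product is useless when the split $|A\cap H|+|A\setminus H|=k$ is badly unbalanced, and for some groups — cyclic of prime order being the extreme case — no subgroup gives a balanced split at all. These must be handled head-on by a Freiman-type rectification: when $\Sigma(A)\ne\mathbb{Z}_{p}$ one dilates $A$ into a short interval and falls back on the model situation of the previous paragraph. I expect the real difficulty to be organising the induction so that one can always choose profitably among the multiplicative split, the rectification, and a direct one-element-at-a-time Kneser estimate, and so that the resulting multiplicative constant stabilises at $1/64$ rather than deteriorating at each level of the recursion.
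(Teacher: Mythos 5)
Note first that the paper cites this bound from \cite{dvgms} and does not prove it --- it is used as a black box, notably as the base case $k=9$ of Theorem \ref{main}, inside Lemma \ref{rc}, and repeatedly in Section \ref{pom}. The actual argument in \cite{dvgms} (a piece of which is sketched here in the proof of Lemma \ref{slarge}) grows a set $B\subset A$ one element at a time, sets $S=\Sigma(B)$, and uses the subadditivity of $\la_{S}$ together with Kneser's theorem to find $c\in A\setminus B$ with $\la_{S}(c)$ large. Your multiplicative fibre decomposition combined with a rectification step over $\mathbb{Z}_p$ is a genuinely different route.

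As a proof, however, the proposal has substantive gaps rather than omitted calculations. The rectification step is the most serious: you assert without argument that $\Sigma(A)\ne\mathbb{Z}_p$ forces some dilate of $A$ into a short interval, but this is not an off-the-shelf fact and in the cyclic prime case it is at least as hard as what you are trying to prove --- the Erd\H os--Heilbronn and Olson results obtain the quadratic lower bound on $|\Sigma(A)|$ in $\mathbb{Z}_p$ directly, not via a structural rectification, and Freiman-type rectification is usually tied to small doubling of $A$, not to $\Sigma(A)$ being a proper subset. The multiplicative inequality $|\Sigma(A)|\ge|\Sigma(\pi(A\setminus H))|\cdot|\Sigma(A\cap H)|$ is correct, but to recurse you need $\pi$ to be injective on $A\setminus H$ and $\Sigma(\pi(A\setminus H))$ to have trivial stabiliser in $G/H$; neither follows from triviality of $\stab(\Sigma(A))$ --- for instance $\stab(\Sigma(A)+H)$ can properly contain $H$, as with $\Sigma(A)=\{0,1,2\}$ and $H=\{0,2\}$ in $\mathbb{Z}_4$ --- and your proposed fix, enlarging $H$, then threatens both injectivity and the balance of the split, which you have not controlled. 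Finally you do not track the constant: the multiplicative recursion improves on balanced splits but degenerates on unbalanced ones, and you have not shown that the cases forcing an unbalanced split or a descent to $\mathbb{Z}_p$ can be covered without the constant leaking at each level. The first paragraph (stabiliser inheritance) and the $\mathbb{Z}_n$-interval model paragraph are sound and useful; the third paragraph, by your own admission, is a catalogue of the real difficulties rather than a resolution of them, so the strategy does not yet constitute a proof.
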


It is noted in \cite{dvgms} that the above theorem can be proved with $1/64$ replaced by $1/48-o(1)$; where $o(1)$ denotes a function which converges to $0$ as $|A|\to \infty$.  Our aim is to improve this further by showing the result holds with the constant replaced by $1/4-o(1)$.  This result is asymptotically best possible as seen by considering $A=\{-n,-(n-1),...,n-1,n\}\subset \mathbb{Z}_{N}$, with $N$ large.

\begin{theorem}\label{ismain} Let $A\subset G\setminus \{0\}$, and suppose $\Sigma (A)$ has trivial stabiliser, then $|\Sigma(A)|\ge (1/4-o(1))|A|^{2}$\end{theorem}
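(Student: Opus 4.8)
The plan is to reduce Theorem~\ref{ismain} to a statement about finite sets of integers, where the constant $1/4$ emerges directly from the extremal configuration $\{\pm1,\dots,\pm n\}$.

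\emph{Reductions.} Any element stabilising $\Sigma(A)$ lies in $\Sigma(A)\subseteq\langle A\rangle$, so $\stab(\Sigma(A))\subseteq\langle A\rangle$ and we may assume $G=\langle A\rangle$. Then $\Sigma(A)\ne G$, since otherwise $\stab(\Sigma(A))=G\ne\{0\}$; fix $g\in G\setminus\Sigma(A)$.

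\emph{An integer lemma.} I would first prove: \emph{if $S$ is a finite set of positive integers, then $|\Sigma(S)|\ge\binom{|S|+1}{2}+1$}, with equality for $S=\{1,\dots,|S|\}$. Induct on $|S|$: put $x=\max S$, $P=\sum(S\setminus\{x\})$, and $\Sigma(S)=\Sigma(S\setminus\{x\})\cup(\Sigma(S\setminus\{x\})+x)$. If $x>P$ the two pieces are disjoint and $|\Sigma(S)|=2|\Sigma(S\setminus\{x\})|$, already beating the bound. If $x\le P$, then $\Sigma(S\setminus\{x\})$ contains the $|S|$ distinct values $P$ and $P-y$ ($y\in S\setminus\{x\}$), all lying in $(P-x,P]$, so translating them by $x$ yields $|S|$ new elements above $P=\max\Sigma(S\setminus\{x\})$; with the inductive bound for $S\setminus\{x\}$ this closes the induction. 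Splitting an arbitrary $S\subseteq\mathbb Z\setminus\{0\}$ into its positive part $S^{+}$ and negative part $S^{-}$, and using $|X+Y|\ge|X|+|Y|-1$ for finite sets of integers, one obtains
\[ |\Sigma(S)|\ \ge\ \binom{|S^{+}|+1}{2}+\binom{|S^{-}|+1}{2}+1\ \ge\ \frac{|S|^{2}}{4}+\frac{|S|}{2}+1 , \]
the last step by convexity, with equality forcing $|S^{+}|=|S^{-}|$ and both parts initial segments of $\mathbb Z^{+}$ — precisely $S=\{\pm1,\dots,\pm n\}$.

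\emph{Reduction to the integer case, and the main obstacle.} The heart of the proof is to extract from $A$ a set $\widetilde A\subseteq\mathbb Z\setminus\{0\}$ with $|\widetilde A|=(1-o(1))|A|$ and $|\Sigma(A)|\ge|\Sigma_{\mathbb Z}(\widetilde A)|$; the integer lemma then gives $|\Sigma(A)|\ge(1/4-o(1))|A|^{2}$. This is transparent when $G=\mathbb Z_{n}$ is cyclic: writing $\|a\|=\min(a,n-a)$, each value of $\|\cdot\|$ occurs for at most two residues, so $\sum_{a\in A}\|a\|\ge(1/4-o(1))|A|^{2}$ automatically; and if $\sum_{a}\|a\|<n$ we may lift each $a$ to the integer $\widetilde a$ with $|\widetilde a|=\|a\|$, observe that $\Sigma_{\mathbb Z}(\widetilde A)$ lies in an interval of length $<n$ and hence reduces injectively modulo $n$, and conclude $|\Sigma(A)|=|\Sigma_{\mathbb Z}(\widetilde A)|$. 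Two things remain, and I expect them to contain essentially all of the difficulty. (i) \emph{Reducing a general finite abelian group to the cyclic case:} we may assume $|\Sigma(A)|\le|A|^{2}/4$ (else there is nothing to prove), so $\Sigma(A)=\sum_{a\in A}\{0,a\}$ is vastly smaller than the generic value $2^{|A|}$; a quantitative inverse/stability form of the structural input behind Theorem~\ref{dvgms}, combined with $\stab(\Sigma(A))=\{0\}$, should force all but $o(|A|)$ elements of $A$ into a common cyclic subgroup of $G$, which we retain while discarding the exceptions. (ii) \emph{The wrap-around regime $\sum_{a}\|a\|\ge n$ inside $\mathbb Z_{n}$:} here the naive lift wraps around, so one restricts to a sub-sum of $\ell^{1}$-size just below $n$ (at the cost of another $o(|A|)$ elements), or lifts into several bands and bounds the fibres of the reduction map, in each case using $g\notin\Sigma(A)$ and the triviality of $\stab(\Sigma(A))$ to rule out the extra collapse that cheap wrapping would otherwise cause. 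The quantitative step~(i) — controlling the number of non-collinear exceptions, and thereby forcing the cyclic, indeed arithmetic-progression, structure — is in my view the genuine obstacle; the integer lemma and the lifting bookkeeping are routine by comparison, and the $o(1)$ in the statement is precisely the price of the discarded exceptions and the wrap-around correction.
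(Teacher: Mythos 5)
Your integer lemma is correct and cleanly proved, and the special case of $\mathbb{Z}_n$ with $\sum_{a\in A}\|a\|<n$ is handled correctly by the lifting argument. But the two steps you flag as ``containing essentially all of the difficulty'' are genuine gaps, and the routes you sketch for them would not close.

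For step~(i), you propose that a quantitative inverse or stability form of Theorem~\ref{dvgms} should force all but $o(|A|)$ elements of $A$ into a common cyclic subgroup. No such inverse theorem is known; it would be a strictly stronger statement than the direct bound being proved here, and there is no indication it follows from the $|A|^2/64$ bound. The paper does not take this route at all: it proves a stronger direct bound (Theorem~\ref{main}, the $A\cap(-A)=\emptyset$ case) by an iterative construction inside $G$ itself, building up $B\subset A$ one element at a time so that $|\Sigma(B)|$ grows by almost $|A\setminus B|$ at each step. The engine is a sharpened $\lambda$-increment lemma (Lemma~\ref{thegoodincrease}, refining the $|C|/8$ bound of \cite{dvgms} to $(1-2^{1-k})|C|$) combined with Kneser's theorem and a careful analysis of how $S=\Sigma(B)$ intersects the cosets of $H=\langle C\rangle$. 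Theorem~\ref{ismain} then follows by splitting $A$ into two halves each avoiding its own negatives and adding the two contributions via Kneser. Nothing like a reduction to the cyclic/integer case appears.

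For step~(ii), the claim that one can ``restrict to a sub-sum of $\ell^1$-size just below $n$ at the cost of another $o(|A|)$ elements'' is false. Take $A=\{\tfrac n2\pm 1,\tfrac n2\pm2,\dots,\tfrac n2\pm k\}$ (reduced mod $n$) in $\mathbb{Z}_n$ with $k\approx\sqrt n/10$, so $|A|=2k$; every element has $\|a\|\approx n/2$, hence any three elements already have $\ell^1$-sum exceeding $n$, and you would have to discard all but two elements of $A$. This $A$ satisfies the theorem (here $|\Sigma(A)|\approx 2k^2 \ge |A|^2/4$), but your lifting machinery yields nothing for it. A further fix would be to recenter the lift around $n/2$ rather than $0$, but that amounts to rebuilding the argument around arbitrary affine copies of $\{\pm1,\dots,\pm k\}$, and it still does not touch the general non-cyclic $G$. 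In short: the integer lemma and the bookkeeping are the easy part, and the parts you defer are precisely where the paper has to work hard, via a different (constructive, Kneser-based) method.
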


We follow \cite{dvgms} in deducing related results for the case $A$ has non-trivial stabiliser.  

\begin{theorem}\label{wstab} Let $A\subset G$, and let $H$ be the stabiliser of $\Sigma (A)$ then, \begin{equation*} |\Sigma(A)|\ge (1/4-o(1))|A\setminus H|^{2}\end{equation*} where $o(1)$ denotes a function which converges to $0$ as $|A\setminus H|/|H|\to \infty$.\end{theorem}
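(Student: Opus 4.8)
The plan, following \cite{dvgms}, is to pass to the quotient $G/H$ and reduce to a multiset version of Theorem \ref{ismain}. Let $\pi\colon G\to G/H$ be the quotient map. Since $H=\stab(\Sigma(A))$, the set $\Sigma(A)$ is a union of $H$-cosets, so $\Sigma(A)=\pi^{-1}\big(\pi(\Sigma(A))\big)$ and hence $|\Sigma(A)|=|H|\cdot|\pi(\Sigma(A))|$. The first thing I would record is that $\pi(\Sigma(A))$ has trivial stabiliser in $G/H$: if $\bar g+\pi(\Sigma(A))=\pi(\Sigma(A))$, then lifting $\bar g$ to any $g\in G$ and using the $H$-invariance of $\Sigma(A)$ gives $g+\Sigma(A)=\Sigma(A)$, so $g\in H$ and $\bar g=0$.

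Next I would identify $\pi(\Sigma(A))$ explicitly. Splitting each $B\subseteq A$ as $(B\cap H)\cup(B\setminus H)$ shows $\Sigma(A)=\Sigma(A\cap H)+\Sigma(A\setminus H)$; as $\Sigma(A\cap H)\subseteq H$ and $\Sigma(A)$ is $H$-invariant, this gives $\Sigma(A)=H+\Sigma(A\setminus H)$, and therefore $\pi(\Sigma(A))=\Sigma(\mu)$, where $\mu$ denotes the \emph{multiset} $\{\pi(b):b\in A\setminus H\}$ on $(G/H)\setminus\{0\}$ and $\Sigma(\mu)=\{\sum_\gamma j_\gamma\gamma:\,0\le j_\gamma\le m_\gamma\}$ is its set of submultiset sums. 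The total size of $\mu$ is $|A\setminus H|$; each multiplicity $m_\gamma=|A\cap\pi^{-1}(\gamma)|$ is at most $|\pi^{-1}(\gamma)|=|H|$; and $m_\gamma<\order(\gamma)$ for every $\gamma$ in the support, since $m_\gamma\ge\order(\gamma)$ would force $\{0,\gamma,\dots,m_\gamma\gamma\}=\langle\gamma\rangle$ and hence make $\Sigma(\mu)$ invariant under the nontrivial subgroup $\langle\gamma\rangle$, contradicting the previous paragraph.

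Thus the statement reduces to the multiset analogue of Theorem \ref{ismain}: if $\mu$ is a multiset on $G\setminus\{0\}$ all of whose multiplicities are at most $t$, and $\Sigma(\mu)$ has trivial stabiliser, then $|\Sigma(\mu)|\ge\big(\tfrac1{4t}-o(1)\big)|\mu|^2$ with $o(1)\to0$ as $|\mu|/t\to\infty$. Granting this with $t=|H|$ and $\mu$ as above, we get $|\Sigma(A)|=|H|\,|\Sigma(\mu)|\ge|H|\big(\tfrac1{4|H|}-o(1)\big)|A\setminus H|^2=\big(\tfrac14-o(1)\big)|A\setminus H|^2$, and the error is controlled by $|\mu|/t=|A\setminus H|/|H|\to\infty$, exactly the regime in the statement. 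Since $t=1$ recovers Theorem \ref{ismain} itself, the cleanest organisation is to prove this multiset form directly in the preceding section and then run the reduction above verbatim.

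I expect the real work to be precisely this multiset statement, proved by rerunning the argument behind Theorem \ref{ismain} and checking that it tolerates repetitions. Heuristically, a multiset of size $n$ with multiplicities $\le t$ behaves like a genuine set of about $n/t$ distinct elements, each carrying an arithmetic‑progression ``thickening'' of length about $t$; one expects the subset sums of the underlying $\approx n/t$ distinct elements to span $\approx(n/t)^2/4$ elements by Theorem \ref{ismain}, with the thickenings inflating this by a further factor of about $t$, giving $\approx t\cdot(n/t)^2/4=n^2/(4t)$. Making this precise while keeping the trivial‑stabiliser hypothesis in play throughout is the delicate point; the bound $m_\gamma<\order(\gamma)$ and the inheritance of trivial stabiliser under $\pi$ recorded above are exactly what is needed to transport the stabiliser hypothesis through the quotient.
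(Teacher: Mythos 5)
Your reduction is sound: passing to $G/H$, observing that $\pi(\Sigma(A))$ has trivial stabiliser there and equals the set of submultiset sums of the image multiset $\mu$ on $(G/H)\setminus\{0\}$ with multiplicities at most $|H|$, and deducing the theorem from a multiset analogue of Theorem~\ref{ismain}, is exactly the right framing (and indeed the $t=1$ case of your multiset statement is Theorem~\ref{ismain} itself). But you have not proved the multiset lemma, and it is where all the work lies. The ``thickening'' heuristic in your final paragraph is an expectation, not an argument: it gives no mechanism for handling multisets whose multiplicities vary wildly, nor for keeping a quantitative version of Theorem~\ref{ismain} applicable once you pass to ``the underlying $\approx n/t$ distinct elements,'' which need not be nearly as many as $n/t$.

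The paper fills exactly this gap by a layering decomposition rather than any direct ``distinct elements plus arithmetic thickening'' picture. Working in $G/H$, it sets $A_i=\{Q\in G/H:|A\cap Q|\ge i\}$ and $A'_i=A_i\setminus\{H\}$ for $i=1,\ldots,h=|H|$ (these are precisely the superlevel sets of your multiplicity function $m_\gamma$), notes that $\sum_i|A'_i|=|A\setminus H|$, and that $\Sigma(A)$ is the union of the $H$-cosets lying in $\Sigma(A'_1)+\cdots+\Sigma(A'_h)$, so $|\Sigma(A)|=h\,|\Sigma(A'_1)+\cdots+\Sigma(A'_h)|$. The sets $A'_i$ are nested and decreasing, so one applies the set version (Theorem~\ref{ismain} in the sharpened form $|\Sigma(A'_i)|\ge 1+f'(|A'_i|)|A'_i|^2$) only to the layers with $|A'_i|\ge\lfloor\sqrt{n}\rfloor$, where $n=|A\setminus H|/|H|$; the small layers are discarded at a cost of at most $h\lfloor\sqrt{n}\rfloor$ out of $|A\setminus H|$, which is a $1-o(1)$ fraction. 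Kneser's theorem (legitimate because, as you correctly noted, the trivial-stabiliser hypothesis transfers to the quotient) makes the sizes $|\Sigma(A'_i)|$ essentially additive across the sumset, and then Cauchy--Schwarz converts $\sum_{i\le j}|A'_i|^2$ into $\bigl(\sum_{i\le j}|A'_i|\bigr)^2/j\ge\bigl(\sum_{i\le j}|A'_i|\bigr)^2/h$, which produces the factor $|A\setminus H|^2$ at the right order with a $1/4-o(1)$ constant. This layering-plus-Kneser-plus-Cauchy--Schwarz step is the substance you are missing; the setup you give is compatible with it, but without it there is no proof.
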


\begin{remark} As the $o(1)$ term of the above statement converges to zero as $|A\setminus H|/|H|\to \infty$, rather than as $|A\setminus H|\to \infty$, our result is only an improvement on the result of \cite{dvgms} in the case that $|A\setminus H|$ is large relative to $|H|$.\end{remark}

Erd\H os and Heilbronn \cite{EH} proved that if $A\subset \mathbb{Z}_{p}$, the integers modulo $p$ (with $p$ prime), and $|A|\ge 3\sqrt{6p}\,$ then $\Sigma (A)=\mathbb{Z}_{p}\,$ -- the connection of this result to the current discussion is that this result is proved by giving a quadratic lower bound on $|\Sigma (A)|$ for $A\subset \mathbb{Z}_{p}$.  They conjectured that the constant $3\sqrt{6}$ of their theorem could be replaced by $2$, this was proved by Olson \cite{O} and further sharpened by Dias da Silva and Hamidoune \cite{DdSH}.  To prove a similar result in $\mathbb{Z}_{n}$, for $n$ composite, one must put extra conditions on the set $A$.  The following theorem of Vu \cite{Vu} shows one way in which this can be done is to demand that $A$ is contained in $\mathbb{Z}_{n}^{*}$, the elements coprime to $n$.

\begin{theorem} (Vu \cite{Vu}) There is a constant $c$ such that every subset $A\subset \mathbb{Z}_{n}^{*}\subset \mathbb{Z}_{n}$ with $|A|\ge c\sqrt{n}$ satisfies $\Sigma(A)=\mathbb{Z}_{n}$\end{theorem}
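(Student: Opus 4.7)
The plan is to derive the statement from Theorem~\ref{wstab}, exploiting the strong structural constraint that coprimality imposes on any potential stabiliser of $\Sigma(A)$. Suppose for contradiction that $A\subset \mathbb{Z}_n^{\ast}$ satisfies $|A|\ge c\sqrt{n}$ yet $\Sigma(A)\neq \mathbb{Z}_n$, and let $H=\stab(\Sigma(A))$, which is then a proper subgroup of $\mathbb{Z}_n$.

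The only group-theoretic input needed is the observation that $A\cap H=\emptyset$. Every proper subgroup of $\mathbb{Z}_n$ has the form $(n/m)\mathbb{Z}_n$ for some divisor $n/m>1$ of $n$, so every element of such a subgroup is divisible by $n/m$ and therefore shares this non-trivial factor with $n$. Since the elements of $A$ are coprime to $n$, none of them can lie in $H$, and hence $A\setminus H=A$.

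Theorem~\ref{wstab} then produces $|\Sigma(A)|\ge (1/4-o(1))|A|^{2}$, and combined with the trivial bound $|\Sigma(A)|\le n$ this rearranges to $|A|\le (2+o(1))\sqrt{n}$, contradicting $|A|\ge c\sqrt{n}$ for any fixed $c>2$ and $n$ large. I expect the main obstacle to be controlling the $o(1)$ of Theorem~\ref{wstab}, which decays only as $|A|/|H|\to \infty$. When $|H|=o(\sqrt{n})$ this decay is free from the hypothesis $|A|\ge c\sqrt{n}$, but in the delicate regime where $|H|$ is of order $\sqrt{n}$ or larger a separate argument is required; one natural option is to project $A$ down to the small quotient $\mathbb{Z}_n/H$, note that the image avoids $0$, and apply the uniform (but weaker) bound of Theorem~\ref{dvgms} inside this smaller group to control $|\Sigma(A)|/|H|$ directly, which suffices for the unspecified constant $c$ in the present statement.
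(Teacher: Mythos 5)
The statement in question is cited in the paper as a known result of Vu \cite{Vu}; the paper does not reprove it, but instead establishes the sharpened Theorem~\ref{asymptoticolson} (constant $2+o(1)$) by a direct greedy construction in Section~\ref{poao}, building up a subset $B\subset A$ whose subset sums grow rapidly via Lemmas~\ref{ssmall}, \ref{slarge} and~\ref{thegoodincreaseolson}. Your route---observe that coprimality forces $A\cap H=\emptyset$ for every proper subgroup $H$, so $A\setminus H=A$, and then invoke a stabiliser-aware quadratic lower bound on $|\Sigma(A)|$---is correct in spirit and is essentially how the authors of \cite{dvgms} obtained $c=8$ as a corollary of their $|A|^{2}/64$ bound. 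It is a genuinely different approach from the paper's: it buys simplicity and derives the statement almost immediately from earlier theorems, at the cost of being unable to reach the optimal constant $2+o(1)$, which is why the paper abandons this reduction in favour of the bespoke argument in Section~\ref{poao}.

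The one concrete gap is your fallback for the large-$|H|$ regime. Applying Theorem~\ref{dvgms} to the image of $A$ in $\mathbb{Z}_{n}/H$ does not suffice: the projection is far from injective, so when $|H|$ is comparable to $\sqrt{n}$ the image of $A$ can be much smaller than $|A|$, possibly only a handful of cosets, and a quadratic bound in the image size is then vacuous. Coprimality guarantees $A$ avoids the trivial coset but does not prevent $A$ from piling up in a few nontrivial ones. What is actually needed---and what the proof of Theorem~\ref{wstab} does---is the multiplicity layering: set $A_{i}=\{Q\in G/H:|A\cap Q|\ge i\}$, so that $\Sigma(A)$ is a union of $H$-cosets whose image in $\mathbb{Z}_{n}/H$ equals $\Sigma(A_{1})+\cdots+\Sigma(A_{|H|})$, and $\sum_{i}|A_{i}|=|A|$; then apply Kneser together with a quadratic lower bound on each $|\Sigma(A_{i})|$. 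Rerunning that proof with the explicit $1/64$ of Theorem~\ref{dvgms} in place of the asymptotic $1/4-o(1)$ (and being slightly careful with the small additive terms from Kneser) yields a bound of the form $|\Sigma(A)|\ge c'|A\setminus H|^{2}$ for a universal $c'>0$, valid for all $|H|$ simultaneously with no case split; together with $A\setminus H=A$ and $|\Sigma(A)|\le n$ this produces the unspecified constant $c$ directly.
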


The constant obtained in the original proof of the theorem is quite large.  As a corollary of their main theorem De Vos, Goddyn, Mohar and \v S\' amal \cite{dvgms} improved the constant to $c=8$.  We improve this further, by replacing the constant by $2+o(1)$.  

\begin{theorem}\label{asymptoticolson} Let $A\subset \mathbb{Z}_{n}^{*}\subset \mathbb{Z}_{n}$ be such that $\Sigma(A)\neq\mathbb{Z}_{n}$, then $|A|\le (2+o(1))\sqrt{n}$\end{theorem}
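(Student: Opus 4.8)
The plan is to deduce Theorem~\ref{asymptoticolson} from Theorems~\ref{ismain} and~\ref{wstab}. Let $A\subset\mathbb{Z}_n^*$ with $\Sigma(A)\neq\mathbb{Z}_n$, so $|\Sigma(A)|\le n-1$, and put $H=\stab(\Sigma(A))$. First I would record that $A\cap H=\emptyset$: since $\Sigma(A)\ni 0$ and $\Sigma(A)\neq\mathbb{Z}_n$, the subgroup $H$ is proper, hence $H=\langle d\rangle$ with $d:=n/|H|>1$; every element of $\langle d\rangle$ has an integer representative divisible by $d$ and so shares the factor $d$ with $n$, and therefore cannot be coprime to $n$, while also $0\notin A$. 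Thus $|A\setminus H|=|A|$. If $H=\{0\}$ we are done at once: Theorem~\ref{ismain} gives $n-1\ge|\Sigma(A)|\ge(1/4-o(1))|A|^2$, whence $|A|\le(2+o(1))\sqrt n$.

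Assume now $H\neq\{0\}$. Applying Theorem~\ref{wstab} with $|A\setminus H|=|A|$ yields $n>|\Sigma(A)|\ge(1/4-o(1))|A|^2$, with error term controlled as soon as $|A|/|H|$ is large; this already gives $|A|\le(2+o(1))\sqrt n$ whenever $|H|$ is small relative to $|A|$. It remains to treat the regime where $|H|$ is comparable to, or larger than, $|A|$, equivalently where $d=n/|H|$ is small. Here I would pass to the quotient $q\colon\mathbb{Z}_n\to\mathbb{Z}_n/H\cong\mathbb{Z}_d$: since $d\mid n$, coprimality to $n$ forces coprimality to $d$, so $q(A)\subseteq\mathbb{Z}_d^*$ and every element of the image multiset generates $\mathbb{Z}_d$. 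As $\Sigma(A)$ is a union of $H$-cosets, $\Sigma(A)=q^{-1}(S)$ where $S$ is the set of subset sums (with multiplicity) of $q(A)$ and $\stab_{\mathbb{Z}_d}(S)=\{0\}$. Writing $S=\sum_i\{0,\bar a_i,\dots,m_i\bar a_i\}$ over the distinct values $\bar a_i$ with multiplicities $m_i$ (so $\sum_i m_i=|A|$), each summand is a progression of common difference a generator of $\mathbb{Z}_d$, of size $m_i+1$ unless $S=\mathbb{Z}_d$ already; Kneser's theorem (using $\stab(S)=\{0\}$) then gives $|S|\ge\sum_i(m_i+1)-(j-1)=|A|+1$. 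Hence $n>|\Sigma(A)|=|H|\,|S|\ge|H|(|A|+1)$, so $|A|\,|H|<n$, and in particular if $|H|\ge|A|/4$ then $|\Sigma(A)|>|A|^2/4$, giving $|A|<2\sqrt n$ again.

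Combining these, the only unsettled case is the intermediate one where $|A|/|H|$ exceeds $4$ but is too small for the $o(1)$ of Theorem~\ref{wstab} to be negligible, and this is where I expect the real work to lie. The additional structure I would exploit is that $H=\stab(\Sigma(A))$ forces every $H$-coset meeting $\Sigma(A)$ to be wholly contained in it: for each attainable $\bar v\in S$, the subset sums of $A$ projecting to $\bar v$ must sweep out an entire coset of $H$. When $\bar v$ admits an essentially unique representation $\sum_i c_i\bar a_i$ with $0\le c_i\le m_i$ this is impossible unless $|H|=1$; this rules out $q(A)$ having a single distinct value and, pushed further, forces the image multiset to contain many zero-sum sub-multisets, hence to carry rich additive structure. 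Quantifying this — for instance by also applying Theorem~\ref{ismain} to the set $\{\bar a_i\}$ of distinct values inside the smaller group $\mathbb{Z}_d$, or by applying the present theorem inductively — should upgrade $|S|\ge|A|+1$ to a bound that closes the intermediate range and delivers $|A|\le(2+o(1))\sqrt n$ in every case. The main obstacle is thus not the two easy extremes but this careful coset-covering analysis in the middle regime.
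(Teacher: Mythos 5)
Your reduction to Theorems~\ref{ismain} and~\ref{wstab} handles the two easy extremes correctly: the case $H=\{0\}$ via Theorem~\ref{ismain}, and the case of $|H|$ large via the Kneser bound $|S|\ge|A|+1$ in $\mathbb{Z}_d$, which gives $|A|<d=n/|H|$. The observations that $A\cap H=\emptyset$, that $q(A)\subset\mathbb{Z}_d^*$ with every image element generating $\mathbb{Z}_d$, and that $\stab_{\mathbb{Z}_d}(S)$ is trivial are all correct. However, as you yourself acknowledge, the intermediate regime is not closed, and the gap is substantial. Concretely, suppose for contradiction that $|A|>(2+\ep)\sqrt n$. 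Then $|A|<d$ forces $|H|<\sqrt n/(2+\ep)$, hence $|A|/|H|>(2+\ep)^2>4$, but nothing forces $|A|/|H|\to\infty$: it can sit at any fixed value greater than $4$. For such bounded ratios the $o(1)$ in Theorem~\ref{wstab} is bounded away from zero --- indeed that $o(1)$ is governed by the function $f_2$, hence by $\alpha_k$, whose convergence to $1/2$ is triggered only at the doubly-exponential thresholds $n_k=2^{k^2}$, so for moderate $|A|/|H|$ the bound from Theorem~\ref{wstab} can be as weak as roughly $|A|^2/128$. Meanwhile the companion bound $|A||H|<n$ only yields $|A|<2\sqrt n$ when $|H|\ge|A|/4$, which is precisely the edge of the problematic range. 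Your proposed repairs (applying Theorem~\ref{ismain} to the $j$ distinct values $\bar a_i$ in $\mathbb{Z}_d$, or an inductive argument from ``rich additive structure'') are not carried out, and the first is genuinely weak since $j$ can be as small as about $|A|d/n$, which need not grow.

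The deeper issue is that Theorems~\ref{ismain} and~\ref{wstab} are statements about arbitrary finite abelian groups and cannot see the coprimality hypothesis, whereas the strength of Theorem~\ref{asymptoticolson} rests exactly on the fact that every element of $A$ generates $\mathbb{Z}_n$. The paper therefore does not go through the quotient $\mathbb{Z}_n/H$ at all. Instead it proves a purpose-built analogue of Lemma~\ref{thegoodincrease}, namely Lemma~\ref{thegoodincreaseolson}, whose hypothesis ``$H=\langle\{c\}\rangle$ for each $c\in C$'' is precisely the coprimality condition; it then partitions $A=A_1\cup A_2$ with $A_i\cap(-A_i)=\emptyset$ and, via the three-stage greedy argument of Lemma~\ref{asolson} (building $B\subset A_i$ one element at a time and using Lemmas~\ref{ssmall}, \ref{slarge}, and \ref{thegoodincreaseolson} in turn to secure a large increment $\la(c)$ at each step), shows directly that $|\Sigma(A_i)|>n/2$ for $i=1,2$, whence $\Sigma(A)=\mathbb{Z}_n$. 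That is a genuinely different, and in this setting sharper, route.
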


\begin{remark} This result is asymptotically best possible, consider for example the case $n=p^{2}$, where $p$ is a large prime, and consider $A=\{-(p-1),...,-1,1,...,p-1\}$.  All elements of $A$ are coprime to $n$, and the element $p(p-1)/2+1$ is not in $\Sigma(A)$ so that $\Sigma(A)\neq \mathbb{Z}_{n}$, while $|A|=2p-2=(2-o(1))\sqrt{n}$.\end{remark}

The results Theorem \ref{ismain} and Theorem \ref{wstab} will be deduced from Theorem \ref{main} which we state below.  Theorem \ref{main} will state that if $A\subset G\setminus \{0\}$ is such that $\Sigma(A)$ has trivial stabiliser, and has the extra property that $A\cap (-A)=\phi$ then $\Sigma(A)\ge (1/2-o(1))|A|^{2}$.  We now state this result formally.

Let $n_{9}=1$, and for $k\ge 10$ let $n_{k}=2^{k^{2}}$.  We define also a sequence of real numbers $\alpha_{k}$ as follows: $\alpha_{9}=1/64$ and for $k\ge 10$ we define\begin{equation*} \alpha_{k}=\min\Big\{\,\frac{6}{5}\alpha_{k-1}\, ,\frac{1}{2}-\frac{1}{2^{k-1}}\Big\}\end{equation*} It is clear that the sequence $\alpha_{k}$ is increasing and converges to $1/2$ as $k\to \infty$.

\begin{theorem}\label{main} Let $A\subset G\setminus\{0\}$ and let $k\ge 9$.  Suppose that $A\cap (-A)=\phi$, that $\Sigma(A)$ has trivial stabiliser and that $|A|\ge n_{k}$, then $|\Sigma(A)|\ge \alpha_{k}|A|^{2}$\end{theorem}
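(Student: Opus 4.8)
The plan is to argue by induction on $k$, with the base case $k=9$ being precisely Theorem \ref{dvgms} (restated: since $\alpha_9 = 1/64$ and $n_9 = 1$, the hypothesis $|A| \ge n_9$ is vacuous and the bound $|\Sigma(A)| \ge |A|^2/64$ is exactly what \cite{dvgms} gives). So assume $k \ge 10$ and that the statement holds for $k-1$; let $A \subset G \setminus \{0\}$ with $A \cap (-A) = \phi$, $\Sigma(A)$ of trivial stabiliser, and $|A| \ge n_k = 2^{k^2}$. We want $|\Sigma(A)| \ge \alpha_k |A|^2$, where $\alpha_k = \min\{\tfrac{6}{5}\alpha_{k-1},\, \tfrac12 - 2^{-(k-1)}\}$. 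The proof should split into two regimes according to how $\Sigma(A)$ sits inside $G$: roughly, either $|\Sigma(A)|$ is already a constant fraction of $|G|$ (or of the order of some large-index subgroup), in which case we can win directly, or $\Sigma(A)$ is ``small'' relative to $G$, in which case we can pass to a subset $A'$ of $A$ whose relevant parameters put us in a position to apply the inductive hypothesis for $k-1$.

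First I would set up the small-progression / large-progression dichotomy. Partition or filter $A$ so as to find a large piece $A'$ living in (a coset structure related to) a subgroup $H'$ on which $\Sigma(A')$ still has trivial stabiliser and $A' \cap (-A') = \phi$, but with $|A'| \ge n_{k-1}$. The key point is that $n_{k-1} = 2^{(k-1)^2}$ and $n_k = 2^{k^2}$, so $n_k/n_{k-1} = 2^{2k-1}$ is enormous; this slack lets us afford to throw away all but a $2^{-(2k-1)}$ fraction of $A$ and still land above the threshold for the inductive step. Applying the induction gives $|\Sigma(A')| \ge \alpha_{k-1}|A'|^2$, and the arithmetic to check is that if we can guarantee $|A'| \ge \sqrt{5/6}\,|A|$ (say) together with $\Sigma(A) \supseteq \Sigma(A')$, then $|\Sigma(A)| \ge \alpha_{k-1} \cdot \tfrac{5}{6}|A|^2 \cdot(\text{ratio}) $ — one must track the constants so that the gain from $\alpha_{k-1}$ to $\tfrac65 \alpha_{k-1}$ is realised, which is where the factor $6/5$ in the recursion comes from. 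The other branch, where $\Sigma(A)$ is a large fraction of an ambient (sub)group, should be handled by a Cauchy--Davenport / Kneser-type or direct counting argument: if $|\Sigma(A)|$ is at least, say, $(\tfrac12 - 2^{-(k-1)})|A|^2$ outright, we are done, and the constant $\tfrac12 - 2^{-(k-1)}$ is exactly the second term in the min defining $\alpha_k$.

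The main obstacle I anticipate is engineering the passage to $A'$ so that all three hypotheses (trivial stabiliser of $\Sigma(A')$, sum-freeness $A'\cap(-A')=\phi$, and the size lower bound) survive simultaneously, while losing only a controlled multiplicative constant in $|A'|/|A|$ and not destroying the relation $\Sigma(A') \subseteq \Sigma(A)$ needed to transfer the bound back. Trivial-stabiliser is the delicate one: restricting $A$ can enlarge the stabiliser of the sumset, so one likely works modulo the stabiliser $H$ of $\Sigma(A')$ in a potential subgroup and argues that $|A' \setminus H|$ is still large (this is presumably why Theorem \ref{wstab} is phrased with $|A\setminus H|$); here Kneser's theorem on the stabiliser of a sumset, together with the quantitative slack $n_k/n_{k-1}$, should let us bound the loss. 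The remaining work is bookkeeping: verifying that in every case the resulting constant is $\ge \min\{\tfrac65\alpha_{k-1}, \tfrac12 - 2^{-(k-1)}\} = \alpha_k$, and checking the size thresholds line up, namely that $|A| \ge n_k$ forces $|A'| \ge n_{k-1}$ after the permitted reduction. Finally, Theorem \ref{main} yields Theorems \ref{ismain} and \ref{wstab} by the standard device of splitting $A$ into $A^+ \cup A^-$ with $A^+ \cap (-A^+) = \phi$, applying Theorem \ref{main} to the larger half, and letting $k \to \infty$ so that $\alpha_k \to 1/2$ and the doubling loses another factor $2$, landing at $1/4 - o(1)$.
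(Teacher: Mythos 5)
Your base case and outer induction-on-$k$ framework match the paper, and you correctly spot that the two terms in the $\min$ defining $\alpha_k$ correspond to two cases of the argument. But the core mechanism you propose for the induction step has a genuine gap: you suggest passing to a large subset $A'\subset A$ with $|A'|\ge\sqrt{5/6}\,|A|$ (say), applying the inductive hypothesis, and using $\Sigma(A')\subset\Sigma(A)$. That can only ever give $|\Sigma(A)|\ge\alpha_{k-1}|A'|^2\le\alpha_{k-1}|A|^2$ — a bound strictly weaker than the inductive input — so the gain from $\alpha_{k-1}$ to $\tfrac{6}{5}\alpha_{k-1}$ cannot come from ``find a large subset and induct.'' You flag this yourself (``one must track the constants so that the gain ... is realised''), but the step where the factor flips from a loss to a gain is precisely the missing idea.

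What the paper actually does is quite different. The heart of the proof is a greedy, element-by-element construction: starting from $B$ of size $\lfloor n/2^k\rfloor$ and going up to size $\lceil n-n/2^k\rceil$, one adds at each step an element $c\in C=A\setminus B$ with $\la_{\Sigma(B)}(c)\ge\bigl(1-\tfrac{1}{2^{k-1}}\bigr)|C|$, so that $|\Sigma(B)|$ grows by nearly $|C|=n-|B|$ each time. Summing these increments over $t$ from $n/2^k$ to $n-n/2^k$ gives roughly $\tfrac12 n^2$, which is where the second branch $\tfrac12-\tfrac{1}{2^{k-1}}$ of $\alpha_k$ comes from — not from some a priori ``$\Sigma(A)$ is already large'' hypothesis. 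Finding such a $c$ is the hard part: one works with $H=\langle C\rangle$, applies the sharpened increment Lemma~\ref{thegoodincrease} to $S\cap Q$ for a suitable coset $Q$, and classifies cosets of $H$ as sparse or dense. The inductive hypothesis at level $k-1$ is applied, but to $A\cap\bar K$ (where $\bar K$ is a union of cosets arising from the dense-coset structure) and only to produce a \emph{lower bound on} $|H|$; the factor $\tfrac{6}{5}$ then arises from counting dense cosets, namely at least $\tfrac{3}{2}|K|$ dense cosets each contributing at least $\tfrac{4}{5}\alpha_{k-1}n^2/|K|$ elements of $S$, giving $\tfrac{3}{2}\cdot\tfrac{4}{5}=\tfrac{6}{5}$. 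So the improvement is geometric (counting cosets $\times$ intersection size), not a direct subset comparison, and your proposal as written would not close the induction.
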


The layout of the article is as follows.  We shall conclude the introduction by introducing definitions and observations that will be used throughout the article.  In Section \ref{pp} we assume Theorem \ref{main} and deduce Theorems \ref{ismain} and \ref{wstab}.  In Section \ref{prel} we establish preliminary lemmas required for the proof of Theorem \ref{main}, we then prove Theorem \ref{main} in Section \ref{pom}.  We then turn to Theorem \ref{asymptoticolson}, which is proved in Section \ref{poao}.  

Given a set $S\subset G$ and an element $c\in G$ we write $S+c$ for the set $\{s+c:s\in S\}$.  We define $\la_{S}(c)$, or simply $\la(c)$, to be the number of elements in $S+c$ which are not in $S$, ie. $\la(c)=|(S+c)\setminus S|$.  The following properties of $\la$ are elementary.

\noindent (i) $\la(0)=0$ \\ (ii) $\la(-c)=\la(c) \!\qquad\qquad \qquad \text{for all}\,\,c\in G$ \\ (iii)$\la(b+c)\le \la(b)+\la(c) \qquad \text{for all}\, \,b,c\in G$ \qquad (subadditivity of $\la$)\vspace{0.1cm}

For sets $A_{1},...,A_{r}\subset G$, we define their sumset, \begin{equation*}\Sigma_{i=1}^{r}A_{i}=\{a_{1}+...+a_{r}:\, a_{i}\in A_{i}\, \,\text{for }\,i=1,...,r \}\end{equation*} We shall also use the notation $rA$ to denote $\sum_{i=1}^{r}A=\{a_{1}+...+a_{r}:a_{i}\in A \}$.  The key sumset inequality we shall use is Kneser's addition theorem.

\begin{theorem} (Kneser \cite{K}) Let $A_{1},...,A_{r}\subset G$ and let $H$ be the stabiliser of $\sum_{i=1}^{r}A_{i}$, then, \begin{equation*}\big|\sum_{i=1}^{r}A_{i}\big|\ge \sum_{i=1}^{r}|A_{i}|-(r-1)|H|\end{equation*}\end{theorem}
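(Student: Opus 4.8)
The plan is to reconstruct Kneser's proof by two reductions followed by an induction on set size. Write $H=\stab\!\big(\sum_{i=1}^{r}A_{i}\big)$, and note we may assume all $A_{i}$ nonempty (the inequality is trivial otherwise). \emph{Step 1 (reduce to $r=2$).} Induct on $r$, with $r=1$ trivial. Put $B=\sum_{i=2}^{r}A_{i}$ and $K=\stab(B)$; any translation stabilising $B$ stabilises $A_{1}+B=\sum_{i}A_{i}$, so $K\subseteq H$ and $|K|\le|H|$. The case $r=2$ applied to $A_{1},B$ gives $\big|\sum_{i}A_{i}\big|\ge|A_{1}|+|B|-|H|$, and the inductive hypothesis gives $|B|\ge\sum_{i=2}^{r}|A_{i}|-(r-2)|K|\ge\sum_{i=2}^{r}|A_{i}|-(r-2)|H|$; adding these yields the claim. \emph{Step 2 (reduce $r=2$ to trivial stabiliser).} With $H=\stab(A+B)$, project $\pi\colon G\to G/H$. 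Then $\pi(A)+\pi(B)=\pi(A+B)$, and the stabiliser of $\pi(A+B)$ in $G/H$ is trivial, for if $g+H$ stabilises it then $g+A+B+H=A+B+H=A+B$, so $g\in H$. Since $A+B$ is a union of $H$-cosets, $|A+B|=|H|\cdot|\pi(A+B)|$, while $|\pi(A)|\ge|A|/|H|$ and $|\pi(B)|\ge|B|/|H|$; hence the trivial-stabiliser bound $|\pi(A)+\pi(B)|\ge|\pi(A)|+|\pi(B)|-1$ in $G/H$ (established below) gives $|A+B|\ge|A|+|B|-|H|$.

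It remains to show: if $A,B\subseteq G$ are finite nonempty with $\stab(A+B)=\{0\}$, then $|A+B|\ge|A|+|B|-1$. Induct on $|B|$ (letting $A$ and $G$ vary); the case $|B|=1$ is immediate. For $|B|\ge2$ suppose for contradiction $|A+B|\le|A|+|B|-2$. Recall the Dyson $e$-transform: for $g\in G$ put $A_{g}=A\cup(B+g)$ and $B_{g}=B\cap(A-g)$, so that $A_{g}+B_{g}\subseteq A+B$, $A\subseteq A_{g}$, $B_{g}\subseteq B$, and $|A_{g}|+|B_{g}|=|A|+|B|$. Since $\stab(A)\subseteq\stab(A+B)=\{0\}$ and $|B|\ge2$, there is $g\in A-B$ with $B+g\not\subseteq A$ (else $B+g\subseteq A$ for all $g\in A-B$ gives $(B-b_{0})+A\subseteq A$ for a fixed $b_{0}\in B$, hence $B-b_{0}\subseteq\stab(A)=\{0\}$ and $|B|=1$); then $A\subsetneq A_{g}$, so $\emptyset\neq B_{g}\subsetneq B$ and $|B_{g}|<|B|$. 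If $\stab(A_{g}+B_{g})=\{0\}$, the inductive hypothesis gives $|A_{g}+B_{g}|\ge|A_{g}|+|B_{g}|-1=|A|+|B|-1$, which with $A_{g}+B_{g}\subseteq A+B$ contradicts the assumption.

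\textbf{The main obstacle} is the remaining case, in which $\stab(A_{g}+B_{g})$ is nontrivial for \emph{every} admissible $g$. Here periodicity must be exploited: with $K=\stab(A_{g}+B_{g})\neq\{0\}$, the set $A_{g}+B_{g}$ is $K$-periodic and lies in $A+B$, and the aim is to force $A+B$ to be $K$-periodic, contradicting $\stab(A+B)=\{0\}$. This is delicate: one picks $g$ to make $|K|$ maximal, replaces $A_{g},B_{g}$ by their $K$-periodic hulls $A_{g}+K$ and $B_{g}+K$ (whose sumset is unchanged and still has stabiliser $K$), and works in $G/K$, where the inductive hypothesis applies because the relevant sets have strictly fewer elements than $B$; closing the contradiction also requires tracking precisely when an $e$-transform strictly shrinks the sumset. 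This periodicity analysis is the technical heart of Kneser's theorem and the step I expect to need the most care. Once it is done, every branch ends in a contradiction, so the trivial-stabiliser bound holds, and by Steps 1 and 2 the full theorem follows.
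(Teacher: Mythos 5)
The paper cites Kneser's theorem from \cite{K} without proof, so there is no in-paper argument to compare your proposal against; the assessment below is of your reconstruction on its own terms. Steps~1 and~2 are correct and standard: the reduction to two summands via $\stab\big(\sum_{i\ge 2}A_i\big)\subseteq H$, and the reduction to trivial stabiliser by projecting to $G/H$, both go through exactly as you write them. The $e$-transform bookkeeping in Step~3 is also right --- $A_g+B_g\subseteq A+B$, $|A_g|+|B_g|=|A|+|B|$, the existence of $g\in A-B$ with $\emptyset\neq B_g\subsetneq B$ --- and the branch in which $\stab(A_g+B_g)=\{0\}$ closes cleanly by induction on $|B|$.

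The gap is precisely the case you flag yourself: $\stab(A_g+B_g)=K\neq\{0\}$ for every admissible $g$. This is not a loose end to tidy later; it is where essentially all of the content of Kneser's theorem lives, and your sketch does not close it. Two concrete obstructions. First, replacing $A_g,B_g$ by their $K$-periodic hulls $A_g+K,\ B_g+K$ preserves the sumset and its stabiliser, but can strictly increase $|A_g|+|B_g|$, so the invariant $|A_g|+|B_g|=|A|+|B|$ on which your induction leans is lost, and it is not clear the quotient sets in $G/K$ have strictly fewer elements than $B$ as your plan requires. Second, to contradict $\stab(A+B)=\{0\}$ you must promote $K$-periodicity of the \emph{subset} $A_g+B_g\subseteq A+B$ to $K$-periodicity of $A+B$ itself, and nothing in your outline forces this: a set with trivial stabiliser can perfectly well contain a nontrivially periodic subset (e.g.\ $\{0,2\}\subset\{0,1,2\}$ in $\mathbb{Z}/4\mathbb{Z}$). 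Closing this requires a genuinely new idea --- an extremal choice of $g$ together with a saturation argument showing $A+B$ coincides with a union of $K$-cosets --- not more of the same bookkeeping. As written, you have a correct outer scaffold around an unproved core, and since the core is the theorem, the proposal does not yet constitute a proof.
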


Let $B\subset A\subset G$.  We may express $\Sigma(A)$ as $\Sigma(A)= \Sigma(B)+\Sigma(A\setminus B)$.  It is then easily observed that,\begin{equation*} \stab(\Sigma(B))\subset \stab(\Sigma(A))\end{equation*} In particular if $\Sigma(A)$ has trivial stabiliser then so does $\Sigma(B)$.  These observations shall be used throughout.

\section{Proofs of Theorem \ref{ismain} and Theorem \ref{wstab}}\label{pp}

In this section we assume Theorem \ref{main} and prove Theorem \ref{ismain} and Theorem \ref{wstab}.  Let $(n_{k})_{k\ge 9}$ and $(\alpha_{k})_{k\ge 9}$ be the sequences defined above, for a natural number $n$ let $k(n)$ be the largest integer $k$ such that $n\ge 2n_{k}$.  We define, \begin{equation*} f(n)=\alpha_{k(n)}/2-1/n^{2}\end{equation*} It is clear that $f(n)$ satisfies $f(n)\to 1/4$ as $n\to \infty$, ie. $f$ is of the form $1/4-o(1)$.

\begin{proof}[Proof of Theorem \ref{ismain}] Let $A\subset G\setminus \{0\}$ be such that $\Sigma (A)$ has trivial stabiliser, let $n=|A|$.  We prove Theorem \ref{ismain} by showing $|\Sigma(A)|\ge f(n)n^{2}$.  Let $m=\lfloor n/2\rfloor$, it is possible to partition $A$ into two subsets $A_{1},A_{2}$ with cardinalities $m$ and $n-m$ respectively, with the property $A_{i}\cap (-A_{i})=\phi$ for $i=1,2$.  Let $k=k(n)$, by the definition of $k(n)$ we have that $m\ge n_{k}$, and so applying Theorem \ref{main} to $A_{1},A_{2}$ individually we obtain, $|\Sigma(A_{1})|\ge \alpha_{k}m^{2}$ and $|\Sigma(A_{2})|\ge \alpha_{k}(n-m)^{2}$.  We have that, $\Sigma(A)=\Sigma(A_{1})+\Sigma(A_{2})$, and so applying Kneser's theorem (and using the fact that $\Sigma(A)$ has trivial stabiliser) we obtain,\begin{equation*} |\Sigma(A)|\ge |\Sigma(A_{1})|+|\Sigma(A_{2})|-1\ge \alpha_{k} m^{2}+\alpha_{k} (n-m)^{2}-1\end{equation*} We note $m^{2}+(n-m)^{2}\ge n^{2}/2$ and so, \begin{equation*} |\Sigma(A)|\ge \frac{\alpha_{k}n^{2}}{2}-1=f(n)n^{2}\vspace{-0.6cm}\end{equation*}\end{proof} \vspace{0.1cm}
  
Let $A\subset G\setminus \{0\}$, and suppose $\Sigma(A)$ has trivial stabiliser, Theorem \ref{ismain} gives that $|\Sigma(A)|\ge f(|A|)|A|^{2}$ for some function $f(n)\to 1/4$ as $n\to \infty$.  Of course by setting $f'(n)=f(n)-1/n^{2}$, we have \begin{equation}\label{fprime} |\Sigma(A)|\ge 1+f'(|A|)|A|^{2}\end{equation} for a function $f'(n)\to 1/4$ as $n\to \infty$.  Let $m(n)=\lfloor \sqrt{n}\rfloor$, we define, \begin{equation*}f_{2}(n)=(1-m(n)^{-1})^{2}f'(m(n))\end{equation*}  It is clear that $f_{2}(n)\to 1/4$ as $n\to \infty$, ie. $f_{2}(n)$ is of the form $1/4-o(1)$

\begin{proof}[Proof of Theorem \ref{wstab}] Let $m(n)$ and $f_{2}(n)$ be as defined as above.  Let $A\subset G$, let $H$ be the stabiliser of $\Sigma(A)$, and let $n=|A\setminus H|/|H|$.  We prove Theorem \ref{wstab} by demonstrating that, \begin{equation*}|\Sigma(A)|\ge f_{2}(n)|A\setminus H|^{2}\end{equation*}  We work in $G/H$, an element $Q$ of $G/H$ is a coset of $H$.  We let $h=|H|$ and define a sequence $A_{1},...,A_{h}$ of subsets of $G/H$ by \begin{equation*} A_{i}=\{Q\in G/H:|A\cap Q|\ge i\} \end{equation*} Note that the coset $Q$ appears in exactly $|A\cap Q|$ of the sets $A_{i}$, this implies, \begin{equation*} \sum_{i=1}^{h}|A_{i}|=|A|\end{equation*} \vspace{-0.2cm} Writing $A'_{i}$ for $A_{i}\setminus \{H\}$ we have, \begin{equation*} \sum_{i=1}^{h}|A'_{i}|=|A\setminus H|\end{equation*} The key observation is that $\Sigma(A)$ consists exactly of those elements which lie in cosets in $\Sigma(A'_{1})+...+\Sigma(A'_{h})$.  So that,\begin{equation*} |\Sigma(A)|= h|\Sigma(A'_{1})+...+\Sigma(A'_{h})| \end{equation*} We now put a lower bound on $|\Sigma(A'_{1})+...+\Sigma(A'_{h})|$.  The sets $A'_{1},...,A'_{h}$ are decreasing in size, let $j$ be maximal such that $|A'_{j}|\ge m(n)$.  Now $\sum_{i> j}|A'_{i}|<h m(n)$ so that we have, \begin{equation*} \Sigma_{i=1}^{j}|A'_{i}|\ge |A\setminus H|-hm(n)\ge h(n-m(n))\ge hn(1-m(n)^{-1})\end{equation*} For all $i\le j$ we have $|A'_{i}|\ge m(n)$ so that from (\ref{fprime}), and the fact that $f'$ is increasing, we have, \begin{equation*} |\Sigma(A'_{i})|\ge 1+f'(|A'_{i}|)|A'_{i}|^{2}\ge 1+f'(m(n))|A'_{i}|^{2}\end{equation*} Since $H$ is the stabiliser of $\Sigma(A)$ we must have that $\Sigma(A'_{1})+...+\Sigma(A'_{j})$ has trivial stabiliser in $G/H$, and so by Kneser's theorem, $|\Sigma(A'_{1})+...+\Sigma(A'_{j})|\ge |\Sigma(A'_{1})|+...+|\Sigma(A'_{j})|-(j-1)$, it follows that, \begin{equation*} |\Sigma(A'_{1})+...+\Sigma(A'_{j})|\ge \sum_{i=1}^{j}\big( 1+f'(m(n))|A'_{i}|^{2}\big) -(j-1)\ge f'(m(n))\sum_{i=1}^{j}|A'_{i}|^{2}\end{equation*} We now apply Cauchy-Schwarz to obtain, \begin{equation*} |\Sigma(A'_{1})+...+\Sigma(A'_{j})|\ge \frac{f'(m(n))}{j}\Big(\sum_{i=1}^{j}|A'_{i}|\Big)^{2}\ge \frac{f'(m(n))}{h}\big(hn(1-m(n)^{-1})\big)^{2}\end{equation*} and so,  \begin{equation*} |\Sigma(A'_{1})+...+\Sigma(A'_{j})|\ge (1-m(n)^{-1})^{2}f'(m(n))hn^{2}\end{equation*} We now simply note that $|\Sigma(A'_{1})+...+\Sigma(A'_{h})|\ge |\Sigma(A'_{1})+...+\Sigma(A'_{j})|$ and recall that, $|\Sigma(A)|= h|\Sigma(A'_{1})+...+\Sigma(A'_{h})|$ to obtain, \begin{equation*} |\Sigma(A)|\ge (1-m(n)^{-1})^{2}f'(m(n))h^{2}n^{2}= f_{2}(n)|A\setminus H|^{2} \vspace{-0.6cm}\end{equation*}\end{proof}

\section{Some preliminary lemmas}\label{prel}

In this section we prove the key lemmas which are central to our proof of Theorem \ref{main} in Section \ref{pom}.  That proof will work by building up a set $B\subset A$ with $|\Sigma (B)|$ large.  During this process we shall inspect the current choice of $B$ and let $S=\Sigma(B)$, we shall then attempt to find an element $c\in C=A\setminus B$ with $\la_{S}(c)$ relatively large, and then add $c$ to $B$, this leads to set with size one more than $|B|$, with a significantly larger set of sums.  We shall recall results proved previously (Lemma \ref{ssmall} \cite{EH} and Lemma \ref{slarge} \cite{dvgms}) which enable one to put a lower bound on $\max_{c\in C}\la(c)$.  More importantly we strengthen Lemma \ref{slarge} to Lemma \ref{thegoodincrease}, a result which is best possible, up to an error term; furthermore for our applications of the lemma the error term is small asymptotically.  

For a subset $Q\subset G$, we define $\defi_{Q}(S)=\min\{|S\cap Q|,|Q\setminus S|\}$.   Let $\rho(d)$ be the number of representations of $d$ as a difference of two elements of $S$, ie. $\rho(d)=|\{(x,y)\in S^{2}:x-y=d\}|=|S\cap (S+d)|$.  The equality $|(S+d)\cap S|+|(S+d)\setminus S|=|S|$ implies that $\la (d)=|S|-\rho(d)$.  A key fact will be that, \begin{equation*} \sum_{d\in G}\rho (d)= |S|^{2}\end{equation*} this follows from the fact that each pair of elements $(x,y)$ of $S$ have a unique difference $x-y$, so is counted exactly once by the sum.  In the following lemmas, $H$ denotes a finite abelian group.  The following two lemmas were originally proved in \cite{EH} and \cite{dvgms} respectively, we give proofs so that the reader may become familiar with ideas which we shall use in the more involved proof of Lemma \ref{thegoodincrease}.

\begin{lemma}\label{ssmall}\cite{EH} Let $C,S\subset H$ be such that $\defi_{H}(S)\le |C|/2$, then \begin{equation*} \frac{1}{|C|}\sum_{c\in C}\la (c)\ge \frac{\defi_{H}(S)}{2}\end{equation*} In particular $\la (c)\ge \defi_{H}(S)/2$ for some $c\in C$.\end{lemma}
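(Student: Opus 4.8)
The plan is to pass to $S' = S \cap \bigcup_{c \in C}(S+c)$ — or rather to exploit the representation-counting identity directly. Writing $\rho(d) = |S \cap (S+d)|$ and recalling $\la(d) = |S| - \rho(d)$, the quantity $\sum_{c \in C}\la(c) = |C|\,|S| - \sum_{c \in C}\rho(c)$. So to bound the average of $\la$ from below it suffices to bound $\sum_{c \in C}\rho(c)$ from above, and the obstruction to $\rho$ being large is exactly that $S$ has small ``deficiency'' with respect to the whole group $H$.

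First I would dispose of the trivial case: if $\defi_H(S) = 0$ then $S$ is either empty or all of $H$ (using that $H$ is a group, so $\min\{|S|, |H \setminus S|\} = 0$ forces $S = \emptyset$ or $S = H$). In either case $\la(c) = 0$ for all $c$ and the inequality holds vacuously; so assume $\defi_H(S) \ge 1$ and in particular $S \neq \emptyset$. Next I would use the elementary observation that for any $d$, $\rho(d) = |S \cap (S+d)| \le |S|$ and also $\rho(d) = |S| - \la(d) \le |S| - 0$; the point is rather to get the reverse — a lower bound on $\sum_{c\in C}\la(c)$ — which I would obtain by a double-counting argument over $S$ and $C$. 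Concretely, $\sum_{c \in C}\la(c) = \sum_{c \in C}|(S+c)\setminus S| = |\{(s,c) \in S \times C : s + c \notin S\}|$. Fix $s \in S$; the inner count is $|\{c \in C : s + c \notin S\}| = |C| - |C \cap (S - s)|$. Now $|C \cap (S-s)| \le |S - s| = |S|$ trivially, but more usefully I can bound it using $|(S-s) \cap S| $: since $s \in S$, $0 \in (S-s) \cap S$, and the translate $S - s$ overlaps $S$; the sharp estimate is that for at least $|S| - \defi_H(S)$...

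Actually the cleanest route, and the one I expect to carry the day, is this. Summing over all $c \in C$ and using subadditivity/symmetry is not enough; instead, sum over \emph{all} $d \in H$ won't directly help since $C$ is a proper subset. The decisive step is to note $\sum_{c \in C} \rho(c) = \sum_{c \in C} |S \cap (S+c)| = \sum_{s \in S} |\{c \in C : s \in S + c\}| = \sum_{s \in S}|C \cap (s - S)|$, and then bound $|C \cap (s-S)| \le |C| - |C \setminus (s - S)|$. Here is where $\defi_H$ enters: one shows $|C \setminus (s-S)| \ge \defi_H(S)/2$ \emph{on average over $s \in S$}, or more precisely that $\sum_{s \in S}|C \setminus (s - S)| \ge |S|\cdot \defi_H(S)/2$, using that $|(s-S)^c \cap C|$ relates $|C|$ to how much of $C$ can be covered by a translate of $S^c$ whose complement-size is controlled by $\defi_H(S)$, together with the hypothesis $\defi_H(S) \le |C|/2$ to keep the bound non-negative. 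Plugging back, $\sum_{c\in C}\la(c) = |C|\,|S| - \sum_{c \in C}\rho(c) \ge |C|\,|S| - \bigl(|C|\,|S| - |S|\defi_H(S)/2\bigr) = |S|\defi_H(S)/2 \ge \defi_H(S)\cdot |C| \cdot \tfrac{1}{|C|}\cdots$ — and then divide by $|C|$; one wants the final form $\tfrac{1}{|C|}\sum_{c\in C}\la(c) \ge \defi_H(S)/2$, so the averaging must be arranged so the $|S|$ cancels correctly, which it does once one counts $|\{c : s+c \notin S\}| \ge \defi_H(S)/2$ for \emph{each} $s$ rather than on average — this pointwise bound follows because $\{c : s + c\notin S\} = C \setminus (S - s)$ and $|C \setminus (S-s)| \ge |C| - |S| \ge |C| - (|C| - \defi_H(S))$...

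\medskip

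\noindent\textbf{Summary of the argument.} The heart is the identity $\sum_{c\in C}\la(c) = |\{(s,c)\in S\times C: s+c\notin S\}|$ combined with a pointwise lower bound, for each $s \in S$, on the number of $c \in C$ with $s + c \notin S$. That pointwise bound reads $|C \setminus (S - s)| \ge \defi_H(S)/2$ and is where the hypothesis $\defi_H(S) \le |C|/2$ is used (to handle the case $|S| \ge |H|/2$ one replaces $S$ by its complement, using $\la_S(c) = \la_{H\setminus S}(c)$, which is another elementary property of $\la$ worth recording). Dividing by $|S|$ is not needed — we divide by $|C|$ — so one must be slightly careful: the clean statement is that each $s$ contributes at least $\defi_H(S)/2$, so $\sum_{c\in C}\la(c) \ge |S|\cdot\defi_H(S)/2$, and since $|S| \ge \defi_H(S)$ is too weak, one instead observes directly that summing the \emph{other} way, $\sum_{c \in C}\la(c) = \sum_{c\in C}|(S+c)\setminus S|$ with each summand $\ge$ something, is cleaner: for each $c$, $|(S+c)\setminus S| \ge$ ... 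The main obstacle I anticipate is getting the pointwise estimate $|C \setminus (S-s)| \ge \defi_H(S)/2$ exactly right, i.e. verifying that the translate $S - s$ cannot cover more than $|C| - \defi_H(S)/2$ elements of $C$; this is essentially the statement that a set and a translate of a set of co-size $\defi_H(S)$ cannot jointly be too large, and it should reduce to $|C| - |C\cap(S-s)| = |C \cap (H\setminus(S-s))| $ together with the fact that $|H \setminus (S-s)| = |H| - |S| \ge \defi_H(S)$ — but the factor of $1/2$ and the reduction to the case $|S| \le |H|/2$ need to be threaded together carefully, which is precisely the delicate bookkeeping the author presumably spells out.
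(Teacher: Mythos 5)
Your instincts are right --- the identity $\la(c)=|S|-\rho(c)$, the reduction to $|S|\le |H|/2$ via $\la_S=\la_{H\setminus S}$, and the double-count over $(s,c)\in S\times C$ with $s+c\notin S$ are all exactly the pieces that make this work --- but the proposal never actually closes. The crucial misstep is where you dismiss ``sum over all $d\in H$'' as unhelpful because $C$ is a proper subset: on the contrary, since $\rho\ge 0$ one has $\sum_{c\in C}\rho(c)\le\sum_{d\in H}\rho(d)=|S|^2$, and this crude global bound is precisely what the paper uses. Combined with the hypothesis (after normalization) $|S|\le|C|/2$, it gives
\[
\sum_{c\in C}\la(c)=|C||S|-\sum_{c\in C}\rho(c)\ \ge\ |C||S|-|S|^2=|S|\bigl(|C|-|S|\bigr)\ \ge\ \frac{|C||S|}{2},
\]
and dividing by $|C|$ yields $\defi_H(S)/2$. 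Nothing more is needed.

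Your alternative double-count over $s\in S$ also works, but you chase the wrong pointwise target. For each $s\in S$ the count is $|C\setminus(S-s)|\ge|C|-|S-s|=|C|-|S|\ge|C|/2$; summing over $s$ gives $\sum_{c\in C}\la(c)\ge|S|\cdot|C|/2$, and again dividing by $|C|$ gives $|S|/2=\defi_H(S)/2$. You were aiming for a pointwise bound of $\defi_H(S)/2$ per $s$, which is not what falls out and is not what is needed; the factor of $1/2$ comes only at the very end, from $|C|-|S|\ge|C|/2$ together with the final division by $|C|$. Note that this route and the paper's are really the same argument in two costumes --- both land on $\sum_{c}\la(c)\ge|S|(|C|-|S|)$ --- so you were one clean observation away, but as written there is a gap: no pointwise estimate is actually established, the detour through $|(S-s)\cap S|$ is a dead end, and the one bound that would finish things ($\sum_{c\in C}\rho(c)\le|S|^2$) is explicitly rejected.
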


\begin{proof} Exchanging $S$ for $H\setminus S$ if required we may assume $\defi_{H}(S)=|S|\le |H|/2$, this is allowed as $\la_{S}(c)=\la_{H\setminus S}(c)$ for all $c\in C$.  From the facts $\la(c)=|S|-\rho(c)$ and $\sum_{d\in H}\rho(d)=|S|^{2}$ we deduce that, \begin{equation*}\sum_{c\in C}\la(c) = |C||S|-\sum_{c\in C} \rho(c)\ge |C||S|-|S|^{2}\end{equation*} We have also that $|C|\ge 2\defi_{H}(S)=2|S|$ so that $|C|-|S|\ge |C|/2$, it follows that, \begin{equation*}\sum_{c\in C}\la(c) \ge |S|(|C|-|S|)\ge \,\frac{|C||S|}{2}=\,\frac{|C|\defi_{H}(S)}{2}\vspace{-0.6cm}\end{equation*}\end{proof}\vspace{0.1cm}

\begin{lemma}\label{slarge}\cite{dvgms}  Let $C,S\subset H$ be such that $H=\langle C\rangle$ and $\defi_{H}(S)\ge |C|/2$ then there exists $c\in C$ with $\la (c)\ge |C|/8$\end{lemma}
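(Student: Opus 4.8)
The plan is to argue by contradiction, using the identity $\sum_{d\in H}\rho(d)=|S|^{2}$ together with the subadditivity of $\la$ and Kneser's theorem. First I would make two reductions. Since $\la_{S}(c)=\la_{H\setminus S}(c)$ for all $c$, I may replace $S$ by $H\setminus S$ if necessary and assume $\defi_{H}(S)=|S|\le |H|/2$, so that $|S|\ge |C|/2$. Secondly, if $|C|\le 8$ there is nothing to prove: because $\langle C\rangle=H$ while $S$ is a non-empty proper subset of $H$, not every $c\in C$ can lie in $\stab(S)$, so some $c\in C$ has $\la(c)\ge 1\ge |C|/8$. Hence from now on $|C|\ge 9$, and we may also delete $0$ from $C$ if it is present.

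Now suppose for contradiction that $\la(c)<|C|/8$ for every $c\in C$, and put $C_{0}=C\cup\{0\}$ and $B(t)=tC_{0}$ (the $t$-fold sumset), so that the sets $B(t)$ increase to $\langle C\rangle=H$. By property (iii) and $\la(0)=0$, every $d\in B(t)$ satisfies $\la(d)<t|C|/8$, hence $\rho(d)=|S|-\la(d)>|S|-t|C|/8$. I would then pick the integer $t\ge 1$ for which $t|C|/|S|$ falls in the interval $(4-2\sqrt 2,\,4]$; this is possible since successive values of $t|C|/|S|$ are spaced by $|C|/|S|\le 2$, which is smaller than the length $2\sqrt 2$ of that interval. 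For such $t$ one has $|S|-t|C|/8\ge |S|/2>0$, so summing $\rho(d)>|S|-t|C|/8$ over $d\in B(t)$ and using $\sum_{d\in H}\rho(d)=|S|^{2}$ gives $|B(t)|\le |S|^{2}/(|S|-t|C|/8)$. Also, one checks from $\sum_{d\in H}\la(d)=|S|(|H|-|S|)\ge |S||H|/2$ that $B(t)=H$ is incompatible with $t|C|/|S|\le 4$, so in fact $B(t)\subsetneq H$. It then remains to contradict the bound $|B(t)|\le |S|^{2}/(|S|-t|C|/8)$ by showing that $|B(t)|$ grows roughly like $t|C|$; the arithmetic closes (and this is exactly where the value $1/8$ enters) precisely because $t|C|/|S|>4-2\sqrt 2$ forces $t|C|>|S|^{2}/(|S|-t|C|/8)$.

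The hard part will be that lower bound on $|B(t)|$. Kneser's theorem only yields $|B(t)|\ge t|C_{0}|-(t-1)|W|$, where $W=\stab(B(t))$; this suffices when $|W|$ is small compared with $|C|$ but is too weak when $|W|$ is of order $|C|$. The intended remedy is to exploit that $B(t)\supseteq W$ and pass to $H/W$: there $\overline{B(t)}=t\,\overline{C_{0}}$ has trivial stabiliser and $\overline{C_{0}}$ still generates, so Kneser applies without loss in the quotient, and multiplying the estimate obtained there back by $|W|$ (using $|\overline{C_{0}}|\ge |C_{0}|/|W|$ and $|\overline{C_{0}}|\ge 2$) restores a lower bound for $|B(t)|$ of the required order; alternatively the whole proof can be organised as an induction on $|H|$, with the case $\{0\}\ne W\ne H$ referred to the inductive hypothesis. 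Carrying this dichotomy through carefully for the single admissible value of $t$ is the only genuinely delicate point; everything else is the bookkeeping indicated above.
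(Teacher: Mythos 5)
Your overall framework is exactly the paper's: reduce to $|S|\le|H|/2$, assume $\la(c)<|C|/8$ for all $c$, use subadditivity to push $\rho(d)>|S|-t|C|/8$ onto $tC_{0}$, and then contradict $\sum_{d}\rho(d)=|S|^{2}$ via a Kneser lower bound on $|tC_{0}|$. The paper takes $r=\lfloor 4|S|/|C|\rfloor$ and needs $|rC^{*}|\ge 2|S|$, which is the same move in slightly different clothing.

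The genuine gap is in the step you yourself flag as delicate, and it is more than bookkeeping. Your arithmetic requires $|B(t)|>t|C|$; that is exactly what makes the interval endpoint $4-2\sqrt 2$ the right one. But the quotient-by-$W$ argument you sketch does not deliver $|B(t)|\ge t|C|$. Writing $W=\stab(B(t))$ and $\overline{C_{0}}$ for the image of $C_{0}=C\cup\{0\}$ in $H/W$, Kneser in $H/W$ gives $|B(t)|=|W|\,|t\overline{C_{0}}|\ge |W|\bigl(t|\overline{C_{0}}|-(t-1)\bigr)$; combining this with $|\overline{C_{0}}|\ge\max\{2,\,|C_{0}|/|W|\}$ yields, after the case split on whether $|W|\lessgtr |C_{0}|/2$, only the bound
\begin{equation*}
|B(t)|\;\ge\;\frac{(t+1)(|C|+1)}{2},
\end{equation*}
which is roughly $t|C|/2$ when $t$ and $|C|$ are both large. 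Feeding $t|C|/2$ into your inequality turns the condition $x(1-x/8)>1$ (with $x=t|C|/|S|$) into $x(1-x/8)>2$, i.e.\ $(x-4)^{2}<0$, which is never satisfied. So if you merely pick \emph{some} $t$ with $t|C|/|S|\in(4-2\sqrt 2,4]$ -- in particular one near the lower endpoint -- the contradiction does not close, even though your arithmetic is tight precisely there.

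The repair is to choose $t=\lfloor 4|S|/|C|\rfloor$ (the largest admissible $t$, i.e.\ the paper's $r$), so that $4-x<|C|/|S|$. Then the defect $|S|\bigl(\tfrac{1}{1-x/8}-\tfrac{x}{2}\bigr)$ is at most of order $(4-x)^{2}|S|/8<|C|^{2}/(8|S|)\le |C|/4$, which the additive slack $(t+|C|+1)/2>|C|/2$ in the bound $(t+1)(|C|+1)/2$ comfortably absorbs. With that specific choice of $t$, and keeping the $+1$'s rather than discarding them, the argument does go through; the induction-on-$|H|$ route you mention is an alternative but is not needed. As written, though, "pick the integer $t$ for which $t|C|/|S|$ falls in the interval" leaves the proof incomplete, since $|B(t)|\ge t|C|$ is simply false in general.
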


\begin{proof}[Sketch proof] Again we may assume $\defi_{H}(S)=|S|\le |H|/2$.  We cannot proceed as in the previous proof, because we obtain no information in the case $|C|\le |S|$.  Instead we consider not only the elements of $C$ but all elements which can be expressed as a sum of a small number of elements of $C$, we then obtain the required result from the subadditivity of $\la$.  Suppose for contradiction that $\la(c)<|C|/8$ for all $c\in C$, and suppose $d$ may be expressed as a sum of $k$ elements of $C$, then by the subadditivity of $\la$ we have $\la(d)<k|C|/8$.  Using the notation $rA$ for the set $\{a_{1}+...+a_{r}:a_{i}\in A\}$, we have that the set of elements that may be expressed as a sum of at most $r$ elements of $C$ is the set $rC^{*}$ where $C^{*}=C\cup \{0\}$.  We may assume for all $d\in rC^{*}$ that $\la(d)< r|C|/8$ and so $\rho(d)>|S|-r|C|/8$.  An application of Kneser (see \cite{dvgms} for details) now shows that for $r=\lfloor 4|S|/|C|\rfloor$ we have $|rC^{*}|\ge 2|S|$.  For this value of $r$ we have $\rho(d)>|S|-r|C|/8\ge |S|-|S|/2=|S|/2$ for all $d\in rC^{*}$, so that, \begin{equation*} \sum_{d\in rC^{*}}\rho (d)> |rC^{*}| \frac{|S|}{2}\ge |S|^{2}\end{equation*} This contradicts the equality $\sum_{d\in H}\rho(d)= |S|^{2}$, and the result is proved.\end{proof}

With greater care, and with extra conditions, the result of Lemma \ref{slarge} can be greatly improved; one can find an element $c\in C$ with $\la (c)$ almost $|C|/2$.  The extra conditions we need are that $C\cap (-C)=\phi$, that $|C|$ is large, and that $\defi_{H}(S)/|C|$ is large.  We will again use the subadditivity of $\la$; in our sketch proof of Lemma \ref{slarge} we used that if $\la(c)< \eta$ then for each $r$ we have $\la(d)<r\eta$ for all $d\in rC^{*}$ and so $\rho(d)>|S|-r\eta$ for all $d\in rC^{*}$.  To obtain an the improved result we must use this statement simultaneously for many values of $r$ rather than once for a single value of $r$.  Again it is important to have a lower bound on $|rC^{*}|$ (although in the sequel we take a different set $C^{*}$), we first prove Lemma \ref{rc} which gives an improved lower bound on $|rC^{*}|$, we will then prove Lemma \ref{thegoodincrease} which gives a new bound on $\la(c)$.  We define $C^{*}=C\cup (-C)\cup \{0\}$.

\begin{lemma}\label{rc} Let $C\subset H$ be such that $H=\langle C\rangle$, $C\cap (-C)=\phi$,  $\Sigma(C)$ has trivial stabiliser and $|C|\ge 2^{2k+11}$ then for all positive integers $r$ we have that either $rC^{*}=H$ or \begin{equation*} |rC^{*}|\ge 2\Big(1-\frac{1}{2^{k+2}}\Big)r|C| \end{equation*}\end{lemma}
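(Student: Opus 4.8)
The plan is to run an induction on $r$, using Kneser's addition theorem to control the growth of $|rC^{*}|$ at each step, and to track the cumulative loss coming from the stabilisers of the intermediate sumsets. Write $S_r = rC^{*}$. Since $0 \in C^{*}$ and $C^{*} = -C^{*}$, the sets $S_r$ are symmetric, nested ($S_r \subseteq S_{r+1}$), and $S_{r+1} = S_r + C^{*}$. If ever $S_r = H$ we are done for that $r$ and all larger $r$, so assume throughout that $S_r \neq H$. Apply Kneser to the sum $S_{r+1} = S_r + C^{*}$: letting $H_r = \stab(S_{r+1})$ we get $|S_{r+1}| \ge |S_r| + |C^{*}| - |H_r|$. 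The base case $r=1$ is immediate: $|C^{*}| = 2|C|+1 \ge 2(1 - 2^{-(k+2)})|C|$ trivially.

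The heart of the argument is to show the stabiliser correction $|H_r|$ is small relative to $|C|$ — specifically that the total loss $\sum |H_r|$ telescopes into a term of size at most $\tfrac{1}{2^{k+2}} \cdot 2r|C|$. Here is where the hypotheses $C \cap (-C) = \phi$, $\stab(\Sigma(C)) = \{0\}$, and $|C| \ge 2^{2k+11}$ enter. First, because $\Sigma(C)$ has trivial stabiliser and $C \subseteq \Sigma(C)$ "generates" the relevant sums, one deduces that $\stab(rC^{*})$ is trivial as well for every $r$ (a sumset of symmetric sets each containing $C$ has stabiliser contained in $\stab(\Sigma(C))$ after enough steps; more directly, if $H_r \ne \{0\}$ then $S_{r+1}$ is a union of cosets of $H_r$, and one shows such a large nontrivial $H_r$ forces a nontrivial stabiliser of $\Sigma(C)$, contradiction). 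In fact the cleaner route: if $H_r \neq \{0\}$, then since $C^{*}$ is not contained in a single coset of $H_r$ (as $\langle C \rangle = H$ and $S_{r+1} \neq H$), passing to $H/H_r$ and counting shows $|H_r|$ must be substantially smaller than $|C|$; iterating, one extracts a bound of the form $|H_r| \le |C| / 2^{2k+10}$ or the sumset has already saturated. Summing the Kneser inequality from $1$ to $r$ then yields $|S_r| \ge r|C^{*}| - \sum_{i=1}^{r-1}|H_i| \ge r(2|C|+1) - (r-1)\tfrac{|C|}{2^{2k+10}} \ge 2\bigl(1 - \tfrac{1}{2^{k+2}}\bigr)r|C|$, using $|C| \ge 2^{2k+11}$ to absorb constants.

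The main obstacle I anticipate is precisely the stabiliser bound: controlling $|H_r| = |\stab(rC^{*})|$ uniformly in $r$. One cannot naively say it is trivial, since $rC^{*}$ need not equal $\Sigma(C)$; instead one must argue that a large stabiliser of some $rC^{*}$ propagates — by symmetry and the fact that adding further copies of $C^{*}$ only enlarges the set within unions of $H_r$-cosets — to force $H_r \subseteq \stab(\Sigma(C)) = \{0\}$ once $r$ is large enough, and separately handle small $r$ by the size hypothesis $|C| \ge 2^{2k+11}$ (which makes $|H_r|/|C|$ negligible whenever $H_r$ is a proper nontrivial subgroup that could arise). Getting the bookkeeping to produce exactly the constant $1 - 2^{-(k+2)}$ rather than something weaker will require care in how the $2^{2k+11}$ threshold is spent between the "$+1$" terms, the telescoped stabiliser loss, and the case analysis on whether $rC^{*}$ has saturated; but this is routine optimisation once the structural claim about $\stab(rC^{*})$ is in place.
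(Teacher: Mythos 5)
Your plan runs an iterated Kneser argument, bounding $|S_{r+1}|\ge|S_r|+|C^*|-|H_r|$ and telescoping, and the whole thing hinges on the claim that each stabiliser $H_r=\stab(S_{r+1})$ satisfies $|H_r|\le |C|/2^{2k+10}$ (or else ``the sumset has saturated''). That claim is false. The stabiliser $\stab(rC^*)$ is \emph{not} controlled from above by $|C|$: if a sizeable chunk of $C$ falls inside a proper subgroup $K$ of $H$, then $rC^*$ is stabilised by a subgroup containing $\Sigma(C\cap K)$, whose order is (by Theorem~\ref{dvgms}) at least $|C\cap K|^2/64$ and can thus vastly exceed $|C|$. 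Your justification --- ``$C^*$ is not contained in a single coset of $H_r$, so $|H_r|$ must be substantially smaller than $|C|$'' --- simply does not follow: $C^*$ spreading over $\ge 2$ cosets of $H_r$ places no upper bound whatsoever on $|H_r|$. And ``saturated'' does not rescue this: a large $|H_r|$ gives you $|S_{r+1}|\ge|H_r|$, a single lower bound, but the target $2(1-2^{-(k+2)})r|C|$ grows linearly in $r$, and once the per-step Kneser increment $|C^*|-|H_r|$ is negative you have no mechanism to keep $|S_r|$ growing.

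The paper sidesteps this entirely by applying Kneser once, in the quotient $H/K$ where $K=\stab(rC^*)$, rather than step-by-step in $H$. Writing $p$ for the number of nontrivial $K$-cosets met by $C^*$, one gets $|rC^*|\ge rp|K|$, so the task becomes proving $p|K|\ge 2(1-2^{-(k+2)})|C|$, and now a \emph{large} $|K|$ is good news rather than an obstacle. The paper then splits into two cases: if $|C\cap K|\ge|C|/2^{k+2}$ then $|K|\ge|C\cap K|^2/64\ge 2|C|$ (using $|C|\ge 2^{2k+11}$) and we are done with $p\ge 1$; otherwise almost all of $C'=C\cup(-C)$ lies outside $K$, spread over the $p$ cosets $Q_1,\dots,Q_p$, and pigeonhole gives a coset with $|C'\cap Q|\ge 2(1-2^{-(k+2)})|C|/p$, whence $|K|=|Q|\ge 2(1-2^{-(k+2)})|C|/p$. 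The lesson is that one should bound $p|K|$ jointly, not $|K|$ alone; your telescoping scheme cannot see this and the stabiliser estimate it requires is unprovable because it is untrue.
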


\begin{proof} Let $K$ be the stabiliser of $rC^{*}$.  If $C\subset K$ then $K=H$ and $rC^{*}=H$ and we are done.  So we may assume $C^{*}$ meets at least one non-trivial coset of $K$, note that $C^{*}$ also meets $K$ as $0\in C^{*}\cap K$.  Let $Q_{1},...,Q_{p}$ be the non-trivial cosets of $K$ meeting $C^{*}$, it follow that $rC^{*}$ is the set of elements of the cosets in $r\{K,Q_{1},...,Q_{p}\}$.  By the definition of $K$ the set $r\{K,Q_{1},...,Q_{p}\}$ has trivial stabiliser in $H/K$ so that an application of Kneser's theorem yields, \begin{equation*} |r\{K,Q_{1},...,Q_{p}\}|\ge r(p+1)-(r-1)\ge rp\end{equation*} and so $|rC^{*}|\ge rp|K|$.  So to complete the proof we must show that,\begin{equation*} |K|\ge 2\Big(1-\frac{1}{2^{k+2}}\Big)\frac{|C|}{p} \end{equation*}  In the case that $|C\cap K|\ge |C|/2^{k+2}$ we use the fact that $K$ contains $\Sigma(C\cap K)$ together with Theorem \ref{dvgms} (which we may apply to $C\cap K$, as $\Sigma(C\cap K)$ has trivial stabiliser) to deduce, \begin{equation*} |K|\ge |\Sigma(C\cap K)|\ge \frac{|C\cap K|^{2}}{64}\ge \frac{1}{64}\frac{|C|^{2}}{2^{2k+4}}\ge 2|C|\end{equation*} the final inequality follows from $|C|\ge 2^{2k+11}$, and we are done.  We may now assume that $|C\cap K|\le |C|/2^{k+2}$, this implies $|C\setminus K|\ge (1-2^{-(k+2)})|C|$ and likewise $|C'\setminus K|\ge (1-2^{-(k+2)})|C'|$, where $C'$ denotes $C\cup (-C)$.  Alternatively $|C' \cap \bigcup_{i=1}^{p} Q_{i}|\ge (1-2^{-(k+2)})|C'|$, so by the pigeon hole principle some coset $Q$ must have, \begin{equation*} |C'\cap Q|\ge \Big(1-\frac{1}{2^{k+2}}\Big)\frac{|C'|}{p}= 2 \Big(1-\frac{1}{2^{k+2}}\Big)\frac{|C|}{p}\end{equation*} and so noting $|K|=|Q|\ge |C'\cap Q|$ we are done.\end{proof}

We now deduce the required strengthening of Lemma \ref{slarge},

\begin{lemma}\label{thegoodincrease} Let $C\subset H$ be such that $H=\langle C\rangle$, $C\cap (-C)=\phi$, $\Sigma(C)$ has trivial stabiliser and $|C|\ge 2^{2k+11}$, and let $S\subset H$ be such that $\defi_{H}(S)\ge 2^{k+1}|C|$ then there exists $c\in C$ with $\la (c)\ge (1-2^{1-k})|C|$\end{lemma}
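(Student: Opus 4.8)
The plan is to argue by contradiction in the spirit of the sketch proof of Lemma~\ref{slarge}, but exploiting the subadditivity of $\la$ for \emph{many} values of $r$ at once rather than a single value. Suppose that $\la(c)<(1-2^{1-k})|C|$ for every $c\in C$; write $\eta=(1-2^{1-k})|C|$. As before we may assume $\defi_H(S)=|S|\le |H|/2$, so $|S|\ge 2^{k+1}|C|$. Since $0,\pm c\in C^{*}$ and $\la(c)=\la(-c)$, subadditivity gives $\la(d)<r\eta$ for every $d\in rC^{*}$, hence $\rho(d)=|S|-\la(d)>|S|-r\eta$ for all such $d$. The point is that $rC^{*}\subset (r+1)C^{*}$, so the sets $rC^{*}$ form an increasing chain; I will sum the inequality $\rho(d)>|S|-r\eta$ over the ``shells'' $rC^{*}\setminus(r-1)C^{*}$ for a suitable range $r=1,\dots,T$, choosing the largest meaningful value of $r$ for each element, and then compare against $\sum_{d\in H}\rho(d)=|S|^{2}$.

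Here is the bookkeeping I would carry out. If for some $r$ we had $rC^{*}=H$, then for every $d\in H$ we would get $\rho(d)>|S|-r\eta$; but in fact one can cut off the argument before this happens, so assume instead (using Lemma~\ref{rc}, whose hypotheses match ours exactly) that $|rC^{*}|\ge 2(1-2^{-(k+2)})r|C|$ for all $r$ in the range of interest. Let $T$ be roughly $|S|/\eta$ — more precisely the largest integer with $T\eta\le |S|$ — so that $\rho(d)>|S|-r\eta\ge 0$ remains a useful lower bound throughout $1\le r\le T$. For each $d$ lying in $TC^{*}$, let $r(d)$ be the least $r$ with $d\in rC^{*}$ (set $r(d)=\infty$, contributing nothing, if $d\notin TC^{*}$, and $r(d)=0$, i.e.\ $d=0$, contributing $\rho(0)=|S|$). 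Then
\begin{equation*}
|S|^{2}=\sum_{d\in H}\rho(d)\ \ge\ \sum_{d\in TC^{*}}\rho(d)\ >\ \sum_{d\in TC^{*}}\bigl(|S|-r(d)\eta\bigr).
\end{equation*}
Grouping by the value $r(d)=r$, the number of $d$ with $r(d)=r$ is $|rC^{*}|-|(r-1)C^{*}|$, so the right-hand side is a telescoping sum; using Abel summation it rewrites as $|S|\cdot|TC^{*}|-\eta\sum_{r=1}^{T}\bigl(|TC^{*}|-|(r-1)C^{*}|\bigr)$, which simplifies to $\eta\sum_{r=0}^{T-1}|rC^{*}|+(|S|-T\eta)|TC^{*}|\ge \eta\sum_{r=0}^{T-1}|rC^{*}|$. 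Feeding in the Lemma~\ref{rc} bound $|rC^{*}|\ge 2(1-2^{-(k+2)})r|C|$ gives a lower bound of order $(1-2^{-(k+2)})\eta|C|\,T^{2}$, hence of order $(1-2^{-(k+2)})(1-2^{1-k})\,|S|^{2}$ after substituting $\eta=(1-2^{1-k})|C|$ and $T\approx |S|/\eta$.

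Thus I obtain $|S|^{2}>c_k|S|^{2}$ where $c_k=(1-2^{-(k+2)})(1-2^{1-k})(1+o(1))$, and the whole strategy works precisely because this constant is, after one tracks the error terms carefully, at least $1$ for $k\ge 9$: the gain $(1-2^{1-k})^{-1}$ demanded from $\eta$ is more than compensated by the near-doubling $|rC^{*}|\approx 2r|C|$ coming from $C\cap(-C)=\phi$. The main obstacle, and where the real care is needed, is controlling the discretisation losses: replacing $|S|/\eta$ by the integer $T$, handling the boundary terms $r=0$ (the element $0$ itself) and $r=T$ (where $|S|-T\eta$ may be positive), and making sure the hypothesis $|S|\ge 2^{k+1}|C|$ is strong enough that $T$ is large enough for the $\sum r \sim T^2/2$ approximation to lose only a lower-order factor. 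One also must verify that the range $r\le T$ never forces $rC^{*}=H$ prematurely — or, if it does, observe that $rC^{*}=H$ only makes the sum $\sum_{d\in rC^{*}}\rho(d)$ \emph{larger}, so the contradiction is reached all the same. Once these error terms are shown to be absorbed into the constant for $k\ge 9$, the contradiction with $\sum_{d\in H}\rho(d)=|S|^{2}$ is immediate and the lemma follows.
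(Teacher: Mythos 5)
Your proposal is essentially the paper's own argument: the paper also proceeds by contradiction, uses Lemma~\ref{rc} to lower-bound $|rC^*|$, and exploits the bound $\rho(d)>|S|-r\eta$ simultaneously for $r=1,\dots,O(|S|/|C|)$; the only difference is bookkeeping, where you sum $\rho$ over the shells $rC^*\setminus(r-1)C^*$ via Abel summation while the paper uses the equivalent dual formulation $\sum_d\rho(d)=\int_0^{|S|}|D_t|\,dt$ split over the intervals $I_r$, with a separately handled small-$t$ tail playing the role of the terms $r>\lfloor|S|/|C|\rfloor$ in your sum. One slip to fix: your displayed constant $c_k=(1-2^{-(k+2)})(1-2^{1-k})(1+o(1))$ should have the second factor in the \emph{denominator}, i.e.\ $c_k=(1-2^{-(k+2)})/(1-2^{1-k})\cdot(1+o(1))$ (this is what makes $c_k>1$, and is what your surrounding prose about "the gain $(1-2^{1-k})^{-1}$" actually describes); and the remark that $rC^*=H$ "only makes the sum larger" is a bit quick, since it trades the quadratic-in-$r$ lower bound for a merely linear one for those $r$ -- it does check out, but only after the same sort of careful constant-chasing the rest of the proof requires.
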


\begin{proof} For all $d\in H$ we have $\la_{S}(d)=\la_{H\setminus S}(d)$, so that exchanging $S$ with $H\setminus S$ if required, we may assume $\defi_{H}(S)=|S|\le |H|/2$.  As in previous proofs we shall proceed by assuming the result fails and from this deduce that $\sum_{d} \rho(d)> |S|^{2}$, a contradiction.  On this occasion we must be much more precise in our lower bound on $\sum_{d}\rho(d)$.  It will be useful to use the fact that $\sum_{d}\rho(d)=\int_{0}^{|S|}|D_{t}| \, dt $ where $D_{t}=\{d\in H:\rho(d)\ge t\}$.
\footnote{We choose this representation, rather than the sum $\sum_{t=1}^{|S|}|D_{t}|$, as it means we do not have to concern ourselves with integer parts, etc.} 
\\ \indent Suppose that every element $c\in C$ has $\la(c)<(1-2^{1-k})|C|$, this implies $\la(c)<(1-2^{1-k})|C|$ for all $c\in C^{*}$, since $\la(0)=0$ and $\la(-c)=\la(c)$.  So that by the subadditivity of $\la$ we have for all $r$ and for all $d\in rC^{*}$ that $\la (d)<r(1-2^{1-k})|C|$, and so $\rho(d)>|S|-r(1-2^{1-k})|C|$.  For each $r=1,...,\lfloor |S|/|C|\rfloor$, we define, \begin{equation*} I_{r}=(|S|-(r+1)(1-2^{1-k})|C|,|S|-r(1-2^{1-k})|C|]\end{equation*} For $t\in I_{r}$ we have $t\le |S|-r(1-2^{1-k})|C|$ and so $D_{t}\supset rC^{*}$, applying Lemma \ref{rc} we deduce, $|D_{t}|\ge 2(1-2^{-(k+2)})r|C|$.
\\ \indent Note also that for $t\le |S|-(\lfloor |S|/|C| \rfloor+1) (1-2^{1-k})|C|$ we have $D_{t}\supset \lfloor |S|/|C|\rfloor C^{*}$ and so $|D_{t}|\ge 2(1-2^{-(k+2)}) \lfloor |S|/|C|\rfloor |C|$.  Using the fact that $\lfloor |S|/|C|\rfloor\ge (|S|-|C|)/|C|\ge(1-2^{-(k+1)})|S|/|C|$ we have, \begin{equation*} |D_{t}|\ge 2\Big(1-\frac{1}{2^{k+2}}\Big)\Big\lfloor \frac{|S|}{|C|}\Big\rfloor |C|\ge 2\Big(1-\frac{1}{2^{k+2}}\Big) \Big(1-\frac{1}{2^{k+1}}\Big)|S| \end{equation*} 
for all $t\le |S|-(\lfloor |S|/|C| \rfloor +1) (1-2^{1-k})|C|$.  Note that $\lfloor |S|/|C|\rfloor +1\le |S|/|C|+1\le (1+2^{-(k+1)})|S|/|C|$ so that the above bound on $|D_{t}|$ hold for all $t$ satisfying $t\le |S|-(1-2^{1-k})(1+2^{-(k+1)})|S|$ and so certainly for all $t\le 3|S|/2^{k+1}$.  We obtain, \begin{equation*}  \sum_{d\in H}\rho(d)= \int_{0}^{|S|}|D_{t}| \, dt\ge \sum_{r=1}^{\lfloor |S|/|C|\rfloor}\int_{t\in I_{r}}|D_{t}| \, dt+\int_{0}^{3|S|/2^{k+1}}|D_{t}| \, dt \end{equation*} 
and so, \begin{equation*} \sum_{d\in H}\rho(d) \ge \sum_{r=1}^{\lfloor |S|/|C|\rfloor } 2\Big(1-\frac{1}{2^{k-1}} \Big) \Big(1-\frac{1}{2^{k+2}}\Big)r|C|^{2}+\frac{3|S|}{2^{k+1}}2\Big(1-\frac{1}{2^{k+2}}\Big) \Big(1-\frac{1}{2^{k+1}}\Big)|S|\end{equation*}  
Since $\lfloor |S|/|C|\rfloor \ge (1-2^{-(k+1)})|S|/|C|$ we have that $\sum_{r=1}^{\lfloor |S|/|C|\rfloor }r= (\lfloor|S|/|C|\rfloor)(\lfloor|S|/|C|\rfloor+1)/2 \ge (1-2^{-(k+1)})|S|^{2}/2|C|^{2}$, and so, \begin{equation*}  \sum_{d\in H}\rho(d)\ge \Big(1-\frac{1}{2^{k+1}}\Big)  \Big(1-\frac{1}{2^{k-1}}\Big) \Big(1-\frac{1}{2^{k+2}}\Big)|S|^{2} + \frac{3}{2^{k}}\Big(1-\frac{1}{2^{k+2}}\Big) \Big(1-\frac{1}{2^{k+1}}\Big)|S|^{2}\end{equation*}  However this quantity is larger than $|S|^{2}$, a contradiction. \end{proof}

\section{Proof of Theorem \ref{main}}\label{pom}

Let us recall the sequences $(n_{k})_{k\ge 9}$ and $(\alpha_{k})_{k\ge 9}$ which appear in the statement of Theorem \ref{main}. The sequence $(n_{k})_{k\ge 9}$ is defined by $n_{9}=1$, and for $k\ge 10$ by $n_{k}=2^{k^{2}}$.  The only information we shall use about this sequence is that for $k\ge 10$ it satisfies, \begin{equation} n_{k}> 2^{5k+15} \qquad \text{and}\qquad \frac{n_{k}}{2^{k}}\ge n_{k-1} \end{equation}  We recall the sequence $(\alpha_{k})_{k\ge 9}$ defined by, $\alpha_{9}=1/64$ and for $k\ge 10$ by, \begin{equation*} \alpha_{k}=\min\Big\{\,\frac{6}{5}\alpha_{k-1}\, ,\frac{1}{2}-\frac{1}{2^{k-1}}\Big\}\end{equation*}

Having proved all the necessary preliminary results we now proceed towards our proof of Theorem \ref{main}.  Our proof is by induction on $k$, the case $k=9$ follows from Theorem \ref{dvgms}, so we turn to the induction step, we let $k\ge 10$ and suppose the result holds for all smaller values of $k$.  Let $A$ be a set of size $n$ satisfying the conditions of Theorem \ref{main}, we may assume $n\ge n_{k}$, else there is nothing to prove.  As mentioned previously we shall build up a set $B\subset A$ with $\Sigma (B)$ large.  We define a function $g(t)$ for $t\in\{\lfloor n/2^{k}\rfloor,...,\lceil n-n/2^{k}\rceil \}$ by $g(\lfloor n/2^{k}\rfloor) =0$ and then inductively by,\begin{equation*} g(t+1)=g(t)+\Big(1-\frac{1}{2^{k-1}}\Big)(n-t)\end{equation*} We shall deduce Theorem \ref{main} from the following lemma.

\begin{lemma}\label{mainlemma} Let $A$ be as above then for all $t\in\{\lfloor n/2^{k}\rfloor,...,\lceil n-n/2^{k} \rceil\}$ there is a subset $B\subset A$ of cardinality $t$ with either $|\Sigma(B)|\ge g(t)$ or $|\Sigma(B)|\ge \alpha_{k}n^{2}$.\end{lemma}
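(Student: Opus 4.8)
The proof is an induction on $t$, matching the inductive definition of $g$. The base case $t=\lfloor n/2^k\rfloor$ is trivial: take any $B\subset A$ of that size, then $|\Sigma(B)|\ge 0=g(\lfloor n/2^k\rfloor)$. For the inductive step, suppose we have $B\subset A$ of cardinality $t$ with $|\Sigma(B)|\ge g(t)$ (if instead $|\Sigma(B)|\ge\alpha_k n^2$ we are already done for all larger $t$, since $\Sigma(B)\subset\Sigma(B')$ for $B\subset B'$). Write $S=\Sigma(B)$ and $C=A\setminus B$, so $|C|=n-t$. We want to find $c\in C$ with $\lambda_S(c)\ge(1-2^{1-k})|C|$, because then $B\cup\{c\}$ has $|\Sigma(B\cup\{c\})|=|S+\{0,c\}|=|S|+\lambda_S(c)\ge g(t)+(1-2^{1-k})(n-t)=g(t+1)$, completing the induction. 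Here I use $\Sigma(B\cup\{c\})=\Sigma(B)\cup(\Sigma(B)+c)=S\cup(S+c)$.

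\textbf{Finding the good element $c$.} To produce such a $c$ I want to apply Lemma~\ref{thegoodincrease} with the ambient group taken to be $H=\langle C\rangle$. I must check its hypotheses. First, $C\cap(-C)=\phi$ since $C\subset A$ and $A\cap(-A)=\phi$. Second, $\Sigma(C)$ has trivial stabiliser: $C\subset A$, so $\Sigma(C)\subset\Sigma(A)$ and $\stab(\Sigma(C))\subset\stab(\Sigma(A))=\{0\}$, as observed in the introduction. Third, $|C|=n-t\ge n/2^k\ge n_k/2^k> 2^{4k+15}\ge 2^{2k+11}$ using $t\le\lceil n-n/2^k\rceil$ and $n_k>2^{5k+15}$. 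The remaining hypothesis is the crucial one: we need $\defi_H(S)\ge 2^{k+1}|C|$. This is where the case distinction in the lemma statement comes in. If $\defi_H(S)<2^{k+1}|C|$, I claim we can instead conclude directly that $|\Sigma(B)|\ge\alpha_k n^2$ — this should follow because a small deficiency forces $S=\Sigma(B)$ to already be almost all of $H=\langle C\rangle$, and one can bound $|H|$ from below via Kneser applied to a suitable sumset of translates of $C$ (or via Theorem~\ref{dvgms} applied to $C$ itself, using $|C|\ge n/2^k\ge n_k/2^k\ge n_{k-1}$ and $|\Sigma(C)|\ge|C|^2/64$), and then $|S|\ge|H|-\defi_H(S)$ combined with these bounds on $|H|$ and $|C|$ should exceed $\alpha_k n^2$. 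When instead $\defi_H(S)\ge 2^{k+1}|C|$, Lemma~\ref{thegoodincrease} hands us exactly the $c$ we want.

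\textbf{Deducing Theorem~\ref{main} from the lemma.} Apply the lemma at $t=\lceil n-n/2^k\rceil$: we get $B\subset A$ with $|\Sigma(B)|\ge\alpha_k n^2$ (done), or $|\Sigma(B)|\ge g(\lceil n-n/2^k\rceil)$, and then $|\Sigma(A)|\ge|\Sigma(B)|$. It remains to check $g(\lceil n-n/2^k\rceil)\ge\alpha_k n^2$. Unwinding the recursion, $g(\lceil n-n/2^k\rceil)=(1-2^{1-k})\sum_{t}(n-t)$ over $t$ from $\lfloor n/2^k\rfloor$ to $\lceil n-n/2^k\rceil-1$; this sum is a sum of consecutive integers running roughly from $n/2^k$ up to $n(1-1/2^k)$, so it is approximately $\tfrac12 n^2(1-1/2^k)^2$, giving $g\gtrsim\tfrac12(1-2^{1-k})(1-2^{-k})^2 n^2$. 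One checks this is at least $(\tfrac12-2^{-(k-1)})n^2\ge\alpha_k n^2$ for the relevant range of $k$ (with a little room to spare, absorbing the floor/ceiling corrections, which cost only $O(n)$ and are negligible against $n^2$ since $n\ge n_k$ is huge).

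\textbf{Main obstacle.} The delicate point is the case $\defi_H(S)<2^{k+1}|C|$: one must show this genuinely forces $|\Sigma(B)|\ge\alpha_k n^2$ rather than merely that $S$ is large inside $H$. The argument needs a good lower bound on $|H|=|\langle C\rangle|$, and the natural source is Theorem~\ref{dvgms} (or the inductive hypothesis of Theorem~\ref{main} at level $k-1$, applicable since $|C|\ge n_k/2^k\ge n_{k-1}$), combined with $\Sigma(C)\subset\langle C\rangle=H$. Quantitatively one wants $|S|\ge|H|-\defi_H(S)\ge|C|^2/64-2^{k+1}|C|$, and then using $|C|\ge n/2^k$ and $|C|\ge 2^{2k+11}$ to see this beats $\alpha_k n^2$. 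Getting the constants to line up — particularly the factor $1/64$ surviving against the $\alpha_k\le 1/2$ target while only $|C|\ge n/2^k$ is available — is the part that requires care; it should work because $|C|^2/64\ge (n/2^k)^2/64$ is comparable to $\alpha_k n^2$ only when $2^{2k}$ is comparable to a constant, which fails, so in fact one needs $S$ to be large relative to $n$, not just relative to $|C|$, meaning the real content is that when the deficiency is small $S$ must be a full coset-union covering most of $G$ — and here $\Sigma(A)$ having trivial stabiliser, hence $S=\Sigma(B)$ having trivial stabiliser, plus $S\supset$ large translate structure, forces $|S|$ large via Kneser on $\sum$ of translates of $S$ by elements of $C$. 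I would present this case carefully as a separate sub-lemma.
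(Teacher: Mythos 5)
Your skeleton — induct on $t$, take $S=\Sigma(B)$, $C=A\setminus B$, $H=\langle C\rangle$, and try to find $c\in C$ with $\lambda_S(c)\ge(1-2^{1-k})|C|$ via Lemma~\ref{thegoodincrease}, deferring to the ``$|\Sigma(B)|\ge\alpha_k n^2$'' escape hatch when that fails — matches the paper's strategy, and your reduction from Theorem~\ref{main} to the lemma is essentially the paper's. But the dichotomy you set up is wrong, and this is not a gap you can patch by ``presenting the case carefully as a separate sub-lemma'': it is where all the work in the paper lives.

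You branch on whether $\defi_H(S)\ge 2^{k+1}|C|$, where $H=\langle C\rangle$ is the subgroup itself, and claim the negation forces $|S|\ge\alpha_k n^2$. It does not. The inequality $|S|\ge|H|-\defi_H(S)$, which your sketch relies on, holds only when $\defi_H(S)=|H\setminus S|$, i.e.\ when $S$ already fills more than half of $H$; in the other sub-case $\defi_H(S)=|S\cap H|$, so a small $\defi_H(S)$ merely tells you $S$ barely meets $H$ — it tells you nothing about $|S|$, since $S$ is typically spread across many cosets of $H$, not sitting inside $H$. Concretely, $S$ could meet $H$ in essentially only $\{0\}$ while being thinly distributed over many cosets of $H$: then $\defi_H(S)$ is tiny, no good $c$ comes out of Lemma~\ref{thegoodincrease} applied to $S\cap H$, and $|S|$ is nowhere near $\alpha_k n^2$. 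This is exactly the configuration the paper must rule out, and it does so not by looking only at $H$ but by classifying \emph{every} coset $Q$ of $H$ as sparse, very sparse, or dense according to $|S\cap Q|$, then proving (Lemma~\ref{la}) that a good increment exists unless every coset is sparse-or-dense, at least one is dense, and $S\not\subset\bar{\mathcal D}$. Only under those three structural assumptions does the paper deduce $|S|\ge\alpha_k n^2$, and even then it requires several further lemmas (the shift claim $Q\pm b$ dense, the maximal subgroup $K$ with $Q_0+K\subset\mathcal D$, the order lemma, and a final counting lemma) to produce the $\ge 2^{2k+1}$ dense cosets that make $|S|$ large via~(\ref{SQbig}). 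Your proposal contains none of this machinery; the parenthetical appeal to ``Kneser on sums of translates of $S$'' and ``$S$ must be a full coset-union'' gestures in the right direction but is not an argument, and is false as a bare implication because $\Sigma(B)$ having trivial stabiliser does not by itself prevent $S$ from being spread thinly over sparse cosets. So the proposal is a correct outline of the induction but is missing the coset-level analysis that is the actual content of the paper's proof.
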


Theorem \ref{main} now follows by taking $B$ of cardinality $\lceil n-n/2^{k}\rceil$ with either $|\Sigma(B)|\ge g(\lceil n-n/2^{k}\rceil)$ or $|\Sigma(B)|\ge \alpha_{k}n^{2}$, in the latter case we are done immediately as $|\Sigma(A)|\ge |\Sigma(B)|$.  In the former case we note that, \begin{equation*}g(\lceil n-n/2^{k}\rceil)\ge \Big(1-\frac{1}{2^{k-1}}\Big)\sum_{t=n/2^{k}}^{n-n/2^{k}}n-t= \Big(1-\frac{1}{2^{k-1}}\Big)\sum_{t=n/2^{k}}^{n-n/2^{k}}t \end{equation*} and we may bound the sum by,\begin{equation*} \sum_{t=n/2^{k}}^{n-n/2^{k}}t  \ge \Big(1-\frac{1}{2^{k}}\Big)^{2}\frac{n^{2}}{2}-\frac{n^{2}}{2^{2k+1}} \ge \Big(\frac{1}{2}-\frac{1}{2^{k}}\Big)n^{2}\end{equation*} and so, \begin{equation*}|\Sigma(A)| \ge \Big(1-\frac{1}{2^{k-1}}\Big)\Big(\frac{1}{2}-\frac{1}{2^{k}}\Big)n^{2}\ge \Big(\frac{1}{2}-\frac{1}{2^{k-1}}\Big)n^{2}\ge \alpha_{k} n^{2}\end{equation*} and Theorem \ref{main} is proved.

We must now prove Lemma \ref{mainlemma}.  We do this by induction on $t$.  The result is trivial for $t=\lfloor n/2^{k}\rfloor$.  If we ever find a set $B$ with $|\Sigma(B)|\ge \alpha_{k}n^{2}$ then the induction is trivial from that point on.  So for the induction step we let $B$ be a set of size $t$ with $|\Sigma (B)|\ge g(t)$, and set $S=\Sigma (B)$ and $C=A\setminus B$, we will then show either that $|S|\ge \alpha_{k}n^{2}$ or that, \begin{equation} \la(c)\ge \Big(1-\frac{1}{2^{k-1}}\Big)|C|=\Big(1-\frac{1}{2^{k-1}}\Big)(n-t) \qquad \text{for some}\,\, c\in C\end{equation} the induction step is then completed by considering $B\cup \{c\}$, as $B\cup\{c\}$ is then a set of size $t+1$ with $\Sigma(B\cup\{c\})=S\cup (S+c)$ and so, \begin{equation*}|\Sigma(B\cup\{c\})|\ge |S|+\la(c)\ge g(t)+(1-2^{1-k})(n-t)=g(t+1)\end{equation*}

Let us first note a lower bound on $|S|$ which we shall use during the proof, since $|B|=t\ge \lfloor n/2^{k}\rfloor\ge n/2^{k+1}$, it is immediate from Theorem \ref{dvgms} that, \begin{equation}\label{Sbig} |S|\ge \frac{|B|^{2}}{64}\ge \frac{n^{2}}{2^{2k+8}}\ge 2^{3k+7}n \end{equation}

Let $H=\langle C\rangle$.  As in \cite{dvgms}, we must analyse the intersection of $S$ with the cosets of $H$.  We begin with some lower bounds on $|H|$ that we shall use during the proof.  The set $C$ is contained in $A$ so that $C\cap (-C)=\phi$ and $0\not \in C$, also this implies that $\Sigma(C)$ has trivial stabiliser, note also that $|C|=n-t\ge n/2^{k}$.  An application of Theorem \ref{dvgms} yields,
\begin{equation*} |H|\ge |\Sigma(C)|\ge \frac{|C|^{2}}{64}\ge \frac{n^{2}}{2^{2k+6}}\ge 2^{3k+9}n \end{equation*} while an application of the induction hypothesis of Theorem \ref{main} yields, \begin{equation*} |H|\ge |\Sigma(C)|\ge \alpha_{k-1}|C|^{2}\ge \frac{3}{4}\frac{\alpha_{k}n^{2}}{2^{2k}}\end{equation*}  We say that a coset $Q$ of $H$ is \emph{sparse} if $|S\cap Q|\le 2^{k+1}n$ and $Q$ is \emph{very sparse} if $|S\cap Q|\le |C|/2$, while $Q$ is \emph{dense} if $|Q\setminus S|\le 2^{k+1}n$, we note that if $Q$ is dense then $|S\cap Q|$ must be fairly large, in particular, \begin{equation}\label{SQbig} |S\cap Q|\ge |H|-2^{k+1}n\ge \frac{3}{4}\alpha_{k} n^{2}/2^{2k} -2^{k+1}n\ge \alpha_{k} n^{2}/2^{2k+1}\end{equation} as $\alpha_{k}n/2^{2k+2}\ge 2^{k+1}$.  

\begin{lemma}\label{la} If any of the following conditions (i),(ii) or (iii) hold then there is an element $c\in C$ with $\la(c)\ge (1-2^{1-k})|C|$. \\ (i) There is a coset $Q$ which is neither sparse nor dense.\\ (ii) There are at least $2^{2k+5}$ cosets that are sparse but not very sparse \\ (iii) No coset $Q$ is dense.\end{lemma}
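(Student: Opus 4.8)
The plan is to analyse $S=\Sigma(B)$ coset by coset. Since $C\subseteq H=\langle C\rangle$, for $c\in C$ the set $S+c$ meets exactly the cosets of $H$ that $S$ does, and inside each coset $Q$ it is the translate of $S\cap Q$ by $c$. Fixing for each coset $Q$ a bijection $Q\to H$ and writing $S_Q\subseteq H$ for the image of $S\cap Q$ (so $|S_Q|=|S\cap Q|$ and $|H\setminus S_Q|=|Q\setminus S|$), one obtains the basic identity $\la_S(c)=\sum_Q\la_{S_Q}(c)$, the inner quantities being computed in $H$. Thus in each of the three cases it suffices to produce $c\in C$ for which one summand, or the whole sum, is at least $(1-2^{1-k})|C|$. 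Throughout I use the standing facts $C\cap(-C)=\phi$, $H=\langle C\rangle$ and $\Sigma(C)$ has trivial stabiliser (so Lemmas \ref{ssmall}, \ref{rc}, \ref{thegoodincrease} apply to this $C$), together with the bounds already recorded: $|S|\ge 2^{3k+7}n$, $|H|\ge 2^{3k+9}n$, $n/2^{k}\le|C|\le n$, and $|C|\ge n/2^{k}\ge 2^{2k+11}$. Case (i) is then immediate: if $Q$ is neither sparse nor dense then $\defi_H(S_Q)=\min\{|S\cap Q|,|Q\setminus S|\}>2^{k+1}n\ge 2^{k+1}|C|$, so Lemma \ref{thegoodincrease} applied to $C,S_Q$ gives $c\in C$ with $\la_{S_Q}(c)\ge(1-2^{1-k})|C|$, whence $\la_S(c)\ge\la_{S_Q}(c)\ge(1-2^{1-k})|C|$.

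For case (ii), if some sparse coset $Q$ has $|S\cap Q|\ge 2^{k+1}|C|$ then also $|Q\setminus S|=|H|-|S\cap Q|\ge 2^{3k+9}n-2^{k+1}n\ge 2^{k+1}|C|$, so $\defi_H(S_Q)\ge 2^{k+1}|C|$ and Lemma \ref{thegoodincrease} finishes as before. Otherwise select exactly $m=2^{2k+5}$ sparse-but-not-very-sparse cosets $Q_1,\dots,Q_m$ and put $\sigma=\sum_{i=1}^m|S_{Q_i}|$; since each $|S_{Q_i}|$ lies in $(|C|/2,\,2^{k+1}n]$ we get $2^{k+1}|C|\le 2^{2k+4}|C|\le\sigma\le 2^{3k+6}n\le|H|/8$. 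Suppose, for contradiction, that $\la_S(c)<(1-2^{1-k})|C|$ for all $c\in C$; this bound then holds for all $c\in C^{*}=C\cup(-C)\cup\{0\}$, so by subadditivity $\la_S(d)<r(1-2^{1-k})|C|$ for all $d\in rC^{*}$, and hence, writing $\tilde\rho(d)=\sum_{i=1}^m\rho_{S_{Q_i}}(d)=\sigma-\sum_i\la_{S_{Q_i}}(d)$, we have $\tilde\rho(d)\ge\sigma-\la_S(d)>\sigma-r(1-2^{1-k})|C|$ for all $d\in rC^{*}$. Now I rerun the integration in the proof of Lemma \ref{thegoodincrease} verbatim, with $|S|$ replaced by $\sigma$, $\rho$ by $\tilde\rho$, and $D_t$ by $\{d\in H:\tilde\rho(d)\ge t\}$: the lower bounds $|D_t|\ge|rC^{*}|\ge 2(1-2^{-(k+2)})r|C|$ come from Lemma \ref{rc}, and the conditions $\sigma\le|H|/8$ and $\sigma\ge 2^{k+1}|C|$ play the roles that $|S|\le|H|/2$ and $\defi_H(S)\ge 2^{k+1}|C|$ play there. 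This yields $\sum_{d\in H}\tilde\rho(d)>\sigma^{2}$; but $\sum_{d\in H}\tilde\rho(d)=\sum_{i=1}^m|S_{Q_i}|^{2}\le\big(\sum_{i=1}^m|S_{Q_i}|\big)^{2}=\sigma^{2}$, a contradiction, so some $c\in C$ has $\la_S(c)\ge(1-2^{1-k})|C|$.

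For case (iii), if some coset is neither sparse nor dense we are back in case (i); so assume every coset is sparse. Then $|S|=\sum_Q|S\cap Q|\le 2^{k+1}n\cdot\big|\{Q:S\cap Q\ne\phi\}\big|$, so at least $|S|/(2^{k+1}n)\ge 2^{2k+6}$ cosets meet $S$. Write $|S|=\sigma'+\sigma''$, where $\sigma'$ (resp.\ $\sigma''$) sums $|S\cap Q|$ over the cosets that are sparse-but-not-very-sparse (resp.\ very sparse). If $\sigma'\ge|S|/2$ there are at least $\sigma'/(2^{k+1}n)\ge 2^{2k+5}$ sparse-but-not-very-sparse cosets, i.e.\ hypothesis (ii) holds and we are done by the previous paragraph. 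Otherwise $\sigma''>|S|/2\ge 2^{3k+6}|C|$, and for each very sparse coset $Q$ we have $\defi_H(S_Q)=|S\cap Q|\le|C|/2$, so Lemma \ref{ssmall} gives $\sum_{c\in C}\la_{S_Q}(c)\ge\tfrac12|C|\,|S\cap Q|$; summing over very sparse cosets, $\sum_{c\in C}\la_S(c)\ge\sum_{c\in C}\sum_{Q\ \text{v.s.}}\la_{S_Q}(c)\ge\tfrac12|C|\,\sigma''$, and averaging over $c\in C$ yields some $c$ with $\la_S(c)\ge\tfrac12\sigma''\ge 2^{3k+5}|C|\ge(1-2^{1-k})|C|$.

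I expect case (ii) to be the crux: one has to carry out the delicate integration of Lemma \ref{thegoodincrease} not for a single set but for the ``vectorised'' count $\tilde\rho=\sum_i\rho_{S_{Q_i}}$ over many cosets simultaneously, and the choice of exactly $2^{2k+5}$ cosets is precisely what makes $\sigma=\sum_i|S_{Q_i}|$ both large enough for that argument to run ($\sigma\ge 2^{k+1}|C|$) and small enough ($\sigma\le|H|/8$) that the edge cases of Lemma \ref{rc} are handled exactly as in the original, so that $\sum_i|S_{Q_i}|^{2}\le\sigma^{2}<\sum_d\tilde\rho(d)$ is a genuine contradiction. Setting up the coset bookkeeping correctly — above all the identity $\la_S(c)=\sum_Q\la_{S_Q}(c)$ valid for $c\in H$, and the reductions that push cases (i) and (iii) into the range where the relevant lemma applies — also requires care, but once it is in place cases (i) and (iii) are short.
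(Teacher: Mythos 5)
Your decomposition and the arguments in cases (i) and (iii) match the paper's closely (in (iii) the paper just bounds the sparse-but-not-very-sparse contribution directly by $2^{2k+5}\cdot 2^{k+1}n=2^{3k+6}n$ rather than splitting on whether $\sigma'\ge|S|/2$, but this is cosmetic). The genuinely different step is case (ii). You rerun the full Lemma~\ref{thegoodincrease} integration with the aggregated count $\tilde\rho=\sum_i\rho_{S_{Q_i}}$ and total mass $\sigma=\sum_i|S_{Q_i}|$, obtaining $\sum_d\tilde\rho(d)>\sigma^2$ and contradicting the trivial $\sum_d\tilde\rho(d)=\sum_i|S_{Q_i}|^2\le\sigma^2$; you correctly identify that the choice of exactly $2^{2k+5}$ cosets gives $2^{2k+4}|C|\le\sigma\le 2^{3k+6}n\le|H|/8$, putting $\sigma$ in the range where that integration is valid. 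The paper instead argues in the short style of Lemma~\ref{slarge}: it works with $\bar S=\bigcup_i S_i$, picks a single $r=2^{2k+3}$, proves by Kneser and pigeonhole (as in Lemma~\ref{rc}, but with $C\cup\{0\}$ rather than $C^{*}$) that $|2^{2k+3}(C\cup\{0\})|\ge 2^{2k+2}|C|\ge 2^{k+2}n$, deduces $\bar\rho(d)>|\bar S|/2$ there, hence $\sum_d\bar\rho(d)>2^{k+1}n|\bar S|$, and then uses sparseness ($|S_i|\le 2^{k+1}n$) coset-by-coset to conclude $\sum_d\rho_i(d)>2^{k+1}n|S_i|\ge|S_i|^{2}$ for some $i$, a contradiction. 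Both are correct. The paper's argument is shorter and exploits the per-coset sparse upper bound at the contradiction step; yours is heavier but recycles Lemma~\ref{thegoodincrease} intact and needs only the crude $\sum a_i^{2}\le(\sum a_i)^{2}$ at the end, at the price of verifying the analogues of $\defi_H(S)\ge 2^{k+1}|C|$ and $|S|\le|H|/2$ for $\sigma$. One small remark: your preliminary sub-case on whether some sparse coset has $|S\cap Q|\ge 2^{k+1}|C|$ is superfluous — the selection of $2^{2k+5}$ cosets and the bounds on $\sigma$ go through regardless.
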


\begin{proof} (i) Suppose $Q$ is neither sparse nor dense, then we have $\defi_{Q}(S)\ge 2^{k+1}n\ge 2^{k+1}|C|$, an application of Lemma \ref{thegoodincrease} to a shift of $S\cap Q$ then shows that there exists $c\in C$ with $\la_{S\cap Q}(c)\ge (1-2^{1-k})|C|$, and we are done, as $\la_{S}(c)\ge \la_{S\cap Q}(c)$. 
\\ \indent (ii) Suppose that $Q_{1},...,Q_{2^{2k+5}}$ are sparse but not very sparse, for each $i=1,...,2^{2k+5}$ let $S_{i}=S\cap Q_{i}$ and let $\bar{S}=\bigcup_{i=1}^{2^{2k+5}}S_{i}$, then, \begin{equation*}\la(c)=\la_{S}(c)\ge \la_{\bar{S}}(c)=\sum_{i=1}^{2^{2k+5}}\la_{S_{i}}(c) \end{equation*}   We write $\la_{i}(c)$ for $\la_{S_{i}}(c)$ and $\bar{\la}(c)$ for $\la_{\bar{S}}(c)$, we shall show that,\begin{equation*} \bar{\la}(c)= \sum_{i=1}^{2^{2k+5}}\la_{i}(c)\ge |C|\qquad \text{for some}\,\, c\in C\end{equation*} We first note that $|\bar{S}|\ge 2^{2k+5}|C|/2\ge 2^{2k+4}|C|$.  We work as in the proofs of results in Section \ref{prel}.   Suppose for contradiction that $\bar{\la}(c)<|C|$ for all $c\in C$, then by the subadditivity of $\bar{\la}$ (which follows from the subadditivity of the $\la_{i}$), we have that $\bar{\la}(d)<2^{2k+3}|C|$ for all $d\in 2^{2k+3}(C\cup \{0\})$.  For each $i$ we let $\rho_{i}(d)=|(S_{i}+d)\cap S_{i}|$, so that, $\rho_{i}(d)=|S_{i}|-\la_{i}(d)$ and, $\sum_{d\in G} \la_{i}(d)=|S_{i}|^{2}$.  We let $\bar{\rho}(d)=\sum_{i}\rho_{i}(d)$, it follows that, $\bar{\rho}(d)=|\bar{S}|-\bar{\la}(d)$.  So that, \begin{equation*} \bar{\rho}(d)>|\bar{S}|- 2^{2k+3}|C|\ge |\bar{S}|/2\qquad\text{for all}\,\,\, d\in 2^{2k+3}(C\cup \{0\}) \end{equation*} the final inequality follows from the bound $|\bar{S}|\ge 2^{2k+4}|C|$ obtained earlier. \\ \indent We shall reach a contradiction by putting a lower bound on $|2^{2k+3}(C\cup\{0\})|$, and using this to deduce that $\sum_{d\in G}\rho_{i}(d)>|S_{i}|^{2}$ for some $i$, a contradiction.  We show that $|2^{2k+3}(C\cup\{0\})|\ge 2^{k+2}n$.  Let $K$ be the stabiliser of $2^{2k+3}(C\cup\{0\})$, if $C\subset K$ then $K=H$ and $|K|= |H|\ge 2^{k+2}n$.  Hence we may assume $C\cup \{0\}$ meets a non-trivial coset of $K$, let $R_{1},...,R_{p}$ be the non-trivial cosets of $K$ which have non-empty intersection with $C$, then the elements of $2^{2k+3}(C\cup \{0\})$ are exactly the elements of the cosets in $2^{2k+3}\{K,R_{1},...,R_{p}\}$.  The definition of $K$ implies that $2^{2k+3}\{K,R_{1},...,R_{p}\}$ has trivial stabiliser in $H/K$, so that an application of Kneser's theorem yields, \begin{equation*} |2^{2k+3}(C\cup \{0\})|=|K| |2^{2k+3}\{H,R_{1},...,R_{p}\}|\ge |K| \big(2^{2k+3}(p+1)-(2^{2k+3}-1)\big)\ge  2^{2k+3}p|K|\end{equation*} An application of the pigeon hole principle shows that, $|K|\ge |C|/(p+1)\ge |C|/2p$, it is now immediate that $|2^{2k+3}(C\cup\{0\})|\ge 2^{2k+2}|C|\ge 2^{k+2}n$.  We now obtain the required contradiction, as we have, \begin{equation*} \sum_{d\in 2^{2k+3}(C\cup\{0\})} \bar{\rho}(d)>2^{k+2}n\, \frac{|\bar{S}|}{2}  =2^{k+1}n|\bar{S}|\end{equation*} and so, using that $\bar{\rho}=\sum_{i} \rho_{i}$ and $|\bar{S}|=\sum_{i}|S_{i}|$, we must have for some $i$ that, \begin{equation*} \sum_{d\in 2^{2k+3}(C\cup\{0\})}\rho_{i}(d)>2^{k+1}n|S_{i}|\ge |S_{i}|^{2}\end{equation*} 
\\ \indent (iii) Suppose no coset is dense, now if some coset is not sparse we are done by (i), so we may assume all cosets are sparse.  If there are $2^{2k+5}$ or more cosets which are sparse but not very sparse, then we are done by (ii).  Hence we may assume there are at most $2^{2k+5}$ cosets which are sparse but not very sparse, since $|S\cap Q|\le 2^{k+1}n$ for all sparse cosets we may assume at most $2^{3k+6}$ elements of $S$ are in cosets that are sparse but not very sparse.  By (\ref{Sbig}) we have $|S|\ge 2^{3k+7}n$ so at least $2^{3k+6}n\ge 2n$ elements of $S$ belong to very sparse cosets.  Applying Lemma \ref{ssmall} to each very sparse coset, and averaging appropriately, we deduce that there exists $c\in C$ with \begin{equation*}\la(c)\ge \frac{1}{2}\sum_{Q}\defi_{Q}(S)=\frac{1}{2}\sum_{Q}|S\cap Q|\ge n\end{equation*} where the summation is taken over very sparse cosets $Q$, and so we are done.\end{proof}  

A very simple addition theorem in finite abelian groups is that if $A,B\subset G$ satisfy $|A|+|B|>|G|$ then $A+B=G$.  Let $\mathcal{D}$ denote the set of dense cosets, \begin{equation*} \mathcal{D}=\{Q\in G/H \,:\, Q \,\text{is dense}\} \, \subset G/H\end{equation*}  Let $\bar{\mathcal{D}}$ denote the set of all elements of dense cosets, \begin{equation*} \bar{\mathcal{D}}=\bigcup_{Q\in \mathcal{D}}Q \, \subset G \end{equation*}

\begin{claim} $S$ is not contained in $\bar{\mathcal{D}}$. \end{claim}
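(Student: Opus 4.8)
The plan is to argue by contradiction: assuming $S\subseteq\bar{\mathcal{D}}$, I will show that $\Sigma(A)$ coincides with the $H$-invariant set $\bar{\mathcal{D}}$, which contradicts the hypothesis that $\Sigma(A)$ has trivial stabiliser.

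The first step is to show that \emph{every dense coset is contained in $\Sigma(A)$}. Fix a dense coset $Q$, pick $q\in Q$ and write $S\cap Q=q+S_{Q}$ with $S_{Q}\subseteq H$. Since $\Sigma(A)=\Sigma(B)+\Sigma(C)=S+\Sigma(C)$ and $\Sigma(C)\subseteq\langle C\rangle=H$, we have $(S\cap Q)+\Sigma(C)=q+\big(S_{Q}+\Sigma(C)\big)\subseteq\Sigma(A)$, where now $S_{Q}$ and $\Sigma(C)$ are both subsets of $H$. By the definition of dense, $|H\setminus S_{Q}|=|Q\setminus S|\le 2^{k+1}n$, while the lower bound on $|\Sigma(C)|$ recorded just before the statement of the Claim gives $|\Sigma(C)|\ge 2^{3k+9}n>2^{k+1}n$. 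Hence $|S_{Q}|+|\Sigma(C)|>|H|$, so by the simple addition theorem recalled above $S_{Q}+\Sigma(C)=H$, whence $(S\cap Q)+\Sigma(C)=Q$ and therefore $Q\subseteq\Sigma(A)$. Taking the union over dense cosets gives $\bar{\mathcal{D}}\subseteq\Sigma(A)$.

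Now suppose, for contradiction, that $S\subseteq\bar{\mathcal{D}}$. The set $\bar{\mathcal{D}}$ is a union of cosets of $H$, hence is $H$-invariant; combining this with $\Sigma(C)\subseteq H$ we get $\Sigma(A)=S+\Sigma(C)\subseteq\bar{\mathcal{D}}+H=\bar{\mathcal{D}}$. Together with the inclusion $\bar{\mathcal{D}}\subseteq\Sigma(A)$ from the previous step this forces $\Sigma(A)=\bar{\mathcal{D}}$. Since $\bar{\mathcal{D}}$ is $H$-invariant we conclude $H\subseteq\stab(\Sigma(A))$; but $C$ is non-empty and contained in $G\setminus\{0\}$, so $H=\langle C\rangle\neq\{0\}$, contradicting the hypothesis that $\Sigma(A)$ has trivial stabiliser. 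Hence $S\not\subseteq\bar{\mathcal{D}}$.

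This is a short deduction rather than a difficult argument; the only point requiring any care is the first step, namely checking that $(S\cap Q)+\Sigma(C)$ fills the whole coset $Q$, and this reduces to the estimate $|\Sigma(C)|>2^{k+1}n$ already available from Theorem~\ref{dvgms} together with the size hypothesis $|A|\ge n_{k}$ (which forces $|C|\ge n/2^{k}$ to be large).
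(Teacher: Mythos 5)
Your proposal is correct and follows essentially the same route as the paper: first show, via the simple addition theorem and the lower bound on $|\Sigma(C)|$, that every dense coset is contained in $\Sigma(A)$; then observe that if $S\subseteq\bar{\mathcal{D}}$, the identity $\Sigma(A)=S+\Sigma(C)$ with $\Sigma(C)\subseteq H$ forces $\Sigma(A)$ to be a union of cosets of $H$, contradicting the trivial-stabiliser hypothesis. The only cosmetic difference is that you derive the exact equality $\Sigma(A)=\bar{\mathcal{D}}$, while the paper stops at noting that $\Sigma(A)$ is a union of $H$-cosets; both give the same contradiction.
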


\begin{proof} We have from Theorem \ref{dvgms} that, $|\Sigma(C)|\ge |C|^{2}/64 \ge n^{2}/2^{2k+6}> 2^{k+1}n$.  Let $Q$ be a dense coset, then $|S\cap Q|\ge |H|-2^{k+1}n$, and so $|\Sigma(C)|+|S\cap Q|>|H|$ so by considering an appropriate shift of $S\cap Q$ we obtain (from the simple addition theorem stated above) that $Q\subset \Sigma(C)+(S\cap Q)\subset \Sigma (A)$.  Suppose the Claim is false, then every coset $Q$ meeting $S$ is dense, then it follows that $\Sigma(A)$ is a union of cosets of $H$ and so has non-trivial stabiliser, a contradiction.\end{proof}

If either of the conditions (i) or (iii) of Lemma \ref{la} hold then we are done, using this together with the above Claim, we note that we may proceed with the following assumptions,

 I\quad  \,\,\, Every coset is either sparse or dense\\ \indent
II \quad There is a dense coset\\ \indent 
III \quad \!\!\!\! $S$ is not contained in $\bar{\mathcal{D}}$

From this information we shall deduce that $|S|\ge \alpha_{k}n^{2}$, completing the proof.  To prove this it suffices by (\ref{SQbig}) to show that there are at least $2^{2k+1}$ dense cosets, ie. show $|\mathcal{D}|\ge 2^{2k+1}$.  We begin by recalling a Claim from \cite{dvgms}.

\begin{claim}\cite{dvgms} If $Q$ is dense and $b\in B$ then one of the cosets $Q+b$ or $Q-b$ is dense.\end{claim}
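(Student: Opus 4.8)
The plan is to exploit that $b\in B$ (and not merely that $b\in A$): writing $B'=B\setminus\{b\}$ and $S'=\Sigma(B')$, we have $\Sigma(B)=\Sigma(B')+\{0,b\}$, i.e. $S=S'\cup(S'+b)$. The useful consequence is that both $S'\subset S$ and $S'+b\subset S$, so translating $S$ by $\pm b$ cannot push too much of $S$ outside of itself.

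First I would fix the dense coset $Q$ and split $S\cap Q$ according to this decomposition. If $x\in S\cap Q$ lies in $S'$, then $x+b\in S'+b\subset S$, and since $x\in Q$ we get $x+b\in S\cap(Q+b)$. If instead $x\in (S\cap Q)\setminus S'$, then $x\in S'+b$, so $x-b\in S'\subset S$, and since $x\in Q$ we get $x-b\in S\cap(Q-b)$. These two cases partition $S\cap Q$, and the maps $x\mapsto x+b$ and $x\mapsto x-b$ are injective, so
\[
|S\cap Q|\le |S\cap(Q+b)|+|S\cap(Q-b)|.
\]

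Next I would use that $Q$ is dense, so $|S\cap Q|\ge |H|-2^{k+1}n$, which by the bound $|H|\ge|\Sigma(C)|\ge|C|^{2}/64\ge 2^{3k+9}n$ recorded above is far larger than $2^{k+2}n$ (recall $k\ge 10$). Hence at least one of $|S\cap(Q+b)|$, $|S\cap(Q-b)|$ exceeds $2^{k+1}n$, so that coset, $Q+b$ or $Q-b$, is not sparse; by assumption I it is then dense, which is the claim. (If $2b\in H$ so that $Q+b=Q-b$, the same estimate gives $|S\cap(Q+b)|\ge|S\cap Q|/2>2^{k+1}n$ and the conclusion still holds, so this degenerate case needs no separate treatment.)

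I do not expect a real obstacle here: the only points requiring care are checking that the two cases genuinely partition $S\cap Q$ — which uses nothing beyond $S=S'\cup(S'+b)$ — and the numerical inequality $|H|>3\cdot 2^{k+1}n$, which is immediate from the stated lower bounds on $|H|$. The conceptual heart of the matter is just the identity $S=S'\cup(S'+b)$ forced by $b\in B$, together with the crude fact that a dense coset is almost all of $H$ while a sparse one contains only a negligible part of $S$.
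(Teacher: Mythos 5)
Your argument is correct and is essentially the paper's own proof: the paper defines $S_{-}$ and $S_{+}$ as the elements of $S\cap Q$ expressible without $b$ and as $b$ plus something from $B\setminus\{b\}$ respectively, which is exactly your split of $S\cap Q$ via $S=S'\cup(S'+b)$, and then uses the same translations $+b$, $-b$ and the same size estimate to conclude that one of $Q+b$, $Q-b$ is not sparse. Your version is slightly tidier in making the split a genuine partition and in spelling out the numerical comparison $|H|-2^{k+1}n>2^{k+2}n$, but the substance is identical.
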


\begin{proof} If $b\in H$ then this is trivial.  So suppose $b\not \in H$, let $S_{-}$ be the set of elements in $S\cap Q$ which may be represented as a sum of elements of $B\setminus \{b\}$, and let $S_{+}$ be the set of elements in $S\cap Q$ which may be represented as $b$ plus a sum of elements of $B\setminus \{b\}$.  We note that $S\cap (Q+b)\supset S_{-}+b$ so that $|S\cap (Q+b)|\ge |S_{-}|$ and $S\cap (Q-b)\supset S_{+}-b$ so that $|S\cap (Q-b)|\ge |S_{+}|$.  As $|S_{-}|+|S_{+}|\ge |H|-2^{k+1}n> 2^{k+1}n$ we deduce that one of these sets and hence one of the sets $S\cap (Q+b),S\cap (Q-b)$ has cardinality greater than $2^{k+1}n$ the claim is then proved as we may assume all cosets are either sparse or dense.\end{proof}

Fix $Q_{0}\in \mathcal{D}$ and let $K$ be a maximal subgroup of $G/H$ for the property that $Q_{0}+K\subset \mathcal{D}$.  Let us also define a subgroup $\bar{K}$ of $G$ by, \begin{equation*} \bar{K}=\bigcup_{Q\in K}Q\end{equation*}

\begin{lemma} If $|B\cap \bar{K}|\ge 9|B|/10$ then $|S|\ge \alpha_{k}n^{2}$\end{lemma}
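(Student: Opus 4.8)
The plan is to force $|S|$ to be large by showing that $S$ fills up almost all of a large coset of $\bar{K}$. First I would dispose of the easy alternative $|K|\ge 2^{2k+1}$: then $Q_{0}+K$ already supplies $2^{2k+1}$ distinct dense cosets, so $|\mathcal{D}|\ge 2^{2k+1}$ and $|S|\ge\alpha_{k}n^{2}$ follows at once from (\ref{SQbig}). So assume $|K|<2^{2k+1}$. Now the cosets of $H$ lying in $Q_{0}+K$ have union a single coset $q_{0}+\bar{K}$ of $\bar{K}$, consisting of $|K|$ dense cosets of $H$; since a dense coset $Q$ has $|S\cap Q|\ge|H|-2^{k+1}n$ and $|H|\ge 2^{3k+9}n$, summing over these $|K|$ cosets gives
\begin{equation*}
|S|\ \ge\ |S\cap(q_{0}+\bar{K})|\ \ge\ |K|\bigl(|H|-2^{k+1}n\bigr)\ \ge\ \bigl(1-2^{-2k-8}\bigr)\,|\bar{K}|.
\end{equation*}
Hence it is enough to show that $|\bar{K}|$ is (essentially) at least $\alpha_{k}n^{2}$.

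This is where the hypothesis $|B\cap\bar{K}|\ge 9|B|/10$ is used. Since $H\le\bar{K}$ we have $C\subset\bar{K}$, so $A\cap\bar{K}\supset(B\cap\bar{K})\cup C$ and therefore $|A\cap\bar{K}|\ge\tfrac{9}{10}|B|+|C|=n-\tfrac{t}{10}\ge\tfrac{9}{10}n$. The set $A\cap\bar{K}$ satisfies the hypotheses of Theorem \ref{main} relative to the group $\bar{K}$ (it avoids $0$, is disjoint from its negative, and $\Sigma(A\cap\bar{K})$, a subset of $\Sigma(A)$, has trivial stabiliser), and $|A\cap\bar{K}|\ge\tfrac{9}{10}n\ge n_{k-1}$, so the induction hypothesis gives $|\bar{K}|\ge|\Sigma(A\cap\bar{K})|\ge\alpha_{k-1}(n-t/10)^{2}$. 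I would reinforce this with two extra inputs: the set $\Sigma(B\cap\bar{K})\subset\bar{K}$ is itself a subset of $S$, and when $q_{0}\notin\bar{K}$ it lies in a coset of $\bar{K}$ disjoint from $q_{0}+\bar{K}$, so it contributes a further $|\Sigma(B\cap\bar{K})|\ge\alpha_{k-1}(\tfrac{9}{10}t)^{2}$ to the count (the residual case $q_{0}\in\bar{K}$, $B\subset\bar{K}$ being handled by applying the induction hypothesis directly to $A$ inside $\bar{K}$); and at this stage of the proof we always have $|S|\ge g(t)$, which on its own exceeds $\alpha_{k}n^{2}$ once $t$ is close enough to $n$.

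Assembling the three estimates gives, for every admissible $t$,
\begin{equation*}
|S|\ \ge\ \max\Bigl\{\,g(t)\,,\ \bigl(1-2^{-2k-8}\bigr)\alpha_{k-1}\Bigl(\bigl(n-\tfrac{t}{10}\bigr)^{2}+\bigl(\tfrac{9}{10}t\bigr)^{2}\Bigr)\Bigr\},
\end{equation*}
and the remaining task is to verify that the right-hand side is at least $\alpha_{k}n^{2}$ throughout the range $t\in\{\lfloor n/2^{k}\rfloor,\dots,\lceil n-n/2^{k}\rceil\}$, using $\alpha_{k}\le\tfrac{6}{5}\alpha_{k-1}$ and $n\ge n_{k}>2^{5k+15}$ to absorb the error terms. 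I expect this last verification to be the main obstacle: none of the three contributions (the near-covering of $q_{0}+\bar{K}$, the bound on $|\Sigma(A\cap\bar{K})|$, or $g(t)$) suffices alone, and the fraction $9/10$ in the hypothesis must be precisely strong enough that the regimes "$t$ small" (where $g(t)$ is useless but $(n-t/10)^{2}$ is near $n^{2}$), "$t$ intermediate" (where all three help) and "$t$ large" (where $g(t)$ alone wins) overlap — so the delicate point is the interplay of the estimates and the constant-chasing rather than any single new idea.
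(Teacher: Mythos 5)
Your first estimate (near-covering of $q_0+\bar{K}$, giving $|S|\ge(1-2^{-2k-8})|\bar{K}|$) and your use of the induction hypothesis on $A\cap\bar{K}$ are both correct, and you correctly sense that this alone gives only $|S|\gtrsim\alpha_{k-1}n^2$, which falls short of $\alpha_k n^2$ by the factor $6/5$ built into the recursion $\alpha_k\le\frac{6}{5}\alpha_{k-1}$. The trouble is that your two proposed reinforcements do not close this gap. Take $t\approx n/2$ and $k$ such that $\alpha_k=\frac{6}{5}\alpha_{k-1}$ with $\alpha_{k-1}$ around $0.3$--$0.4$ (this regime occurs for $k$ around $25$). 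Then your coset count gives roughly $\alpha_{k-1}\big((1-\tfrac{1}{20})^2+(\tfrac{9}{20})^2\big)n^2\approx 1.1\,\alpha_{k-1}n^2$, which is below the target $1.2\,\alpha_{k-1}n^2$; and $g(t)\approx\big(\tfrac{t}{n}-\tfrac{t^2}{2n^2}\big)n^2\approx 0.375\,n^2$, which is below $\alpha_k n^2\approx 0.42\,n^2$. So the maximum of your three bounds is genuinely smaller than $\alpha_k n^2$ throughout a large middle range of $t$; the constant-chasing you hoped would work does not.

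The ingredient you are missing is the one the paper uses to gain the factor $3/2$: assumption III forces $B\not\subset\bar{K}$ (if $B\subset\bar{K}$ then $S\subset\bar{K}\subset\bar{\mathcal{D}}$), so one may pick $b\in B\setminus\bar{K}$. By the Claim, for every $Q\in Q_0+K$ one of $Q\pm b$ is dense; since $b\notin\bar{K}$ these translated cosets lie outside $Q_0+K$, and since $Q\mapsto Q+b$ and $Q\mapsto Q-b$ are injective one finds at least $|K|/2$ dense cosets outside $Q_0+K$, for a total of at least $\frac{3}{2}|K|$. Each dense coset contributes at least $\frac{81\alpha_{k-1}n^2}{100|K|}-2^{k+1}n\ge\frac{4\alpha_{k-1}n^2}{5|K|}$ (using $|\bar{K}|=|K||H|$ and $|K|\le 2^{2k+1}$), so $|S|\ge\frac{3|K|}{2}\cdot\frac{4\alpha_{k-1}n^2}{5|K|}=\frac{6}{5}\alpha_{k-1}n^2\ge\alpha_k n^2$. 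This factor $3/2$ is not decorative: it is precisely what is needed to overcome the $6/5$ ratio in the definition of $\alpha_k$, and there is no way to recover it from $\Sigma(B\cap\bar{K})$ or from $g(t)$. Without invoking $b\in B\setminus\bar{K}$ and the translation Claim, the argument cannot close.
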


\begin{proof} We know that $B$ is not contained in $\bar{K}$, for if it were then we would have $S\subset \bar{K}\subset \bar{\mathcal{D}}$, contradicting III.  Let $b\in B\setminus \bar{K}$, we have from the above Claim that for each $Q\in Q_{0}+K$ that either $Q+b$ or $Q-b$ is dense.  This implies that there must be at least $|K|/2$ dense cosets outside of $Q_{0}+K$, so that the total number of dense cosets is at least $3|K|/2$.  Let us now estimate the size of $\bar{K}$.  Since $C\subset \bar{K}$ and $|B\cap \bar{K}|\ge 9|B|/10$ we deduce that $|A\cap \bar{K}|\ge 9|A|/10$.   Since $A'=A\cap \bar{K}$ is a subset of $A$, we have that $\Sigma(A')$ has trivial stabiliser, so that by applying the induction hypothesis of Theorem \ref{main} to $A'$ we obtain that $|\Sigma(A')|\ge 81\alpha_{k-1}n^{2}/100$, and since $\Sigma (A')\subset \bar{K}$ it follows that $|\bar{K}|\ge 81\alpha_{k-1}n^{2}/100$.  Since $|\bar{K}|=|K||H|$, it follows that $|H|\ge 81\alpha_{k-1}n^{2}/100|K|$.  We may assume that $|K|\le 2^{2k+1}$ since we are done if there are at least $2^{2k+1}$ dense cosets.  Let $Q$ be a dense coset then, \begin{equation*} |S\cap Q|\ge |H|-2^{k+1}n\ge \frac{81\alpha_{k-1}n^{2}}{100|K|}-2^{k+1}n\ge \frac{4\alpha_{k-1}n^{2}}{5|K|}\end{equation*} where the final inequality follows from $\alpha_{k-1}n/100|K|\ge n/2^{2k+15}\ge 2^{k}$.  Since there are at least $3|K|/2$ dense cosets $Q$ we have that, \begin{equation*} |S|\ge \frac{3|K|}{2}\frac{4\alpha_{k-1}n^{2}}{5|K|}=\frac{6\alpha_{k-1}n^{2}}{5}\ge \alpha_{k} n^{2}\vspace{-0.6cm} \end{equation*}\end{proof}

Hence we may assume that, \begin{equation}\label{out} |B\setminus \bar{K}|\ge \frac{|B|}{10}\ge \frac{n}{2^{k+4}}\ge 2^{4k+3}\end{equation}  Suppose that $B\setminus \bar{K}$ meets at least $2^{2k+2}$ distinct cosets of $H$, then it is possible to find cosets $Q_{1},...,Q_{2^{2k+1}}$ not in $\bar{K}$ which meet $B$ and have the property that $Q_{i}\neq -Q_{j}$ for each $i,j$.  From the Claim we have that for each $i=1,...,2^{2k+1}$ that one of the cosets $Q_{0}+Q_{i}$ or $Q_{0}-Q_{i}$ is dense.  Since we find a different dense coset for each $i=1,...,2^{2k+1}$ we find at least $2^{2k+1}$ dense cosets and we are done.

Hence we may assume that $B$ meets at most $2^{2k+2}$ cosets $Q$ of $H$ with $Q$ not in $\bar{K}$.  It follows from this, and (\ref{out}), that for one such coset $Q$ we have $|B\cap Q|\ge 2^{2k+1}$.  We write $\order(Q)$ for the order of $Q$ in $G/H$, the following lemma will allow us to deduce that $\order(Q)\ge 2^{2k+1}$.

\begin{lemma} Let $Q$ be a coset with $|B\cap Q|\ge \order(Q)$ and $\order(Q)\le 2^{2k+1}$, and let $R$ be a dense coset, then $R+Q$ is dense\end{lemma}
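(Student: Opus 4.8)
The plan is to move the (large) set $R\cap S$, where I write $S=\Sigma(B)$, into the coset $R+Q$ by appending elements of $B\cap Q$ to subset‑sum representations, losing only a bounded multiplicative factor; since under the standing assumption that every coset is sparse or dense it suffices to show that $R+Q$ is \emph{not sparse}, a crude factor can be absorbed. Write $d=\order(Q)$. The case $d=1$ (i.e.\ $Q=H$) is trivial since then $R+Q=R$, so assume $d\ge 2$. Using the hypothesis $|B\cap Q|\ge d$, I would fix distinct $b_1,\dots,b_d\in B\cap Q$, set $B_0=B\setminus\{b_1,\dots,b_d\}$ and $P=b_1+\dots+b_d$, and note that $P$ lies in the coset $dQ$, which is the trivial coset because $\order(Q)=d$; hence $P\in H$.

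The first real step is the covering identity
\begin{equation*}
S=\Big(\bigcup_{i=1}^{d}\Sigma(B\setminus\{b_i\})\Big)\ \cup\ \big(P+\Sigma(B_0)\big).
\end{equation*}
Here $\supseteq$ is immediate, and for $\subseteq$ one takes $z\in S$ with a representation $z=\sum_{a\in T}a$ ($T\subseteq B$): if $b_i\notin T$ for some $i$ then $z\in\Sigma(B\setminus\{b_i\})$, and otherwise $T\supseteq\{b_1,\dots,b_d\}$, so $z\in P+\Sigma(B_0)$. Intersecting with $R$ exhibits $R\cap S$ as a union of $d+1$ sets, and the point is that each of these embeds into $(R+Q)\cap S$ by a single translation: for $i\le d$ the map $z\mapsto z+b_i$ sends $R\cap\Sigma(B\setminus\{b_i\})$ into $S$ (extend the representation by $b_i$) and into the coset $R+Q$; and $z\mapsto z+(b_1-P)$ sends $R\cap(P+\Sigma(B_0))$ into $b_1+\Sigma(B_0)\subseteq S$, and into $R+Q$ because $P\in H$. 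Summing the sizes of these $d+1$ pieces and using injectivity then gives $|R\cap S|\le (d+1)\,|(R+Q)\cap S|$.

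To conclude I would feed in the estimates already available. Density of $R$ gives $|R\cap S|\ge |H|-2^{k+1}n$, and $d+1\le 2^{2k+1}+1\le 2^{2k+2}$, so, using $|H|\ge 2^{3k+9}n$,
\begin{equation*}
|(R+Q)\cap S|\ \ge\ \frac{|H|-2^{k+1}n}{2^{2k+2}}\ \ge\ \frac{2^{3k+8}n}{2^{2k+2}}\ =\ 2^{k+6}n\ >\ 2^{k+1}n,
\end{equation*}
so $R+Q$ is not sparse, and hence by Assumption I it is dense. The step I expect to be the crux is controlling the non‑injectivity of the transfer, and this is exactly where the hypothesis $|B\cap Q|\ge\order(Q)$ is used: an element of $S$ every representation of which is forced to use all of $b_1,\dots,b_d$ cannot be pushed into $R+Q$ by appending one further $b_i$, but the set of such elements is precisely $P+\Sigma(B_0)$ — a single translate (a shift of $\Sigma(B_0)$ by the coset‑$dQ=H$ element $P$) — so the number of pieces stays at $d+1\le 2^{2k+2}$, a loss that the bound $|H|\ge 2^{3k+9}n$ comfortably tolerates.
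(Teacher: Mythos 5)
Your proof is correct and reaches the required conclusion, and it shares with the paper's proof the core idea: use elements of $B\cap Q$ to translate pieces of $S\cap R$ into $S\cap (R+Q)$, keeping track of which $b_i$'s appear in a given subset-sum representation. Where you diverge is the bookkeeping. The paper takes $p-1$ elements of $B\cap Q$ (with $p=\order(Q)$), \emph{partitions} $S\cap R$ into the elements $S^{+}$ that have a representation using all $p-1$ of them (pushed into $R+Q$ by one translation, since $(p-1)Q=-Q$) and the elements $S^{-}$ that do not (handled via a bipartite graph whose $S\cap(R+Q)$-side degrees are bounded by $p-1$), yielding $|S\cap(R+Q)|\ge|S\cap R|/(2p)$. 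You instead take all $d=\order(Q)$ elements and write $S$ as a $(d+1)$-piece \emph{cover} $\bigcup_i\Sigma(B\setminus\{b_i\})\cup(P+\Sigma(B_0))$, with each piece mapping into $S\cap(R+Q)$ by a single honest translation; the twist that $P=b_1+\cdots+b_d\in H$, so that $b_1-P\in Q$, lets you move the ``all used'' piece too, giving $|S\cap(R+Q)|\ge|S\cap R|/(d+1)$. Your route avoids the graph-theoretic multiplicity count entirely (pure injections throughout) and even gives a marginally better constant, at the modest cost of needing $\order(Q)$ rather than $\order(Q)-1$ elements of $B\cap Q$ --- which the hypothesis $|B\cap Q|\ge\order(Q)$ provides in any case. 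The final numerical step, comparing against $2^{k+1}n$ using $|H|\ge 2^{3k+9}n$, is also sound and slightly more direct than the paper's appeal to the bound~(\ref{SQbig}).
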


\begin{proof} Let $p=\order (Q)\le 2^{2k+1}$ and let $b_{1},...,b_{p-1}$ be $p-1$ elements of $B\cap Q$.  Since $R$ is dense we have $|S\cap R|\ge |H|-2^{k+1}n$.  We partition $S\cap R$ into, \begin{equation*}S^{+} = \big((b_{1}+...+b_{p-1})+\Sigma (B\setminus \{b_{1},...,b_{p-1}\}) \big)\cap R\end{equation*} the elements which can be expressed as a sum of elements of $B$ in such a way that all the elements $b_{1},...,b_{p-1}$ are included, and $S^{-}$ the elements of $S\cap R$ which can be expressed as a sum of elements of $B$, in which not all of the elements $b_{1},...,b_{p-1}$ are used.  We note that $-(b_{1}+...+b_{p-1})\in Q$ so we have that, $S\cap (R+Q)\supset S^{+}-(b_{1}+...+b_{p-1})$ and so $|S\cap (R+Q)|\ge |S^{+}|$.  We will also be able to relate $|S\cap (R+Q)|$ to $|S^{-}|$.  Consider the bipartite graph with vertex sets $S^{-}$ and $S\cap (R+Q)$ in which a vertex $x\in S^{-}$ is joined to $y\in S\cap (R+Q)$ if $y-x=b_{i}$ for some $i=1,...,p-1$; the vertices of $S^{-}$ all have positive degree, while the degree of vertices in $S\cap (R+Q)$ is certainly at most $p-1$, it follows that $|S\cap (R+Q)|\ge |S^{-}|/(p-1)$.  We now have $2|S\cap (R+Q)|\ge |S^{+}|+|S^{-}|/p\ge (|S^{+}|+|S^{-}|)/p\ge |S\cap R|/p$, recall that $R$ is dense so that by (\ref{SQbig}) we have, $|S\cap R|\ge \alpha_{k}n^{2}/2^{2k+1}\ge n^{2}/2^{2k+7}$, using this together with the fact $p\le 2^{2k+1}$ we obtain, \begin{equation*} |S\cap (R+Q)|\ge \frac{|S\cap R|}{2p}\ge \frac{n^{2}}{2^{4k+9}}>2^{k+1}n \end{equation*} the final inequality follows from the bound $n\ge n_{k}\ge 2^{5k+10}$.  This shows us that $R+Q$ is not sparse, since we are assuming that all cosets are either sparse or dense it follows that $R+Q$ is dense.\end{proof}

So if it were the case that $\order(Q)\le 2^{2k+1}$ then the above lemma would show that $Q$ is in the stabiliser of $\mathcal{D}$, but now the subgroup $\langle K\cup \{Q\}\rangle$ contradicts the maximality of $K$.  Hence we may assume $\order(Q)\ge 2^{2k+1}$, our proof of the theorem is completed with this final lemma.

\begin{lemma} Suppose there is a coset $Q$ with $|B\cap Q|\ge 2^{2k+1}$ and $\order(Q)\ge 2^{2k+1}$, then there are at least $2^{2k+1}$ dense cosets.\end{lemma}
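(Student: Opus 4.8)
The plan is to fix one dense coset $Q_0$ (one exists by assumption II), to choose distinct elements $b_1,\dots,b_{2^{2k+1}-1}\in B\cap Q$ (possible since $|B\cap Q|\ge 2^{2k+1}$), and to prove the following dichotomy for an \emph{arbitrary} dense coset $R$: either $R+Q$ is dense, or $R-lQ$ is dense for every $l\in\{1,\dots,2^{2k+1}-1\}$. Granting this, the lemma follows by a short argument inside the cyclic subgroup $\langle Q\rangle\le G/H$. Suppose for contradiction there are fewer than $2^{2k+1}$ dense cosets. Then the second branch of the dichotomy can never occur, since $R,R-Q,\dots,R-(2^{2k+1}-1)Q$ are $2^{2k+1}$ distinct cosets (distinct because $\order(Q)\ge 2^{2k+1}$); hence $R+Q$ is dense whenever $R$ is. Starting from $Q_0$ and iterating, $Q_0+jQ$ is dense for every $j\ge 0$, so there are at least $\order(Q)\ge 2^{2k+1}$ dense cosets, a contradiction.

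To prove the dichotomy, fix $l\le 2^{2k+1}-1$ and partition $S\cap R=S_l^{+}\cup S_l^{-}$, where $S_l^{+}$ consists of those $x$ admitting a representation as a subset sum of $B$ that uses all of $b_1,\dots,b_l$, and $S_l^{-}$ is the rest (every representation omits some $b_i$ with $i\le l$). Since $b_1+\cdots+b_l\in lQ$, subtracting this element maps $S_l^{+}$ injectively into $S\cap(R-lQ)$, so $|S\cap(R-lQ)|\ge|S_l^{+}|$. For $S_l^{-}$ I would use the bipartite-graph device of the preceding lemmas: on the bipartite graph with parts $S_l^{-}$ and $S\cap(R+Q)$ and an edge $x\sim y$ whenever $y-x\in\{b_1,\dots,b_l\}$, every $x\in S_l^{-}$ has a neighbour (add to an omitting representation the omitted $b_i$, landing in $S\cap(R+Q)$), while every $y$ has at most $l$ neighbours; hence $|S\cap(R+Q)|\ge|S_l^{-}|/l$.

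Now $|S_l^{+}|+|S_l^{-}|=|S\cap R|\ge\alpha_k n^{2}/2^{2k+1}\ge n^{2}/2^{2k+7}$ by (\ref{SQbig}), so the larger of the two has size at least $n^{2}/2^{2k+8}$. In the first case $|S\cap(R-lQ)|\ge n^{2}/2^{2k+8}>2^{k+1}n$ because $n\ge n_k>2^{3k+9}$; in the second case $|S\cap(R+Q)|\ge n^{2}/(l\cdot 2^{2k+8})\ge n^{2}/2^{4k+9}>2^{k+1}n$ because $n\ge n_k>2^{5k+10}$ and $l<2^{2k+1}$. Either way the coset in question is not sparse, hence dense by assumption I. Since $R+Q$ is the only coset occurring on the "$+$" side, if $R+Q$ fails to be dense we obtain $R-lQ$ dense for every $l\in\{1,\dots,2^{2k+1}-1\}$, which is exactly the dichotomy.

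The main obstacle, and the reason the earlier Claim ("$R$ dense $\Rightarrow$ $R+b$ or $R-b$ dense") does not suffice, is controlling \emph{which} of the two cosets one lands in: naively iterating that Claim can simply oscillate between two dense cosets, and a version tracking set sizes loses a factor of $2$ at each step, which is fatal over $2^{2k+1}$ steps. The dichotomy above circumvents this through an asymmetry — the "$+Q$" branch, when it occurs, is genuine progress that can be iterated around $\langle Q\rangle$, whereas the "$-lQ$" branch, should it ever hold for all $l$ simultaneously, immediately yields all $2^{2k+1}$ required dense cosets — so there is no bad iteration left to control; everything after establishing the dichotomy is bookkeeping.
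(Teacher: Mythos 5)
Your proof is correct, and it takes a genuinely different route from the paper's. The paper first locates a boundary inside $\langle Q\rangle$: it walks backward from a dense coset $R$ via $R\mapsto R-Q$ until it hits a sparse coset, calls the last dense one $Q_{1}$ and the adjacent sparse one $Q_{0}$, and then uses the sparsity of $Q_{0}$ (via a bipartite graph from the ``uses some $b_{i}$'' half of $S\cap Q_{1}$ into $S\cap Q_{0}$) to bound that half from above, concluding that the ``uses no $b_{i}$'' half $S^{-}$ is large; translating $S^{-}$ by $b_{1}+\dots+b_{j-1}$ then shows $Q_{1}+(j-1)Q$ is dense for $j=2,\dots,2^{2k+1}$. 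You avoid locating the boundary entirely: for each $l$ you split $S\cap R$ into ``uses all of $b_{1},\dots,b_{l}$'' and ``omits one of them,'' observe that at least one half is large by (\ref{SQbig}), and note that a large first half translates into $R-lQ$ while a large second half feeds, through a bipartite graph with degree $\le l$, into $R+Q$. This yields a per-$l$ dichotomy, and the outer argument is then pure bookkeeping: if $R+Q$ ever fails to be dense, the $R-lQ$ are all dense and you are done at once; otherwise $R\mapsto R+Q$ preserves density and you orbit around $\langle Q\rangle$. The two proofs share the bipartite-degree device and the same numerical inputs ($|S\cap R|\ge \alpha_k n^2/2^{2k+1}$, $l<2^{2k+1}$, $n\ge n_k>2^{5k+15}$), but the logical scaffolding is different. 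Your version dispenses with the case split at the start of the paper's proof (``if all $R-iQ$ are dense we are already done'') and with the need to identify a dense/sparse transition, at the cost of an extra quantifier (the dichotomy is proved for every $l$, and the partition of $S\cap R$ genuinely changes with $l$). Both proofs ultimately produce a run of $\ge 2^{2k+1}$ distinct dense cosets inside the $\langle Q\rangle$-orbit; yours gets there by a cleaner either/or.
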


\begin{proof} We know there is at least one dense coset $R$.  Consider the cosets $R-iQ$ for positive integers $i$, if they are all dense then we have found $\order (Q)\ge 2^{2k+1}$ dense cosets and we are done, hence we may assume there is some non-negative integer $i$ such that $R-iQ$ is dense, but $R-(i+1)Q$ is not.  Set $Q_{0}=R-(i+1)Q$ and $Q_{1}=R-iQ$, and for $j=2,...,2^{2k+1}$ let $Q_{j}=Q_{1}+(j-1)Q$.  We show for each $j=2,...,2^{2k+1}$ that $Q_{j}$ is dense.  Let $b_{1},...,b_{2^{2k+1}}$ be $2^{2k+1}$ elements of $B\cap Q$.  We partition of $S\cap Q_{1}$ into $S^{-}=\Sigma(B\setminus \{b_{1},...,b_{2^{2k+1}}\})$, the set of elements which may be expressed as a sum of elements of $B$ without using any of the elements $b_{1},...,b_{2^{2k+1}}$, and $S^{+}$ the elements of $S\cap Q_{1}$ which can be expressed as a sum of elements of $B$, in such a way that at least one of the elements $b_{1},...,b_{2^{2k+1}}$ is used.  We can relate the size of $S^{+}$ to the size of $S\cap Q_{0}$ and this will allow us to bound the size of $S^{+}$.  Consider the bipartite graph with vertex sets $S^{+}$ and $S\cap Q_{0}$ in which a vertex $x\in S^{+}$ is joined to $y\in S\cap Q_{0}$ if $x-y=b_{i}$ for some $i=1,...,2^{2k+1}$; the vertices of $S^{+}$ all have positive degree, while the degree of vertices in $S\cap Q_{0}$ is certainly at most $2^{2k+1}$, it follows that $|S^{+}|\le 2^{2k+1}|S\cap Q_{0}|$.  However we have that $Q_{0}$ is not dense, since we are assuming all cosets are either sparse or dense we may assume $Q_{0}$ is sparse, so that $|S\cap Q_{0}|\le 2^{k+1}n$, it follows that $|S^{+}|\le 2^{3k+2}n$.  As $Q_{1}$ is dense we have a lower bound on $|S\cap Q_{1}|$ from (\ref{SQbig}), noting that $S\cap Q_{1}=S^{+}\cup S^{-}$ we obtain,\begin{equation*} |S^{-}|\ge |S\cap Q_{1}|-|S^{+}|\ge \frac{\alpha_{k} n^{2}}{2^{2k+1}}-2^{3k+2}n\ge \frac{n^{2}}{2^{2k+7}}-2^{3k+2}n >2^{k+1}n  \end{equation*} the final inequality following from the fact $n\ge n_{k}> 2^{5k+10}$.  Now for each $i=2,...,2^{2k+1}$ we have $S\cap Q_{i}\supset S^{-}+b_{1}+...+b_{i-1}$ and so for each $i=2,...,2^{2k+1}$ we have $|S\cap Q_{i}|>2^{k+1}n$.  This implies that these cosets are not sparse, so they are dense.\end{proof}

\section{Proof of Theorem \ref{asymptoticolson}}\label{poao}

In this section we prove Theorem \ref{asymptoticolson}.  The first half of the section is devoted to proving Lemma \ref{thegoodincreaseolson}, which is the appropriate variant of Lemma \ref{thegoodincrease} for the new setting of $A\subset \mathbb{Z}_{n}^{*}\subset \mathbb{Z}_{n}$.  We then turn in the second half of the section to applying this, and previous lemmas, to prove the required result.  As in Section \ref{prel}, it is important to find lower bounds on the cardinalities of the sets $rC^{*}$, where $C^{*}=C\cup (-C)\cup \{0\}$.

\begin{lemma}\label{rcolson} Let $C\subset H$ be such that $H=\langle \{c\}\rangle$ for each $c\in C$ and $C\cap (-C)=\phi$, then for all positive integers $r$ we have that either $rC^{*}=H$ or, \begin{equation*} |rC^{*}|\ge 2r|C| \end{equation*}\end{lemma}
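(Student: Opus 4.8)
The plan is to run essentially the same coset/Kneser argument used in the proof of Lemma \ref{rc}, but to exploit the much stronger hypothesis that $H = \langle \{c\}\rangle$ for \emph{every} $c \in C$ — i.e. each single element of $C$ generates $H$ — in place of the cardinality bound $|C| \ge 2^{2k+11}$ and the Theorem \ref{dvgms} input. Let $K$ be the stabiliser of $rC^{*}$. If $C \subset K$, then since each $c\in C$ generates $H$ we get $K = H$, hence $rC^{*}=H$ and we are done. So assume $C^{*}$ meets some non-trivial coset of $K$; note $0 \in C^{*}\cap K$ so $C^{*}$ also meets $K$ itself. Let $Q_{1},\dots,Q_{p}$ be the non-trivial cosets of $K$ meeting $C^{*}$. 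Then $rC^{*}$ is the union of the cosets in $r\{K,Q_{1},\dots,Q_{p}\}$, which has trivial stabiliser in $H/K$ by the definition of $K$, so Kneser's theorem gives $|r\{K,Q_{1},\dots,Q_{p}\}| \ge r(p+1)-(r-1) \ge rp$, whence $|rC^{*}| \ge rp|K|$.

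It therefore remains to show $|K| \ge 2|C|/p$. Here is where the new hypothesis does the work: I claim $|K| \ge |Q_{i}|$ for each $i$ (trivial, as cosets of $K$ all have size $|K|$), and that some coset contains at least $2|C|/p$ elements of $C' := C \cup (-C)$. Indeed, since $C\cap(-C)=\phi$ we have $|C'| = 2|C|$, and I want to argue that $C$ — hence $C'$ — has empty intersection with $K$: if some $c \in C$ lay in $K$, then $H = \langle\{c\}\rangle \subset K$, forcing $K=H$ and $rC^{*}=H$, the already-settled case. So we may assume $C' \cap K = \phi$, i.e. $C'$ is entirely distributed among $Q_{1},\dots,Q_{p}$ (and their negatives, but $C'$ is symmetric so this set of cosets is the same). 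By pigeonhole some $Q$ has $|C'\cap Q| \ge |C'|/p = 2|C|/p$, and since $|K| = |Q| \ge |C'\cap Q|$ we get $|K| \ge 2|C|/p$. Combining, $|rC^{*}| \ge rp|K| \ge 2r|C|$, as required.

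I do not expect any serious obstacle: this is a clean specialisation of Lemma \ref{rc} in which the error factor $1 - 2^{-(k+2)}$ disappears because we no longer need the pigeonhole step to tolerate a small intersection $C\cap K$ — under the "every element generates $H$" hypothesis, $C \cap K$ is forced to be empty outright unless $K = H$. The one point to be careful about is the bookkeeping with $C'$ versus $C$ and with the list of cosets $Q_{1},\dots,Q_{p}$ being symmetric under negation (so that $-C$ is also covered by them); this is immediate since $C^{*}$ is symmetric, but it should be stated. A secondary minor point is checking that $0 \notin C$ (so that $C' $ genuinely has size $2|C|$ and avoids $K$ cleanly) — but $C \cap (-C) = \phi$ already rules out $0 \in C$, so nothing extra is needed.
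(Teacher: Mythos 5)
Your proof is correct and follows essentially the same route as the paper's: pass to the quotient $H/K$ where $K$ is the stabiliser of $rC^{*}$, apply Kneser's theorem, observe that the hypothesis forces $C\cap K=\phi$ (else $K=H$), and finish with pigeonhole on $C'=C\cup(-C)$. The only cosmetic difference is that you first dispose of the case $C\subset K$ and then later refine to $C\cap K\neq\phi$; the paper starts directly with the latter dichotomy, which is slightly cleaner.
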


\begin{proof} Let $K$ be the stabiliser of $rC^{*}$.  If there is an element $c$ in $C\cap K$ then $K=H$ and $rC^{*}=H$ and we are done.  So we may assume $C^{*}\cap K=\{0\}$.  Let $Q_{1},...,Q_{p}$ be the non-trivial cosets of $K$ meeting $C^{*}$, it follow that $rC^{*}$ is the set of all elements of the cosets in $r\{K,Q_{1},...,Q_{p}\}$.  By the definition of $K$ the set $r\{K,Q_{1},...,Q_{p}\}$ has trivial stabiliser in $H/K$ so that an application of Kneser's theorem yields, \begin{equation*} |r\{K,Q_{1},...,Q_{p}\}|\ge r(p+1)-(r-1)\ge rp\end{equation*} and so $|rC^{*}|\ge rp|K|$.  So to complete the proof we must show that,\begin{equation*} |K|\ge \frac{2|C|}{p} \end{equation*}  
However we have that the set $C'=C\cup (-C)$ has size $2|C|$ and is contained in $\bigcup_{i=1}^{p} Q_{i}$, so that we may deduce from the pigeon hole principle that for some coset $Q$ we have, \begin{equation*} |C'\cap Q|\ge \frac{|C'|}{p}= \frac{2|C|}{p}\end{equation*} and so noting $|K|=|Q|\ge |C'\cap Q|$ we are done.\end{proof}

We now deduce the key result we need for finding elements $c\in C$ with large $\la(c)$.

\begin{lemma}\label{thegoodincreaseolson} Let $C\subset H$ be such that $H=\langle \{c\} \rangle$ for each $c\in C$ and $C\cap (-C)=\phi$.  Let $\gamma > 2$ and let $S\subset H$ be such that $\defi_{H}(S)\ge \gamma |C|$ then there exists $c\in C$ with $\la (c)\ge (1-4\gamma^{-1})|C|$\end{lemma}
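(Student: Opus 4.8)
The plan is to follow the structure of the proof of Lemma \ref{thegoodincrease}, but using the stronger sumset bound of Lemma \ref{rcolson} in place of Lemma \ref{rc}, and allowing for a general parameter $\gamma$ rather than a power of $2$. As usual, since $\la_S(d)=\la_{H\setminus S}(d)$ for all $d$, I would first replace $S$ by $H\setminus S$ if necessary so that $\defi_H(S)=|S|\le |H|/2$; then $|S|\ge\gamma|C|$. Argue by contradiction: suppose that $\la(c)<(1-4\gamma^{-1})|C|$ for every $c\in C$. Since $\la(0)=0$ and $\la(-c)=\la(c)$, this bound holds for every $c\in C^*=C\cup(-C)\cup\{0\}$, and by subadditivity of $\la$ we get $\la(d)<r(1-4\gamma^{-1})|C|$, hence $\rho(d)>|S|-r(1-4\gamma^{-1})|C|$, for every $d\in rC^*$ and every positive integer $r$. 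Write $\eta=(1-4\gamma^{-1})|C|$ for brevity.

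Next I would set up the integral representation $\sum_{d\in H}\rho(d)=\int_0^{|S|}|D_t|\,dt$ with $D_t=\{d\in H:\rho(d)\ge t\}$, exactly as in Lemma \ref{thegoodincrease}. For $t\le |S|-r\eta$ we have $D_t\supset rC^*$, and Lemma \ref{rcolson} gives $|D_t|\ge\min\{|H|,\,2r|C|\}$; since $|D_t|\le|H|$ trivially and we only need a lower bound, the relevant contribution is $|D_t|\ge 2r|C|$ (if ever $rC^*=H$ then $|D_t|=|H|\ge 2|S|$, which only helps). Partitioning $[0,|S|)$ into the intervals $I_r=(|S|-(r+1)\eta,\,|S|-r\eta]$ for $r=1,\dots,\lfloor|S|/\eta\rfloor$ (together with a bottom piece $t\le|S|-(\lfloor|S|/\eta\rfloor+1)\eta$ handled by the top value of $r$, or simply discarded since we are after a lower bound), each of length $\eta$, I would bound
\begin{equation*}
\sum_{d\in H}\rho(d)\ \ge\ \sum_{r=1}^{\lfloor|S|/\eta\rfloor}\eta\cdot 2r|C|\ \ge\ 2|C|\eta\cdot\frac{(\lfloor|S|/\eta\rfloor)(\lfloor|S|/\eta\rfloor+1)}{2}\ \ge\ |C|\eta\Big(\frac{|S|}{\eta}\Big)\Big(\frac{|S|}{\eta}-1\Big),
\end{equation*}
using $\lfloor x\rfloor(\lfloor x\rfloor+1)\ge x(x-1)$. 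Thus $\sum_d\rho(d)\ge |C||S|(|S|/\eta-1)=|S|^2\cdot\frac{|C|}{\eta}-|C||S|$.

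The final step is to check that, with $\eta=(1-4\gamma^{-1})|C|$ and $|S|\ge\gamma|C|$, this quantity strictly exceeds $|S|^2$, contradicting $\sum_{d\in H}\rho(d)=|S|^2$. We need $|S|^2\big(\tfrac{|C|}{\eta}-1\big)>|C||S|$, i.e. $|S|\big(\tfrac{|C|}{\eta}-1\big)>|C|$, i.e. $|S|\cdot\tfrac{|C|-\eta}{\eta}>|C|$, i.e. $|S|>\tfrac{\eta|C|}{|C|-\eta}$. Now $|C|-\eta=4\gamma^{-1}|C|$ and $\eta<|C|$, so $\tfrac{\eta|C|}{|C|-\eta}<\tfrac{|C|\cdot|C|}{4\gamma^{-1}|C|}=\tfrac{\gamma|C|}{4}\le\gamma|C|\le|S|$, and since $\gamma>2$ forces $\eta>0$ and the inequalities to be strict where needed, we obtain the contradiction. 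I expect the main obstacle to be bookkeeping the floor terms and the boundary interval cleanly enough that the strict inequality survives — but because the slack here is a constant factor ($\gamma|C|/4$ versus $|S|\ge\gamma|C|$) rather than a lower-order term, there is ample room, and no delicate optimisation of the constant $4$ is required.
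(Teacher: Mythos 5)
Your proposal follows the same template as the paper's proof (contradiction, subadditivity of $\la$, $\int_0^{|S|}|D_t|\,dt$, Lemma~\ref{rcolson}), but there is a genuine gap in the step where you set up the sum over $r$.  You take $r$ up to $\lfloor |S|/\eta\rfloor$, where $\eta=(1-4\gamma^{-1})|C|$, and claim $|D_t|\ge 2r|C|$ on each $I_r$, remarking that if $rC^*=H$ then ``$|D_t|=|H|\ge 2|S|$, which only helps.''  That last parenthetical is false for the larger values of $r$ in your range.  If $rC^*=H$ then $|D_t|=|H|$, and $|H|\ge 2|S|$ does \emph{not} imply $|H|\ge 2r|C|$ once $r>|S|/|C|$; since $\eta<|C|$, the indices $r$ with $|S|/|C|<r\le\lfloor|S|/\eta\rfloor$ really occur, and when $|H|$ is close to $2|S|$ one can have $2r|C|>|H|$ for the top few $r$.  (Concretely, with $|C|=1$, $C^*=\{-1,0,1\}$, $S$ an interval of length $|H|/2$: $rC^*=H$ once $2r+1\ge|H|$, while your range of $r$ runs past $|H|/2$.)  For such $r$ the true lower bound is $|H|$, which is \emph{smaller} than $2r|C|$, so the displayed inequality $\sum_{d\in H}\rho(d)\ge\sum_{r=1}^{\lfloor|S|/\eta\rfloor}\eta\cdot 2r|C|$ is an over-count, not a lower bound.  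A second, smaller over-count comes from the last interval $I_{\lfloor|S|/\eta\rfloor}$, whose intersection with $[0,|S|]$ has length $|S|-\lfloor|S|/\eta\rfloor\eta<\eta$.

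The paper avoids both issues by stopping the sum at $r=\lfloor|S|/|C|\rfloor$: for these $r$ one has $2r|C|\le 2|S|\le|H|$, so $|D_t|\ge 2r|C|$ holds whether or not $rC^*=H$, and all the intervals $I_r$ sit inside $[0,|S|]$ (their common lower endpoint is $\ge 4\gamma^{-1}|S|-(1-4\gamma^{-1})|C|>0$).  The remaining bottom range $t\le 3\gamma^{-1}|S|$ is then covered by the single flat bound $|D_t|\ge 2\lfloor|S|/|C|\rfloor|C|\ge 2(1-\gamma^{-1})|S|$, and the arithmetic closes with $(1-\gamma^{-1})(1+2\gamma^{-1})>1$.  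Your version can be repaired -- replace the summand by $\min\{|H|,2r|C|\}$, note that if $r_0C^*=H$ for some $r_0\le\lfloor|S|/\eta\rfloor$ then $|H|(|S|-r_0\eta)\le|S|^2$ forces $r_0\ge |S|/2\eta$, and one checks the resulting quadratic in $r_0\eta/|S|$ is still $>|S|^2$ on $[1/2,1]$; the constant-factor slack you point to is indeed there -- but as written the chain of inequalities is not valid, so the proof has a hole precisely where you flagged a bookkeeping worry.
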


\begin{proof} For all $d\in H$ we have $\la_{S}(d)=\la_{H\setminus S}(d)$, so that exchanging $S$ with $H\setminus S$ if required, we may assume $\defi_{H}(S)=|S|\le |H|/2$.  As in the proofs of Section \ref{prel} we shall proceed by assuming the result fails and from this deduce that $\sum_{d} \rho(d)> |S|^{2}$, a contradiction.  It will be useful to use the fact that $\sum_{d}\rho(d)=\int_{0}^{|S|}|D_{t}| \, dt $ where $D_{t}=\{d\in H:\rho(d)\ge t\}$.\\ \indent Suppose that every element $c\in C$ has $\la(c)<(1-4\gamma^{-1})|C|$, this implies $\la(c)<(1-4\gamma^{-1})|C|$ for all $c\in C^{*}$, since $\la(0)=0$ and $\la(-c)=\la(c)$.  So that by the subadditivity of $\la$ we have for all $r$ and for all $d\in rC^{*}$ that $\la (d)<r(1-4\gamma^{-1})|C|$, and so $\rho(d)>|S|-r(1-4\gamma^{-1})|C|$. 

For each $r=1,...,\lfloor |S|/|C|\rfloor$, we define, \begin{equation*} I_{r}=(|S|-(r+1)(1-4\gamma^{-1})|C|,|S|-r(1-4\gamma^{-1})|C|]\end{equation*} For $t\in I_{r}$ we have $t\le |S|-r(1-4\gamma^{-1})|C|$ and so $D_{t}\supset rC^{*}$, applying Lemma \ref{rcolson} we deduce, $|D_{t}|\ge 2r|C|$. 
\\ \indent Note also that for $t\le |S|-(\lfloor |S|/|C| \rfloor+1) (1-4\gamma^{-1})|C|$ we have $D_{t}\supset \lfloor |S|/|C|\rfloor C^{*}$ and so $|D_{t}|\ge 2 \lfloor |S|/|C|\rfloor |C|$, so using the fact that $\lfloor |S|/|C|\rfloor\ge (|S|-|C|)/|C|\ge(1-\gamma^{-1})|S|/|C|$ we have, \begin{equation*} |D_{t}|\ge 2\Big\lfloor \frac{|S|}{|C|}\Big\rfloor |C|\ge 2(1- \gamma^{-1}) |S| \end{equation*} for all $t\le |S|-(\lfloor |S|/|C| \rfloor +1) (1-4\gamma^{-1})|C|$.    Note that $\lfloor |S|/|C|\rfloor +1\le |S|/|C|+1\le (1+\gamma^{-1})|S|/|C|$ so that the above bound on $|D_{t}|$ hold for all $t$ satisfying $t\le |S|-(1-4\gamma^{-1})(1+\gamma^{-1})|S|$ and so certainly for all $t\le 3\gamma^{-1}|S|$.  We obtain, \begin{equation*}  \sum_{d\in H}\rho(d)= \int_{0}^{|S|}|D_{t}|\, dt\ge \sum_{r=1}^{\lfloor |S|/|C|\rfloor}\int_{t\in I_{r}}|D_{t}|\, dt+\int_{0}^{3\gamma^{-1}|S|}|D_{t}|\, dt \end{equation*} 
and so, \begin{equation*} \sum_{d\in H}\rho(d) \ge \sum_{r=1}^{\lfloor |S|/|C|\rfloor} 2(1-4\gamma^{-1})r|C|^{2}+6\gamma^{-1}(1-\gamma^{-1})|S|^{2}\end{equation*}  
Since $\lfloor |S|/|C|\rfloor \ge (1-\gamma^{-1})|S|/|C|$ we have that $\sum_{r=1}^{\lfloor |S|/|C|\rfloor }r= (\lfloor|S|/|C|\rfloor)(\lfloor|S|/|C|\rfloor+1)/2 \ge (1-\gamma^{-1})|S|^{2}/2|C|^{2}$, and so, \begin{equation*}  \sum_{d\in H}\rho(d)\ge (1-\gamma^{-1})  (1-4\gamma^{-1}) |S|^{2} + 6\gamma^{-1}(1-\gamma^{-1})|S|^{2}\end{equation*}  However this quantity is larger than $|S|^{2}$, a contradiction. \end{proof}

\subsection*{Proof of Theorem \ref{asymptoticolson}}

Since the required result is asymptotic it suffices to prove it for $n\ge n_{0}$, for some $n_{0}$.  Let $n_{0}$ be chosen such that $\sqrt{n}\ge 160n^{1/4}$ and $\log_{3/2}(n)\le n^{1/4}$ for all $n\ge n_{0}$.  For $n\ge n_{0}$ we define $f_{4}(n)=1+15n^{-1/4}$ and $f_{3}(n)=2(f_{4}(n)+n^{-1/2})=2+30n^{-1/4}+2n^{-1/2}$.  It is clear that $f_{3}(n)$ is of the form $2+o(1)$.  To prove Theorem \ref{asymptoticolson} we show that a set $A\subset \mathbb{Z}_{n}^{*}\subset \mathbb{Z}_{n}$ with $|A|\ge f_{3}(n)\sqrt{n}$ must have $\Sigma(A)=\mathbb{Z}_{n}$.  Let us note that if $|A|\ge f_{3}(n)\sqrt{n}=2f_{4}(n)\sqrt{n}+2$ then it is possible to partition $A$ into subsets $A_{1}$ and $A_{2}$, each with cardinality at least $f_{4}(n)\sqrt{n}$ and satisfying $A_{i}\cap (-A_{i})=\phi$ for $i=1,2$.  Our proof will work by showing that, \begin{equation*} |\Sigma(A_{i})|>n/2 \qquad \text{for}\,\, i=1,2\end{equation*} We are then done as $\Sigma(A)\supset \Sigma(A_{1})+\Sigma(A_{2})$ and $S+T\supset \mathbb{Z}_{n}$ whenever $S,T\subset \mathbb{Z}_{n}$ are such that $|S|+|T|>n$.

During the proof we will rely essentially on the results we have proved which give us an element $c\in C$ with large value of $\la(c)$ we recall now the three bounds which we shall need for the sequel.  We write $\la(S,C)$ for the maximum value of $\la_{S}(c)$ over elements $c\in C$.
\begin{eqnarray*} \la(S,C)\ge \frac{|S|}{2} \qquad\text{for}\, \defi_{\mathbb{Z}_{n}}(S)\le |C|/2 \\ \\ \la(S,C)\ge \frac{|C|}{8} \qquad\text{for}\, \defi_{\mathbb{Z}_{n}}(S)\ge |C|/2 \\  \la(S,C)\ge (1-4n^{-1/4})|C| \qquad\text{for}\, \defi_{\mathbb{Z}_{n}}(S)\ge n^{1/4}|C| \end{eqnarray*}  These bounds are taken from Lemmas \ref{ssmall}, \ref{slarge} and \ref{thegoodincreaseolson} respectively.  It is valid to apply these lemmas for sets $C\subset A_{i}$, $i=1,2$, because this certainly implies $C\cap (-C)=\phi$ and that $C\subset\mathbb{Z}_{n}^{*}$ and so $\mathbb{Z}_{n}$ is generated by each element $c\in C$.

Fix $i\in\{1,2\}$, we show that $|\Sigma(A_{i})|>n/2$.  We show this by building up a set $B\subset A_{i}$ with $\Sigma(B)$ large.  We define, \begin{equation*} g(t)=\max_{B\subset A_{i}, \, |B|=t} |\Sigma(B)|\end{equation*} Given $B\subset A_{i}$ of cardinality $t$, and such that $|\Sigma(B)|=g(t)$, we let $S=\Sigma(B)$, and $C=A_{i}\setminus B$, let $\la(S,C)=\max_{c\in C}\la_{S}(c)$.  By considering $B\cup\{c\}$ a set of cardinality $t+1$ we deduce that, \begin{equation*}g(t+1)\ge g(t)+\la(S,C)\end{equation*}  Note that we may assume at all times that $|S|\le n/2$ so that $\defi_{\mathbb{Z}_{n}}(S)=|S|$, for we are immediately done if $|S|>n/2$, so we use the three inequalities above concerning $\la(S,C)$, with $|S|$ in the place of $\defi_{\mathbb{Z}_{n}}(S)$.  

\begin{lemma}\label{asolson} Let $n\ge n_{0}$.  Let $A\subset \mathbb{Z}_{n}^{*}\subset \mathbb{Z}_{n}$ with $A\cap (-A)=\phi$ and $|A|\ge \sqrt{n}+15n^{1/4}$, then $|\Sigma(A)|>n/2$.\end{lemma}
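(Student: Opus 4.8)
The statement to prove is Lemma~\ref{asolson}: if $A\subset\mathbb{Z}_n^*$ with $A\cap(-A)=\phi$ and $|A|\ge\sqrt n+15n^{1/4}$, then $|\Sigma(A)|>n/2$. The plan is the familiar greedy build-up: set $g(t)=\max_{B\subset A,\,|B|=t}|\Sigma(B)|$ and track how much $g$ grows at each step. If at any stage the current $S=\Sigma(B)$ satisfies $|S|>n/2$ we are done, so throughout we may assume $\defi_{\mathbb{Z}_n}(S)=|S|\le n/2$ and apply the three displayed bounds on $\la(S,C)$ (from Lemmas~\ref{ssmall}, \ref{slarge}, \ref{thegoodincreaseolson}) with $C=A\setminus B$. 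The key point is that we can afford to throw away a small number of elements of $A$ (at most $15n^{1/4}$ of them) as ``overhead'' and still have $|C|\ge\sqrt n$ available in the regime where it matters.

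\textbf{The three phases.} First, while $|S|\le|C|/2$, Lemma~\ref{ssmall} gives $\la(S,C)\ge|S|/2$, so $g$ roughly doubles at each step; since we start from $g(1)=1$, after $O(\log n)$ steps we reach $|S|\ge|C|/2$. Here $|C|$ is still close to $|A|$, so $|C|/2$ is of order $\sqrt n$, and the number of steps used is at most $\log_{3/2}(n)\le n^{1/4}$ by the choice of $n_0$ — this is part of the $15n^{1/4}$ overhead. Second, once $|S|\ge|C|/2$ but $|S|<n^{1/4}|C|$, Lemma~\ref{slarge} gives $\la(S,C)\ge|C|/8$; since $|C|$ is still $\ge$ roughly $\sqrt n$, this phase pushes $|S|$ past $n^{1/4}|C|$ within another $O(n^{1/4})$ steps (we need $|S|$ to grow by a factor $n^{1/4}$ while it grows by additive steps of size $\sim\sqrt n/8$, i.e. $O(n^{1/4})$ steps). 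This is another chunk of the overhead. Third, once $\defi_{\mathbb{Z}_n}(S)=|S|\ge n^{1/4}|C|$, Lemma~\ref{thegoodincreaseolson} (with $\gamma=n^{1/4}$) gives $\la(S,C)\ge(1-4n^{-1/4})|C|=(1-4n^{-1/4})(|A|-t)$, which is almost the maximum possible. Summing these near-optimal increments, $g$ grows by essentially $\sum (|A|-t)$, and since we have spent at most about $15n^{1/4}$ steps getting here we still have roughly $\sqrt n$ further steps available; the sum $\sum_{t}(|A|-t)$ over the remaining $t$ is then at least about $\tfrac12(\sqrt n)^2(1-o(1))=\tfrac{n}{2}(1-o(1))$, and one checks the $(1-4n^{-1/4})$ factor and the $15n^{1/4}$ of wasted steps still leave the total strictly above $n/2$.

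\textbf{Bookkeeping and the main obstacle.} Concretely I would set a threshold index, say $t_0=|A|-\lceil\sqrt n\rceil$ (so that for $t\ge t_0$ we have $|C|=|A|-t\le\sqrt n$, hence $|S|\ge n/2\ge n^{1/4}|C|$ is exactly the condition needed to be in phase three once $|S|\ge n/2$ — but before that we need $|S|\ge n^{1/4}|C|$ to hold, which it does once $|S|$ exceeds $n^{3/4}$, comfortably arranged in phases one and two). Then for $t$ from $t_0$ up to $|A|-1$, phase three applies and $g(|A|)\ge\sum_{t=t_0}^{|A|-1}(1-4n^{-1/4})(|A|-t)=(1-4n^{-1/4})\sum_{s=1}^{|A|-t_0}s$. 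With $|A|-t_0\ge\sqrt n+15n^{1/4}-\lceil\sqrt n\rceil\ge 14n^{1/4}$ — wait, that is too weak; the real point is that $|A|-t_0$ must be close to $\sqrt n$, which forces $t_0$ to be reached early, i.e. the phases one and two together consume at most $15n^{1/4}$ steps, leaving $|A|-t_0\ge\sqrt n-o(\sqrt n)$ actual summands. So $g(|A|)\ge(1-4n^{-1/4})\cdot\tfrac12(\sqrt n-O(n^{1/4}))^2>n/2$ for $n\ge n_0$. \textbf{The main obstacle} is precisely this accounting: one must verify that phases one and two together really cost only $O(n^{1/4})$ steps (using $\log_{3/2}n\le n^{1/4}$ for phase one and the $|C|/8$ additive increment with $|C|\gtrsim\sqrt n$ for phase two), and that the slight shrinkage of $|C|=|A|-t$ as $t$ grows, the $(1-4n^{-1/4})$ loss factor, and the lost $15n^{1/4}$ starting steps all fit inside the slack provided by $|A|\ge\sqrt n+15n^{1/4}$ rather than merely $|A|\ge\sqrt n$. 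Everything else is routine summation.
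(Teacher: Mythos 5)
Your proposal is correct and follows essentially the same three-stage greedy argument as the paper: track $g(t)=\max_{|B|=t}|\Sigma(B)|$, use Lemma~\ref{ssmall} to get geometric growth until $|S|\ge|C|/2$ (at most $\log_{3/2}n\le n^{1/4}$ steps), use Lemma~\ref{slarge} to add $\ge\sqrt n/8$ per step until $|S|\ge n^{1/4}|C|$ (at most $\approx 9n^{1/4}$ steps), and then use Lemma~\ref{thegoodincreaseolson} with $\gamma=n^{1/4}$ to collect near-optimal increments $\approx(1-4n^{-1/4})(|A|-t)$ whose sum exceeds $n/2$. The only blemishes are cosmetic (you write $g(1)=1$ where in fact $g(1)=2$, and the first draft of your Stage~3 bookkeeping with $t_0=|A|-\lceil\sqrt n\rceil$ is muddled before you self-correct), but the structure and the budget of $15n^{1/4}$ spare elements match the paper's proof.
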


\begin{proof} It suffices to prove this when the size of $A$ is the integer above $\sqrt{n}+15n^{1/4}$, so we may assume $|A|\le 9\sqrt{n}/8$.  We put lower bounds on $g(t)$ using the bounds on $\la(S,C)$ given above.  We work in three main stages, corresponding to the three different lower bounds we have on $\la(S,C)$.  
\\ \indent \textbf{Stage 1:} Let $t_{1}$ be the least $t$ such that $g(t)\ge (|A|-t)/2$.  We note that $g(1)=2$.  Let $2\le t <t_{1}$, let $B$ be a set of size $t$ with $S=\Sigma(B)$ satisfying $|S|=g(t)$ and let $C=A\setminus B$, we have $|S|\le (|A|-t)/2=|C|/2$.  From our bounds on $\la$ we have, $\la(S,C)\ge |S|/2$ and so, $g(t+1)\ge 3g(t)/2$ so that $g(t)\ge 2(3/2)^{t-1}\ge (3/2)^{t}$.  It follows that, $t_{1}\le \log_{3/2} (|A|)\le \log_{3/2}(n)\le n^{1/4}$. 
\\ \indent \textbf{Stage 2:} Let $t_{2}$ be the least $t$ such that $g(t)\ge 9n^{3/4}/8$.  Let $t_{1}\le t\le \min\{t_{1}+9n^{1/4},t_{2}\}$, let $B$ be a set of size $t$ with $S=\Sigma(B)$ satisfying $|S|=g(t)$, and let $C=A\setminus B$, we have that $|S|\ge |C|/2$ and $|C|=|A|-t \ge \sqrt{n}$, so that $\la(S,C)\ge |C|/8\ge \sqrt{n}/8$ and so, $g(t+1)\ge g(t)+\sqrt{n}/8$.  So that $g(t)\ge g(t_{1})+(t-t_{1})\sqrt{n}/8$ and so certainly $t_{2}\le t_{1}+9n^{1/4}\le 10n^{1/4}$ 
\\ \indent \textbf{Stage 3:} Let $t_{3}$ be the least $t$ such that $g(t)>n/2$.  Let $t_{2}\le t<t_{3}$ and let $B$ be a set of size $t$ with $S=\Sigma(B)$ satisfying $|S|=g(t)$, and let $C=A\setminus B$.  We have that $|S|=g(t)\ge g(t_{2})\ge 9n^{3/4}/8\ge n^{1/4}|C|$ (certainly we have that $|C|\le |A|\le 9\sqrt{n}/8$), so that we have, \begin{equation*} \la(S,C)\ge (1-4n^{-1/4})|C|=(1-4n^{1/4})(|A|-t)\end{equation*} which gives us that, $g(t+1)\ge g(t)+(1-4n^{-1/4})(|A|-t)$ for $t_{2}\le t <t_{3}$, we now use these to calculate $g(t)$ in this range, \begin{equation*} g(t)\ge g(t_{2})+(1-4n^{-1/4})\sum_{t'=t_{2}}^{t}(|A|-t')=g(t_{2})+(1-4n^{-1/4})\sum_{t'=|A|-t}^{|A|-t_{2}}t'\end{equation*} using the bound $\sum_{t'=r}^{s}t'\ge (s^{2}-r^{2})/2=(s+r)(s-r)/2\ge s(s-r)/2$ and the bound $|A|-t_{2}\ge \sqrt{n}+5n^{1/4}$ we obtain, \begin{equation*} g(t)\ge (1-4n^{-1/4})\frac{(|A|-t_{2})(t-t_{2})}{2}\ge (1-4n^{-1/4})\frac{(\sqrt{n}+5n^{1/4})(t-t_{2})}{2}> \frac{\sqrt{n}}{2} (t-t_{2})\end{equation*} it is immediate then that $t_{3}-t_{2}\le \sqrt{n}$, whence $t_{3}\le \sqrt{n}+10n^{1/4}$.  Of course we now have, $|\Sigma(A)|\ge g(t_{3})>n/2$.\end{proof}

A more delicate version of the above proof, in which there are $\log{n}$ stages (an initial stage as Stage 1 above, followed by $\log{n}$ stages in which $g(t)$ doubles) allows one to deduce Lemma \ref{asolson} with the condition on $|A|$ weakened to $|A|\ge \sqrt{n}+10\log_{2}{n}$.  This then implies that any subset $A\subset \mathbb{Z}_{n}^{*}\subset \mathbb{Z}_{n}$ with $|A|\ge 2\sqrt{n}+20\log_{2}{n}+2$ must have $\Sigma(A)=\mathbb{Z}_{n}$.


\begin{thebibliography}{10}\label{bibliography}
\bibitem{DdSH} J. A. Dias da Silva and Y. O. Hamidoune, Cyclic spaces for Grassmann derivatives and additive theory, \emph{Bull. London Math. Soc.} \textbf{26} (1994), no. 2, 140-146.
\bibitem{dvgms} M. De Vos, L. Goddyn, B. Mohar and R. \v S\' amal, A quadratic lower bound for subset sums, \emph{Acta Arith.} \textbf{129} (2007), 187-195.
\bibitem{EH} P. Erd\H os and H. Heilbronn, On the addition of residue classes mod $p$, \emph{Acta Arith.} \textbf{9} (1964), 149-159.
\bibitem{K} M. Kneser, Absch\"atzung der asymptotischen Dichte von Summenmengen, 
\emph{Math. Z.} \textbf{58} (1953), 459-484.
\bibitem{O} J. E. Olson, An addition theorem modulo $p$, \emph{J. Combinatorial Theory} \textbf{5} 
(1968), 45-52. 
\bibitem{Vu} V. H. Vu, Some new results on subset sums, \emph{J. Number Theory} \textbf{124} (2007), 229-233. 
\end{thebibliography}
\end{document}